\documentclass[10pt]{article}

\usepackage[T1]{fontenc}
\usepackage{lmodern}
\usepackage{amsmath,amssymb,amsthm,mathtools,mathrsfs}
\usepackage{geometry}
\usepackage{microtype}
\usepackage{enumitem}
\usepackage{xcolor}
\usepackage{hyperref}
\usepackage{placeins}
\usepackage{graphicx}

\hypersetup{
colorlinks=true,
hypertexnames=false,
linkcolor=black,
citecolor=black,
urlcolor=black,
pdftitle={Autonomous Flows: Exact Finite Interpolation and Uniform Approximation Obstructions},
pdfauthor={Qiaomu Liu, Qi L\"u, Enrique Zuazua},
pdfsubject={Exact finite interpolation and uniform approximation obstructions for autonomous flows},
pdfkeywords={neural ordinary differential equations, universal approximation, exact finite-data interpolation, simultaneous cell controllability, autonomous flows, shallow ReLU networks, uniform neighborhood-routing obstruction}
}
\setlist[enumerate]{leftmargin=2.1em,itemsep=1.5pt,topsep=4pt,parsep=0pt}
\setlist[itemize]{leftmargin=2em,itemsep=1.5pt,topsep=4pt,parsep=0pt}

\AtBeginDocument{%
\setlength{\abovedisplayskip}{3.5pt plus 0.8pt minus 0.8pt}%
\setlength{\belowdisplayskip}{3.5pt plus 0.8pt minus 0.8pt}%
\setlength{\abovedisplayshortskip}{2pt plus 0.5pt minus 0.5pt}%
\setlength{\belowdisplayshortskip}{2.5pt plus 0.5pt minus 0.5pt}%
\setlength{\jot}{1.5pt}%
}
\allowdisplaybreaks
\mathtoolsset{showonlyrefs}

\newtheorem{theorem}{Theorem}[section]
\newtheorem{proposition}[theorem]{Proposition}
\newtheorem{lemma}[theorem]{Lemma}
\newtheorem{corollary}[theorem]{Corollary}
\numberwithin{equation}{section}
\theoremstyle{definition}
\newtheorem{definition}[theorem]{Definition}
\theoremstyle{remark}
\newtheorem{remark}[theorem]{Remark}

\newcommand{\R}{\mathbb R}
\newcommand{\Nzero}{\mathbb N_0}
\newcommand{\NN}{\mathscr N}
\newcommand{\End}{\mathcal E}
\newcommand{\cD}{\mathcal D}
\newcommand{\cV}{\mathcal V}
\newcommand{\cA}{\mathcal A}
\newcommand{\B}{\mathrm B}
\newcommand{\Lip}{\operatorname{Lip}}
\newcommand{\Str}{\operatorname{Str}}
\newcommand{\sep}{\operatorname{sep}}
\newcommand{\diam}{\operatorname{diam}}
\newcommand{\supp}{\operatorname{supp}}
\newcommand{\Diff}{\operatorname{Diff}}
\newcommand{\Id}{\operatorname{Id}}
\newcommand{\Var}{\operatorname{Var}}
\newcommand{\op}{\mathrm{op}}
\newcommand{\TV}{\mathrm{TV}}
\newcommand{\dd}{\,\mathrm d}
\newcommand{\abs}[1]{\left|#1\right|}
\newcommand{\norm}[1]{\left\lVert#1\right\rVert}
\newcommand{\aug}[1]{#1}

\title{\textbf{Autonomous Flows: Exact Finite Interpolation and Uniform Approximation Obstructions}\\[2mm]
}
\author{Qiaomu Liu\thanks{School
	of Mathematics, Sichuan University, Chengdu
	610064, China. (Email: {\tt liuqm@stu.scu.edu.cn}). Qiaomu Liu is supported by the NSF of China under grant 12025105.} \and Qi L\"u\thanks{School
	of Mathematics, Sichuan University, Chengdu
	610064, China. (Email: {\tt lu@scu.edu.cn}). Qi L\"u is supported by the NSF of China under grant 12025105. }  \and Enrique Zuazua\thanks{\aug{Corresponding author. Chair for Dynamics, Control, Machine Learning and Numerics (Alexander von Humboldt Professorship), Department of Mathematics, Friedrich--Alexander--Universit\"at Erlangen--N\"urnberg, Cauerstrasse 11, 91058 Erlangen, Germany; Chair of Computational Mathematics, Fundaci\'on Deusto, Avenida de las Universidades 24, 48007 Bilbao, Spain; Departamento de Matem\'aticas, Universidad Aut\'onoma de Madrid, Calle Francisco Tom\'as y Valiente 7, 28049 Madrid, Spain. (Email: {\tt enrique.zuazua@fau.de}).}}}
\date{}

\begin{document}
\maketitle

\begin{abstract}
In dimension at least two, a class of locally Lipschitz vector fields realizes every finite correspondence between distinct inputs and distinct targets at any prescribed positive time, provided that it is linear, its members generate global flows, and it approximates every smooth vector field of bounded support uniformly on bounded sets. The input and target sets may overlap. The proof constructs a smooth autonomous reference flow and finitely many localized correction fields, then uses uniform stability and Brouwer degree to obtain exact endpoints within the approximating class. The argument uses only uniform approximation of the vector fields; it does not require control of their derivatives. Applied to shallow ReLU vector fields, the result gives exact finite interpolation without time-dependent coefficients or additional state variables, together with bounds on width and normalized coefficient strength. In contrast, no continuous autonomous semiflow can exchange and compress two disjoint balls. Its time maps therefore fail to approximate all continuous maps uniformly on compact sets. The results distinguish exact interpolation at finitely many points from uniform control of neighborhoods.
\end{abstract}

\noindent\textbf{Keywords.} neural ordinary differential equations; universal approximation; exact finite-data interpolation; simultaneous controllability; autonomous flows; shallow ReLU networks; uniform neighborhood-routing obstruction.

\medskip
\noindent\textbf{2020 Mathematics Subject Classification.} 93B05, 34A12, 34H05, 41A30, 68T07.

\section{Introduction}

Finite-data interpolation asks whether a prescribed class of maps can carry each
point of one finite configuration to a prescribed point of another.  Neural ODEs
provide one motivating setting \cite{chen2018,weinan2017}.  A
control-theoretic treatment of classification, approximation, and transport for
neural ODEs \cite{ruizbalet2023} provides an important starting point for the
line of work to which the present paper contributes.  For an autonomous flow, the terminal map has
the form $F=\Phi_V^T$: one time-independent vector field must move all samples
during one common time interval.  This is substantially more rigid than
arbitrary finite-point matching by diffeomorphisms, since a
diffeomorphism carrying one configuration to another need not be the time-$T$
map of an autonomous flow \cite{arangoGomez2002}.

The main result of this paper is that this rigidity does not prevent exact finite
interpolation in dimension $N\ge2$.  If a linear class of locally Lipschitz
autonomous vector fields generates global flows and can approximate smooth compactly supported fields arbitrarily well on bounded sets,
then every correspondence between
finite configurations with pairwise distinct source points and pairwise distinct
target points is realized exactly at every prescribed positive time; see
Theorem~\ref{thm:abstract-exactification}.  This includes points
that remain fixed, assignments in which the target of one point is another input
point, and finite cycles such as $x_1\mapsto x_2\mapsto x_3\mapsto x_1$.  Thus
the source and target configurations need not be disjoint.  Pairwise distinct
targets are necessary because a flow map is injective.

\paragraph{Proof strategy.}
The difficulty is to make all endpoint errors vanish while keeping the vector field in the prescribed class. We proceed in four steps.
\begin{enumerate}
\item \textbf{Construct a smooth reference flow.} Complete the finite correspondence into cycles and fixed points. Realize these by planar rotations and stationary points, then transport the model to the prescribed configuration by a smooth change of coordinates. After time rescaling, this reference flow reaches all targets at time one.
\item \textbf{Construct independent endpoint corrections.} Each moving sample has a short trajectory segment that no other marked sample visits during the unit time interval. Perturbations near these segments, together with bumps near stationary samples, vary every endpoint coordinate independently to first order at the smooth reference field.
\item \textbf{Approximate the correction fields first.} Choose correction fields from the prescribed class close enough to the ideal ones that their endpoint response matrix, evaluated at the smooth reference field, remains invertible. With these fields fixed, choose a small coefficient ball and then approximate the reference field accurately enough to control the endpoint error throughout that ball.
\item \textbf{Remove the endpoint error by degree.} On the boundary of the coefficient ball, the endpoint error differs from the invertible linear response by less than the size of that response. Brouwer degree therefore gives coefficients for which every endpoint error vanishes. Linearity keeps the corrected field in the prescribed class.
\end{enumerate}
The key point is that uniform approximation suffices: we do not approximate derivatives or require differentiability of the endpoint map at a ReLU field. The variational calculation is performed at the smooth reference field, and the final step uses continuity and degree. For the quantitative bounds, we estimate perturbed trajectories in the standard rotation coordinates. This avoids an exponential loss involving the potentially large Lipschitz constant of the transported field. Section~\ref{sec:exactification} gives the construction and estimates.

\paragraph{A ReLU application.}
Finite sums of ReLU ridge fields are globally Lipschitz and can
approximate smooth vector fields arbitrarily well on bounded sets.  Hence one time-independent shallow ReLU field
exactly interpolates every admissible finite dataset in $\mathbb R^N$ for
$N\ge2$, at any prescribed positive time, without temporal switching or state
augmentation.  Theorem~\ref{thm:quantitative-exactification} gives quantitative
bounds on width and time--strength; Proposition~\ref{prop:risk-floor} gives a
complementary bounded-distortion lower bound.  In dimension one, order
preservation rules out universal exact interpolation.

\paragraph{Rigidity at neighborhood scale.}
The flexibility at the point scale has a sharp limitation at the neighborhood
scale.  For a radius fixed before the field is chosen, a universal routing
requirement would ask that each source ball be sent into a smaller ball around
its assigned target.  Theorem~\ref{thm:two-cell} shows that no continuous
autonomous semiflow can exchange and compress two disjoint balls in this way:
the semigroup identity and Brouwer's fixed-point theorem force a contradiction.
The obstruction is structural and does not exclude data-dependent neighborhoods
obtained after a particular interpolating flow has been selected.
The distinction between pointwise and neighborhood control is a
central motivation of recent work \cite{alvarez2026}: simultaneous control of neighborhoods or cells, rather than
pointwise interpolation alone, is the stronger property relevant to universal
approximation and generalization estimates.  Corollary~\ref{cor:uniform-uap-obstruction}
consequently rules out uniform density of autonomous time maps in the continuous
maps on compact sets. Thus exact interpolation of finitely many
points does not supply the neighborhood control required for those stronger
consequences; indeed, autonomy obstructs every universal fixed-radius version
of it.  This concerns the uniform topology and the full class
$C(\mathcal K;\mathbb R^N)$, not weaker topologies or almost-everywhere approximation. Corollary~\ref{cor:uniform-uap-obstruction} does not characterize which invertible targets are approximable.

\paragraph{Relation to previous work.}
Finite-ensemble controllability and approximation on mapping spaces have been
studied through Lie-algebraic, time-dependent, and switched mechanisms
\cite{agrachevLetrouit2026,agrachev2020,agrachev2022,grong2026,gugat2026}.
These mechanisms do not face the same autonomous semigroup constraint.
Earlier autonomous neural-ODE interpolation results impose additional geometric
or architectural conditions \cite{alvarez2024}.  Recent work
\cite{alvarez2026} formulates the remaining gap that motivates the present study: it develops pointwise
interpolation and simultaneous neighborhood control in its semi-autonomous
setting, while the corresponding general exact-interpolation question for a
single-layer autonomous system remains open there.  By contrast, the criterion
proved here applies to every linear, globally well-posed,
class with this approximation property; shallow ReLU fields are its principal
application.  Approximation of diffeomorphisms is related but distinct:
geometric finite-point relocation supplies diffeomorphisms moving finite
configurations in dimension at least two \cite{banyaga1997,michorVizman1994},
whereas autonomous embeddability remains a separate issue
\cite{arangoGomez2002}.  Further comparisons are given in
Section~\ref{sec:learning-consequences}.

The shallow autonomous ReLU model used quantitatively is
\[
\dot z(t)=V(z(t)),\qquad
V(x)=\sum_{j=1}^p w_j(a_j^\top x+b_j)_+,
\qquad (s)_+:=\max\{s,0\}.
\]
Positive homogeneity and coefficient-controlled approximation enter only in the
quantitative estimates.  Section~\ref{sec:setting-main} states the main results;
Section~\ref{sec:exactification} proves the endpoint-correction principle;
Section~\ref{sec:quantitative} derives the ReLU bounds; and
Section~\ref{sec:cell-limitations} proves the obstructions.  The appendices
contain the geometric, approximation, bridge, and closure estimates.

Figure~\ref{fig:dimension-routing} summarizes the dimension- and scale-dependent mechanisms behind these results.

\begin{figure}[!ht]
\centering
\includegraphics[width=\textwidth]{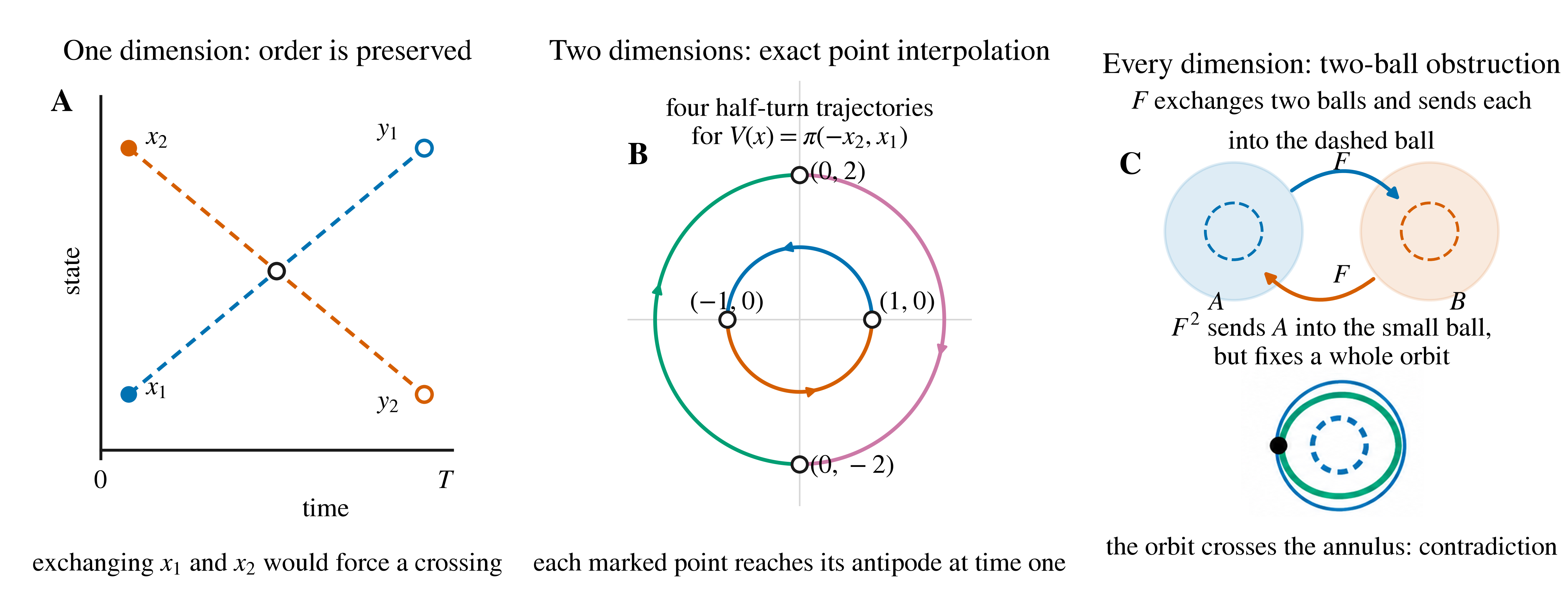}
\caption{Exact interpolation and the obstruction to routing balls. (A) A one-dimensional flow preserves order, so it cannot exchange two ordered inputs. (B) The field $V(x)=\pi(-x_2,x_1)$ rotates the inputs $(\pm1,0)$ and $(0,\pm2)$ to their antipodes at time one; the four marked trajectories lie on two circles. (C) If $F=\Psi^T$ exchanged and compressed two disjoint closed balls, Brouwer's theorem would give a fixed point of $F^2$ in the first ball. Its semiflow trajectory would cross the surrounding annulus, where $F^2$ cannot both fix the crossing point and send it into the smaller target ball. Panel B shows the explicit example; panels A and C illustrate the arguments.}
\label{fig:dimension-routing}
\end{figure}

\section{Framework and principal results}
\label{sec:setting-main}

Several architecture classes organize the comparison. An \emph{autonomous} model uses one field $V(x)$, so its terminal map repeats under iteration. A \emph{semi-autonomous} model has restricted explicit time dependence, for example affine time dependence in thresholds; equivalently, some such schedules can be encoded autonomously after adjoining a clock state. A \emph{fully nonautonomous} model uses $V(t,x)$ and may select distinct temporal stages, although this freedom alone does not imply simultaneous cell controllability. An \emph{augmented-state autonomous} model evolves in a larger state space and can evade restrictions tied to the original-state topology; augmented neural ODEs were introduced to alleviate original-state representational restrictions \cite{dupont2019}. Controlled residual dynamics and continuous normalizing flows supply discrete-depth and invertible-flow comparisons, respectively \cite{behrmann2019,grathwohl2019}. The distinctions relevant here are exact finite interpolation, fixed-radius neighborhood routing, invertibility, and the availability of temporal scheduling.

In the autonomous shallow network, the trainable ``controls'' are fixed coefficients of one time-independent vector field; once chosen, they act throughout the entire interval. Nonautonomous control architectures can instead select different operations at different temporal stages. This distinction explains both sides of the paper: endpoint sensitivities permit simultaneous correction of finitely many terminal constraints, while autonomy forces the semigroup identity $\Phi^{2T}=\Phi^T\circ\Phi^T$ on neighborhood behavior.

\subsection{Framework: model and expressivity notions}

\subsubsection{Autonomous ReLU flows and finite data}

Let \(\Nzero:=\{0,1,2,\ldots\}\), and set \(\log_+(r):=\max\{\log r,0\}\) for \(r>0\), with \(\log_+(0):=0\). 
For \(d\ge1\), \(x\in\R^d\), and \(r>0\), write
\[
\B_r^d(x):=\{z\in\R^d:|z-x|<r\},
\qquad
\B_r^d:=\B_r^d(0).
\]
When \(d=N\), we suppress the superscript and write
\(\B_r(x)\) and \(\B_r\). Thus \(\B_r\) is reserved for a ball, whereas the italic symbol \(B\) below denotes the linear part of an affine normalization.  For \(N\ge1\) and \(p\in\Nzero\), define
\[
\NN_{N,p}:=
\left\{
x\longmapsto\sum_{j=1}^p w_j(a_j^\top x+b_j)_+:
w_j,a_j\in\R^N,\ b_j\in\R
\right\},
\qquad
\NN_N:=\bigcup_{p\in\Nzero}\NN_{N,p}.
\]
For a map \(u\) on a set \(A\), let
\(\norm{u}_{C(A)}:=\sup_{x\in A}|u(x)|\), and let \(\Lip_A(u)\) denote its
least Lipschitz constant on \(A\). For an integer \(s\ge0\),
\(\norm{u}_{C^s(A)}\) is the maximum of the suprema of all derivatives of order
at most \(s\), when they exist, while \(\norm{D^q u(x)}_{\op}\) denotes the
operator norm of the \(q\)-linear derivative. Finally,
\(\Diff_c^\infty(\Omega)\) denotes the group of smooth diffeomorphisms of
\(\Omega\) that agree with \(\Id\) outside a compact subset of \(\Omega\).

The empty sum is the zero field.  Each \(V\in\NN_N\) is globally Lipschitz and has at most linear growth.  It therefore generates a unique global flow \(\Phi_V^t\), \(t\in\R\).

For a parameter list \(\theta=((w_j,a_j,b_j))_{j=1}^p\), write
\[
V_\theta(x):=\sum_{j=1}^p w_j(a_j^\top x+b_j)_+,
\qquad p(\theta):=p.
\]
Thus \(V_\theta\in\NN_N\); different parameter lists may represent the same field.

Let \(M\in\mathbb N\), set \(m:=NM\), and let \(\cD=\{(x_i,y_i)\}_{i=1}^M\subset\R^N\times\R^N\). We identify \((\R^N)^M\) with \(\R^m\) and equip it with the Euclidean norm. The dataset is \emph{admissible} if the inputs \(x_i\) are pairwise distinct and the targets \(y_i\) are pairwise distinct.  Fixed constraints \(x_i=y_i\) and cross-relations \(y_i=x_j\) are allowed.  For pairwise distinct inputs, pairwise distinct targets are necessary because every flow map is injective; repeated identical constraints may be discarded.
Set \(Y:=(y_1,\ldots,y_M)\in\R^m\).
\begin{remark}[Overlapping input and target configurations]
Admissibility does not require the union of inputs and targets to contain $2M$ points. Stationary constraints $x_i=y_i$, cross-relations $y_i=x_j$, finite cycles, and partial overlap of the input and target sets are all permitted. The synchronized reference construction forms the directed correspondence on the $K$ distinct vertices, completes its paths into disjoint cycles when necessary, and assigns stationary vertices to fixed components. Thus repetitions in the union create shared vertices, not conflicting endpoint conditions. The construction is carried out in Sections~\ref{sec:graph-completion}--\ref{sec:smooth-reference-module}, and the endpoint directions for moving and stationary samples are constructed in Section~\ref{sec:endpoint-frame-module}; the regularity and derivative estimates are deferred to Appendix~\ref{app:finite-transitivity}.
\end{remark}
\begin{remark}\label{rem:one-dimensional-obstruction}
The restriction \(N\ge2\) in the exact interpolation theorem is structural, rather than a technical artifact of the proof.  In one dimension, every flow map preserves order: if \(x<x'\), then \(\Phi_V^t(x)<\Phi_V^t(x')\) for every \(t\in\R\). Indeed, two distinct trajectories cannot meet by uniqueness of solutions. Hence the admissible assignment \(x_1=0,\ x_2=1,\ y_1=1,\ y_2=0\) cannot be realized by any one-dimensional flow.  Thus universal exact interpolation already fails for \(N=1\).  In dimensions \(N\ge2\), trajectories have transverse room to route around one another; this geometric freedom is what makes arbitrary finite assignments possible.
\end{remark}

\subsubsection{Point interpolation and uniform neighborhood routing}

Fix \(T>0\). For any locally Lipschitz vector field \(V\) generating a global flow, define the terminal, or endpoint, map---the list of all sample locations after time $T$---by \(\End(V):=(\Phi_V^T(x_1),\ldots,\Phi_V^T(x_M))\in\R^m\). For an admissible dataset, let
\[
p_{\min}(\cD,T):=
\inf\left\{
p(\theta):
\Phi_{V_\theta}^T(x_i)=y_i,\ i=1,\ldots,M
\right\},
\]
with the convention that the infimum is \(+\infty\) if the constraint set is empty.

A \emph{continuous autonomous semiflow}---a continuous forward-time evolution whose successive time intervals compose--- is a family \((\Psi^t)_{t\ge0}\) of self-maps such that \((t,x)\mapsto\Psi^t(x)\) is continuous on \([0,\infty)\times\R^N\), \(\Psi^0=\Id\), and \(\Psi^{t+s}=\Psi^t\circ\Psi^s\) for \(s,t\ge0\).

\begin{definition}[Universal point-to-neighborhood property]
Let \(\mathscr F\) be a class of autonomous vector fields with global flows.
\begin{enumerate}[label=(\roman*)]
\item The class \(\mathscr F\) has property \((\mathsf P_0)\) if every admissible finite dataset is interpolated exactly at time \(T\) by a field in \(\mathscr F\).
\item For \(\delta>0\), the class \(\mathscr F\) has property \((\mathsf P_\delta)\) if, for every finite family of centers \(x_1,\ldots,x_M\) for which the balls \(\overline\B_\delta(x_i)\) are pairwise disjoint, every family of pairwise distinct target centers \(y_1,\ldots,y_M\), and every \(0<\varepsilon<\delta\), there exists \(V\in\mathscr F\) such that
\[
\Phi_V^T(\overline\B_\delta(x_i))
\subset\B_\varepsilon(y_i),
\qquad i=1,\ldots,M.
\]
\end{enumerate}
\end{definition}

The input radius in \((\mathsf P_\delta)\) is fixed before the vector field is chosen.  This order of quantifiers distinguishes universal fixed-radius neighborhood routing from the data-dependent neighborhoods supplied by continuity after a particular interpolating field has been fixed.

We use the notion of \emph{simultaneous cell controllability} (SCC) introduced in~\cite{alvarez2026}: a control architecture has SCC if it can send every finite family of pairwise disjoint compact convex cells into arbitrarily small balls about prescribed targets. SCC includes the two-ball routing configurations used below. For an architecture closed under positive time rescaling, it also implies \((\mathsf P_\delta)\) at each prescribed terminal time.

\subsection{Principal structural results}
\label{sec:results}

The paper has two principal structural statements. Theorem~\ref{thm:abstract-exactification} gives the positive approximation-to-interpolation principle, and Theorem~\ref{thm:two-cell} gives the complementary autonomous semigroup obstruction. Their shallow-ReLU and approximation-theoretic consequences are stated alongside them; the quantitative ReLU refinement is separated below from the two structural statements.

\subsubsection{From approximation to exact interpolation}

\begin{theorem}[Approximation on bounded sets implies exact finite interpolation] 
\label{thm:abstract-exactification}
Let $N\ge2$, and let
$\mathscr F\subset C_{\mathrm{loc}}^{0,1}(\R^N;\R^N)$
be a linear space of vector fields such that every $V\in\mathscr F$ generates a global flow.  Assume that members of $\mathscr F$ can approximate every compactly supported smooth vector field arbitrarily well on every bounded set: for every $u\in C_c^\infty(\R^N;\R^N)$, $R>0$, and $\varepsilon>0$, there exists $V\in\mathscr F$ such that
\[
\norm{V-u}_{C(\overline\B_R)}<\varepsilon.
\]
Then, for every $T>0$ and every admissible finite correspondence $\{(x_i,y_i)\}_{i=1}^M$, there exists $V\in\mathscr F$ such that $\Phi_V^T(x_i)=y_i$ for all $i$.
\end{theorem}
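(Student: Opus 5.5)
The plan has four steps, following the strategy outlined in the introduction.

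\textbf{Reference flow.} By time rescaling ($V\mapsto TV$ stays in $\mathscr F$ by linearity, and $\Phi_{TV}^1=\Phi_V^T$) it suffices to treat $T=1$. Consider the directed graph on the $K$ distinct points of $\{x_i\}\cup\{y_i\}$ with an edge $x_i\to y_i$ for each $i$. Admissibility gives in- and out-degree at most one, so the graph is a disjoint union of paths, cycles and fixed vertices. Append edges closing each path into a cycle. Place the vertices of a cycle of length $L$ at the points $e^{2\pi \mathrm{i} k/L}$ of a circle in a coordinate $2$-plane, using concentric circles of distinct radii for distinct cycles. Place the fixed vertices off these circles. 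The field $u_0(x)=2\pi\chi(x)\,(-x_2,x_1,0,\dots)/L$ does not quite work, because cycles of different lengths would need different angular speeds. So give each cycle its own radius $r_c$ and use an angular speed $\omega(|x'|)$ equal to $2\pi/L_c$ near $r_c$, where $x'=(x_1,x_2)$, cut off to vanish near the fixed points. This is where $N\ge2$ is used. Then transport the model configuration to the actual one with a compactly supported diffeomorphism $\psi\in\Diff_c^\infty(\R^N)$, using finite-point transitivity in $N\ge2$ (the cited Banyaga/Michor--Vizman results). The result is a smooth compactly supported $u_0=\psi_*\tilde u$ with $\Phi_{u_0}^1(x_i)=y_i$ exactly.

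\textbf{Endpoint corrections.} For a moving sample $x_i$, choose a short arc of its trajectory, at times $t\in(t_i,t_i+\eta)$, that no other marked trajectory visits on $[0,1]$; this is possible because the circles are disjoint and each orbit passes each arc once. Take bumps $g_{i,k}$ supported in a small tube around this arc, with $N$ directions $k=1,\dots,N$. By the variational formula
\[
\partial_s\,\Phi_{u_0+sg}^1(x_i)\big|_{s=0}=\int_0^1 D\Phi_{u_0}^{1-t}\bigl(\Phi_{u_0}^t(x_i)\bigr)\,g\bigl(\Phi_{u_0}^t(x_i)\bigr)\,\mathrm dt ,
\]
these bumps span $\R^N$ at the $i$-th endpoint and vanish at the other endpoints. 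For a stationary sample, bumps $g_{i,k}=e_k\varphi(\cdot-x_i)$ give derivative $\int_0^1 D\Phi^{1-t}e_k\,\mathrm dt$, which is invertible for short bump support. So the $m\times m$ response matrix $A$ of $s\mapsto\End(u_0+\sum s_\ell g_\ell)$ at $s=0$ is block diagonal and invertible.

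\textbf{Approximation and degree.} Next pick $h_\ell\in\mathscr F$ uniformly close to $g_\ell$ on a large ball. The derivative of $\End$ at the smooth $u_0$ in the direction of a merely Lipschitz $h_\ell$ still exists and is given by the same formula, since $u_0$ is $C^1$. So the response matrix $A_h$ remains invertible, with $|A_h s|\ge c|s|$. Fix $\rho>0$ so small that the Taylor remainder of $G_0(s):=\End(u_0+\sum s_\ell h_\ell)-Y$ satisfies $|G_0(s)-A_hs|\le c\rho/3$ on $|s|=\rho$; this needs only $C^1$ regularity of $u_0$ along the linear family. Finally choose $V_0\in\mathscr F$ so close to $u_0$ on a large ball that $|\End(V_0+\sum s_\ell h_\ell)-\End(u_0+\sum s_\ell h_\ell)|<c\rho/3$ for all $|s|\le\rho$. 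This follows from Gronwall, using the Lipschitz constant of the smooth field $u_0+\sum s_\ell g_\ell$ together with uniform closeness, with trajectories confined to a fixed ball for $t\in[0,1]$. Then $G(s):=\End(V_0+\sum s_\ell h_\ell)-Y$ is continuous, by continuous dependence for locally Lipschitz fields, and satisfies $|G(s)-A_hs|<c\rho\le|A_hs|$ on $\partial\B_\rho^m$. The linear homotopy $(1-\lambda)A_h s+\lambda G(s)$ therefore has no zeros on the boundary, so $\deg(G,\B^m_\rho,0)=\operatorname{sign}\det A_h\ne0$, and $G$ has a zero $s^*$. The field $V=V_0+\sum s^*_\ell h_\ell\in\mathscr F$ then interpolates exactly.

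\textbf{Main obstacle.} I expect the main difficulty to be the Gronwall step. The perturbation fields $h_\ell$ and $V_0-u_0$ are only uniformly small, not small in Lipschitz norm, so the comparison must always be against the smooth field, whose Lipschitz constant is fixed. The argument must also keep every trajectory inside the ball where approximation holds. Ordering the choices as correction fields, then $\rho$, then $V_0$ resolves this, because the smooth comparison field $u_0+\sum s_\ell g_\ell$ has a uniform Lipschitz bound over $|s|\le\rho$. The $h_\ell$-versus-$g_\ell$ error is then also absorbed by choosing the $h_\ell$ sufficiently accurate beforehand, relative to $\rho$.
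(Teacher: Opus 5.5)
Your proposal is correct and follows the paper's proof essentially step for step (cycle completion and a rotation model transported by a compactly supported diffeomorphism, localized corrections with invertible response at the smooth reference field, then approximating the corrections in $\mathscr F$, fixing the coefficient ball, approximating the base field, and closing with Brouwer degree), with only cosmetic differences in the model flow and the correction frame. One tidy-up: in the Gronwall step you can, as the paper does, simply use the finite Lipschitz constant of $u_0+\sum_\ell s_\ell h_\ell$ on the ball, since the fixed $h_\ell$ are locally Lipschitz and the uncontrolled Lipschitz constant of $V_0$ is never used; choosing the $h_\ell$ accurate ``relative to $\rho$'' is therefore unnecessary, and taken literally it would be circular in your ordering because $\rho$ depends on the $h_\ell$.
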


The proof is given in Section~\ref{sec:exactification}; the regularity and quantitative estimates needed by the ReLU application are recorded in the appendices.

\paragraph{Shallow-ReLU consequence.}

\begin{corollary}[Exact finite interpolation by shallow autonomous ReLU flows]
\label{thm:qualitative-interpolation}
For every $N\ge2$, every admissible finite correspondence in $\mathbb R^N$, and every prescribed time $T>0$, there exists one time-independent field
\[
V(x)=\sum_{j=1}^p w_j(a_j^\top x+b_j)_+
\]
whose time-$T$ flow satisfies $\Phi_V^T(x_i)=y_i$ for all $i$.  This includes stationary constraints, cross-relations, finite cycles, and arbitrary overlap between the input and target configurations.
\end{corollary}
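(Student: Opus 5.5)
The plan is to apply Theorem~\ref{thm:abstract-exactification} with $\mathscr F:=\NN_N$, so the work is to check its three hypotheses for shallow ReLU fields.

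\emph{Linearity.} $\NN_N$ is a linear space. Scalar multiples are handled by rescaling the output weights $w_j$. The sum of two fields is represented by concatenating their parameter lists, which gives widths $p+p'$. The zero field is the empty sum.

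\emph{Global flows.} Each $V\in\NN_N$ is globally Lipschitz with constant at most $\sum_j|w_j||a_j|$, since $s\mapsto(s)_+$ is $1$-Lipschitz. So $V$ is locally Lipschitz with at most linear growth. Picard--Lindel\"of together with Gr\"onwall then gives a unique global flow $\Phi_V^t$, as already noted in the framework section.

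\emph{Uniform approximation on balls.} This is the only substantive step. Fix $u\in C_c^\infty(\R^N;\R^N)$, $R>0$ and $\varepsilon>0$. It suffices to approximate each scalar component $u_k$ on $\overline\B_R$ to accuracy $\varepsilon/\sqrt N$ by a scalar function $\sum_j c_j(a_j^\top x+b_j)_+$, and then set $w_j:=c_je_k$ and sum over $k$.

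For the scalar problem I would invoke the classical universal approximation theorem for one-hidden-layer networks with a continuous non-polynomial activation (Leshno--Lin--Pinkus--Schocken), applied to ReLU on the compact set $\overline\B_R$. That theorem gives approximants of the form $\sum_j c_j(a_j^\top x+b_j)_++c_0$. The additive constant can be absorbed, since $c_0=c_0\bigl[(s+M)_+-(s-M)_+\bigr]/(2M)$ for $|s|\le M$ with any fixed direction $s=e_1^\top x$ and $M>R$.

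If a self-contained route is preferred, I would instead proceed in two steps. First, approximate $u_k$ uniformly on $\overline\B_R$ by finite sums of ridge functions $g(a^\top x)$, using for instance density of trigonometric polynomials, since $e^{i\xi\cdot x}$ is a ridge function. Second, replace each one-dimensional continuous $g$ on a bounded interval by its piecewise-linear interpolant. Such an interpolant is an affine function plus a finite sum of hinges $(s-t_l)_+$, and the affine part is itself a difference of two ReLUs on the bounded range of $a^\top x$.

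With the three hypotheses verified, Theorem~\ref{thm:abstract-exactification} yields, for every $T>0$ and every admissible correspondence, a field $V\in\NN_N$ with $\Phi_V^T(x_i)=y_i$. The list of permitted overlaps (stationary constraints, cross-relations, cycles, shared vertices) is inherited directly, because the theorem only requires admissibility. No step is a real obstacle. The only point needing care is that the approximation must hold in sup norm on bounded sets, not just in some weaker sense, and the cited density theorem gives exactly that.
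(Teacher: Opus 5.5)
Your proposal is correct and matches the paper's proof: check that $\NN_N$ is linear, that its members are globally Lipschitz (hence generate global flows), and that the classical nonpolynomial-activation universal approximation theorem, applied coordinatewise, gives uniform approximation on compact balls; then invoke Theorem~\ref{thm:abstract-exactification}. One harmless slip: on $|s|\le M$ one has $(s+M)_+-(s-M)_+=s+M$, not $2M$, so your constant-absorption identity is wrong as written. Use $(s+M)_++(M-s)_+=2M$ instead, or simply $(0^\top x+1)_+\equiv1$, since $a_j=0$ is allowed in $\NN_N$.
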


This follows from Theorem~\ref{thm:abstract-exactification}, as detailed in Section~\ref{sec:relu-qualitative-application}.

\begin{remark}[Reusable criterion beyond ReLU]
\label{rem:beyond-relu}
Theorem~\ref{thm:abstract-exactification} is not specific to ReLU: any linear dictionary class with the same approximation-on-bounded-sets property and global well-posedness assumptions inherits exact finite interpolation.  In particular, globally Lipschitz nonpolynomial ridge activations, such as the logistic sigmoid or $\tanh$, provide natural examples, since their finite ridge expansions are globally Lipschitz and classical universal approximation theorems provide the required approximation on bounded sets \cite{cybenko1989,hornik1991,pinkus1999}.  Thus the qualitative exact-interpolation result extends to these activations.  The quantitative width and normalized-strength estimates below, however, remain ReLU-specific because they use coefficient-controlled approximation and positive homogeneity.  
\end{remark}

\subsubsection{Autonomous semigroup obstruction}

\begin{theorem}[Semigroup obstruction to two-cell routing]
\label{thm:two-cell}
The following statement contains no neural-network hypothesis.
Let \(N\ge1\), \(T>0\), and \(\delta>0\).  Suppose \(\abs{a-b}>2\delta\) and \(0<\varepsilon<\delta\).  No continuous autonomous semiflow \((\Psi^t)_{t\ge0}\) on \(\R^N\) can satisfy
\[
\Psi^T(\overline\B_\delta(a))\subset\B_\varepsilon(b),
\qquad
\Psi^T(\overline\B_\delta(b))\subset\B_\varepsilon(a).
\]
\end{theorem}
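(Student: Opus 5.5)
The plan is to argue by contradiction, using only Brouwer's fixed-point theorem together with the semigroup identity, as sketched in panel (C) of Figure~\ref{fig:dimension-routing}. Suppose such a semiflow exists and set \(F:=\Psi^T\). Since \(\varepsilon<\delta\), the hypotheses give \(F(\overline\B_\delta(a))\subset\B_\varepsilon(b)\subset\overline\B_\delta(b)\). Applying \(F\) again then gives
\[
F^2(\overline\B_\delta(a))\subset \B_\varepsilon(a)\subset\overline\B_\delta(a).
\]
Thus \(F^2=\Psi^{2T}\) is a continuous self-map of the closed ball \(\overline\B_\delta(a)\). Brouwer's theorem supplies a point \(p\in\overline\B_\delta(a)\) with \(\Psi^{2T}(p)=p\). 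Because \(p=F^2(p)\) lies in the image of \(F^2\), we also have \(|p-a|<\varepsilon\).

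Next I will follow the orbit of \(p\) over the time interval \([0,T]\). At time \(T\) it sits at \(\Psi^T(p)\in\B_\varepsilon(b)\). Since \(|a-b|>2\delta\), this gives
\[
|\Psi^T(p)-a|>2\delta-\varepsilon>\varepsilon.
\]
The function \(t\mapsto|\Psi^t(p)-a|\) is continuous by joint continuity of the semiflow. It is \(<\varepsilon\) at \(t=0\) and \(>\varepsilon\) at \(t=T\). By the intermediate value theorem there is therefore \(s\in(0,T)\) for which \(q:=\Psi^s(p)\) satisfies \(|q-a|=\varepsilon\). In particular \(q\in\overline\B_\delta(a)\), and \(q\) lies on the sphere of radius \(\varepsilon\) in the annulus between the two balls.

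The key step, and the only place where autonomy enters, is to show that \(q\) is also fixed by \(F^2\). The semigroup law makes \(\Psi^{2T}\) and \(\Psi^s\) commute, since both compositions equal \(\Psi^{2T+s}\). Hence
\[
F^2(q)=\Psi^{2T}(\Psi^s(p))=\Psi^s(\Psi^{2T}(p))=\Psi^s(p)=q .
\]
On the other hand, \(q\in\overline\B_\delta(a)\) forces \(F^2(q)\in\B_\varepsilon(a)\), that is, \(|q-a|<\varepsilon\). This contradicts \(|q-a|=\varepsilon\).

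No step here looks technically hard. The only points needing care are the following. First, \(p\) must land strictly inside \(\B_\varepsilon(a)\), which follows because \(p\) lies in the image of \(F^2\). Second, the exit estimate uses \(|a-b|>2\delta\) and \(\varepsilon<\delta\). Third, the crossing time \(s\) is produced by continuity in \(t\), so differentiability is never used. The argument does not depend on \(N\), which is why the theorem holds for all \(N\ge1\).
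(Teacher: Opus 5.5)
Your proof is correct and follows the paper's argument. Both use Brouwer's theorem for $\Psi^{2T}$ on a closed ball about $a$, commute $\Psi^{2T}$ with $\Psi^s$ to make every point of the orbit fixed, and pick a crossing point that violates the strict return inclusion. The only differences are cosmetic and both are valid: you apply Brouwer directly on $\overline\B_\delta(a)$ rather than on an intermediate ball $\overline\B_\rho(a)$, and you cross at radius $\varepsilon$ rather than $(\varepsilon+\delta)/2$.
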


\begin{corollary}[Sharp point-to-neighborhood threshold]
\label{cor:sharp-threshold}
Let \(N\ge2\), \(T>0\), and \(\delta\ge0\).  Then
\[
\NN_N\text{ has property }(\mathsf P_\delta)
\quad\Longleftrightarrow\quad
\delta=0.
\]
In particular, no original-state autonomous architecture whose admissible evolutions are continuous semiflows can satisfy SCC.
\end{corollary}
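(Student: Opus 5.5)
The statement immediately above is Corollary~\ref{cor:sharp-threshold}, and I would prove its two directions separately.

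\textbf{The direction $\delta=0$.} Here property $(\mathsf P_0)$ is just exact interpolation of every admissible finite dataset at time $T$. This is precisely Corollary~\ref{thm:qualitative-interpolation}, which follows from Theorem~\ref{thm:abstract-exactification} and applies because $N\ge2$. One reading point should be settled first: the notation $(\mathsf P_\delta)$ is introduced only for $\delta>0$, so ``$(\mathsf P_0)$'' refers to item (i) of the definition. With that reading, this direction needs nothing further.

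\textbf{The direction $\delta>0$.} Fix $\delta>0$ and suppose, for contradiction, that $\NN_N$ has $(\mathsf P_\delta)$.
\begin{enumerate}
\item Choose centers $a,b$ with $|a-b|>2\delta$. The closed balls $\overline\B_\delta(a)$ and $\overline\B_\delta(b)$ are then disjoint, so the configuration $M=2$ is allowed.
\item Take target centers $y_1=b$ and $y_2=a$. These are distinct, as the definition requires.
\item Take any $\varepsilon\in(0,\delta)$, for instance $\varepsilon=\delta/2$.
\end{enumerate}
Property $(\mathsf P_\delta)$ then gives some $V\in\NN_N$ with
\[
\Phi_V^T(\overline\B_\delta(a))\subset\B_\varepsilon(b),
\qquad
\Phi_V^T(\overline\B_\delta(b))\subset\B_\varepsilon(a).
\]
Each $V\in\NN_N$ is globally Lipschitz and generates a global flow. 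Its restriction $(\Phi_V^t)_{t\ge0}$ to nonnegative times is therefore a continuous autonomous semiflow on $\R^N$: joint continuity in $(t,x)$ holds by continuous dependence for Lipschitz ODEs, and the semigroup law holds by uniqueness. This contradicts Theorem~\ref{thm:two-cell}. Hence $(\mathsf P_\delta)$ fails for every $\delta>0$.

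\textbf{The SCC addendum.} I would argue the same way, without reference to time rescaling. Take any architecture whose admissible evolutions on the original state space are continuous semiflows. SCC, as recalled before the statement, includes the two-ball exchange-and-compress configuration: two disjoint compact convex cells $\overline\B_\delta(a)$ and $\overline\B_\delta(b)$, sent into $\B_\varepsilon(b)$ and $\B_\varepsilon(a)$ respectively. SCC would therefore produce an admissible semiflow $\Psi$ and a time $T'>0$ with exactly the two inclusions excluded by Theorem~\ref{thm:two-cell} at time $T'$. So such an architecture cannot satisfy SCC.

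\textbf{Main obstacle.} The only step needing care is verifying the semiflow hypotheses of Theorem~\ref{thm:two-cell} for the ReLU flows, and these are standard. Everything else is direct instantiation of the earlier results.
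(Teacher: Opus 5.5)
Your proposal is correct and follows the paper's proof essentially verbatim. The $\delta=0$ direction is Corollary~\ref{thm:qualitative-interpolation}. The $\delta>0$ direction applies $(\mathsf P_\delta)$ with the two targets exchanged to contradict Theorem~\ref{thm:two-cell}. SCC fails because it includes that same two-ball configuration. Your explicit check that the restriction of a ReLU flow to $t\ge0$ is a continuous semiflow fills in a step the paper leaves implicit.
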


\begin{corollary}[Failure of uniform universal approximation by autonomous time maps]
\label{cor:uniform-uap-obstruction}
Let $N\geq1$ and $T>0$. There exist a compact ball
$\mathcal K\subset\mathbb R^N$, a map
$f\in C_c^\infty(\mathbb R^N;\mathbb R^N)$, and a constant $c>0$ such that
\[
\norm{\Psi^T-f}_{C(\mathcal K)}\geq c
\]
for every continuous autonomous semiflow $(\Psi^t)_{t\geq0}$ on $\mathbb R^N$.
Consequently, time-$T$ maps of continuous autonomous semiflows are not dense in
$C(\mathcal K;\mathbb R^N)$ in the uniform topology.
\end{corollary}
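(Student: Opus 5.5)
The plan is to reduce Corollary~\ref{cor:uniform-uap-obstruction} to Theorem~\ref{thm:two-cell}, using a single smooth target $f$ that exchanges and compresses two disjoint balls. Fix $\delta=1$ and two centers $a,b\in\R^N$ with $|a-b|=4$; for instance $a=0$ and $b=4e_1$. Take $\mathcal K=\overline\B_R$ with $R$ large enough, say $R=6$, that $\mathcal K$ contains $\overline\B_1(a)\cup\overline\B_1(b)$. Choose $f\in C_c^\infty(\R^N;\R^N)$ with $f\equiv b$ on a neighborhood of $\overline\B_1(a)$ and $f\equiv a$ on a neighborhood of $\overline\B_1(b)$. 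This is possible with smooth cutoffs: set $f=b\,\chi_a+a\,\chi_b$, where $\chi_a,\chi_b$ are smooth bump functions equal to $1$ near the respective closed balls and having disjoint compact supports, which is available since the balls are at distance $2$. Then $f$ has compact support.

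Set $c:=1/2$, so that $0<c<\delta=1$. We argue by contradiction. Suppose some continuous autonomous semiflow satisfies $\norm{\Psi^T-f}_{C(\mathcal K)}<c$. For $x\in\overline\B_1(a)\subset\mathcal K$ this gives $|\Psi^T(x)-b|<c$, so $\Psi^T(\overline\B_1(a))\subset\B_c(b)$. In the same way, $\Psi^T(\overline\B_1(b))\subset\B_c(a)$. Since $|a-b|=4>2\delta$ and $0<c<\delta$, this contradicts Theorem~\ref{thm:two-cell} with $\varepsilon=c$. Hence $\norm{\Psi^T-f}_{C(\mathcal K)}\ge c$ for every such semiflow. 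Non-density in $C(\mathcal K;\R^N)$ follows at once, because $f|_{\mathcal K}$ lies in that space and has uniform distance at least $c$ from the whole set of time-$T$ maps.

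The argument also works for $N=1$, since Theorem~\ref{thm:two-cell} allows $N\ge1$. No step is a real obstacle; the only points needing care are these:
\begin{itemize}
\item The strict inequality in the contradiction hypothesis must produce the open balls $\B_c$ required by Theorem~\ref{thm:two-cell}.
\item The compact-support requirement on $f$ is met by the cutoff construction.
\item Both closed balls must lie inside $\mathcal K$, which the choice of $R$ ensures.
\end{itemize}
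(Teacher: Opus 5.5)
Your proposal is correct and follows essentially the same route as the paper: you build the smooth exchange map $f=b\chi_a+a\chi_b$ on two separated balls, then turn a uniform error below the ball radius into the two-ball routing inclusions forbidden by Theorem~\ref{thm:two-cell}. The only difference is cosmetic: you take $\varepsilon=c=1/2$ directly, whereas the paper chooses $\varepsilon$ strictly between the uniform error and $\delta$, which yields the sharper constant $c=\delta$.
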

\begin{proof}
Fix $\delta>0$ and choose $a,b\in\mathbb R^N$ with $|a-b|>4\delta$.
There are disjoint open neighborhoods of $\overline\B_\delta(a)$ and
$\overline\B_\delta(b)$ and smooth compactly supported cutoff functions
$\eta_a,\eta_b$ that equal one on the respective closed balls and whose
supports lie in those neighborhoods. Set $f:=b\eta_a+a\eta_b$, and let
$\mathcal K$ be a closed ball containing both supports. Then
$f\equiv b$ on $\overline\B_\delta(a)$ and
$f\equiv a$ on $\overline\B_\delta(b)$.

If $\norm{\Psi^T-f}_{C(\mathcal K)}<\delta$, choose
$\varepsilon$ strictly between this norm and $\delta$. The uniform estimate on
the two closed source balls then gives
\[
\Psi^T(\overline\B_\delta(a))\subset\B_\varepsilon(b),
\qquad
\Psi^T(\overline\B_\delta(b))\subset\B_\varepsilon(a),
\]
contradicting Theorem~\ref{thm:two-cell}. Hence the asserted bound holds with
$c=\delta$.
\end{proof}

Combining Remark~\ref{rem:one-dimensional-obstruction}, Corollary~\ref{thm:qualitative-interpolation}, and Theorem~\ref{thm:two-cell} gives the sharp point-to-neighborhood picture. Universal exact point interpolation fails in dimension one by order preservation, while for the shallow autonomous ReLU class it holds in every dimension $N\ge2$; at every fixed positive radius, however, universal neighborhood routing fails for continuous autonomous semiflows. Equivalently, for $N\ge2$,
\[
\NN_N\text{ has property }(\mathsf P_\delta)
\quad\Longleftrightarrow\quad
\delta=0.
\]
Thus the positive and negative results meet at the point scale: exact finite interpolation is compatible with autonomy, whereas universal fixed-radius routing is not.

\subsection{Quantitative refinement for shallow ReLU flows}
\label{sec:quantitative-refinement}

For the quantitative statement, set
\[
\cV:=\{x_1,\ldots,x_M,y_1,\ldots,y_M\}_{\rm distinct},
\qquad K:=\abs{\cV}.
\]
For \(K\ge2\), define
\[
\sep(\cV):=\min_{v\ne v'}\abs{v-v'},
\qquad
\diam(\cV):=\max_{v,v'}\abs{v-v'},
\]
and the affine aspect ratio
\begin{equation}
\mathfrak a(\cV):=
\inf_{L_{\rm geom}\in\mathrm{GL}(N)}
\frac{\diam(L_{\rm geom}\cV)}{\sep(L_{\rm geom}\cV)}.
\end{equation}
For \(K=1\), set \(\mathfrak a(\cV)=1\).

For an affine normalization \(S(x)=B(x-x_*)\), \(B\in\mathrm{GL}(N)\), and a parameter list
\(\theta=((w_j,a_j,b_j))_{j=1}^p\), define
\begin{equation}
\begin{aligned}
\Str_S(\theta)
:=
\sum_{j=1}^p\abs{Bw_j}
\sqrt{
\abs{B^{-\top}a_j}^2+
\abs{a_j^\top x_*+b_j}^2
},
\qquad
\norm{\theta}_{S,2}^2
:=
\sum_{j=1}^p\left(
\abs{Bw_j}^2+
\abs{B^{-\top}a_j}^2+
\abs{a_j^\top x_*+b_j}^2
\right).
\end{aligned}
\end{equation}
The main bound is stated in terms of these two representation-dependent quantities. Their functional interpretation, the positive-homogeneity balancing identity, and the approximation exponents used in the proof are introduced later, at the point where they enter the argument.

\begin{theorem}[Quantitative shallow-ReLU realization]
\label{thm:quantitative-exactification}
For every \(N\ge2\), there exists \(C_N\ge2\), depending only on \(N\), with the following property.  Let \(T>0\), and let \(\cD=\{(x_i,y_i)\}_{i=1}^M\) be admissible.  There exist \(B\in\mathrm{GL}(N)\), \(x_*\in\R^N\), the affine map \(S(x)=B(x-x_*)\), and a parameter list \(\theta\) such that
\begin{align}
&\Phi_{V_\theta}^T(x_i)=y_i,\quad i=1,\ldots,M, \label{eq:main-exactness}\\
&\max\{p(\theta),T\Str_S(\theta)\}\le \mathfrak U_N(\cD):= \bigl(C_NK\mathfrak a(\cV)\bigr)^{C_NK}.
\end{align}
If \(K\ge2\), the affine map can be chosen so that
\begin{equation}
S(\cV)\subset\overline\B_{1/2},
\qquad
\sep(S(\cV))\ge\frac{1}{4\mathfrak a(\cV)}.
\label{eq:main-normalization}
\end{equation}
If \(K=1\), one may take \(S=\Id\) and the empty parameter list.  In all cases, the units may be balanced without changing the represented field, so that
\begin{equation}
\label{eq:main-balanced}
\norm{\theta}_{S,2}
\le
\sqrt{\frac{2\mathfrak U_N(\cD)}{T}}.
\end{equation}
In particular,
\begin{equation}
\label{eq:main-affine}
p_{\min}(\cD,T)\le\mathfrak U_N(\cD).
\end{equation}
\end{theorem}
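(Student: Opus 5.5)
The plan is to run the proof of Theorem~\ref{thm:abstract-exactification} quantitatively, in coordinates where the data are normalized. \textbf{Step 0, normalization.} Take $L_{\rm geom}$ that nearly attains $\mathfrak a(\cV)$, say with ratio at most $2\mathfrak a(\cV)$. Put $x_*$ at the center of a ball containing $\cV$, and rescale so that $S(\cV)\subset\overline\B_{1/2}$. This gives \eqref{eq:main-normalization}. Conjugating the flow by $S$ sends ReLU fields to ReLU fields: $V_\theta(x)$ corresponds to $BV_\theta(S^{-1}z)=\sum_j Bw_j(\langle B^{-\top}a_j,z\rangle+a_j^\top x_*+b_j)_+$. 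Hence $\Str_S(\theta)$ is exactly the ordinary strength $\sum|\tilde w_j|\,|(\tilde a_j,\tilde b_j)|$ of the transported parameters. Time rescaling $t\mapsto t/T$ multiplies the weights by $1/T$, which explains the product $T\Str_S$. It therefore suffices to work at $T=1$ with data in $\overline\B_{1/2}$ and separation $\sigma:=1/(4\mathfrak a)$. The case $K=1$ is trivial: there all constraints are stationary, so $x_1=y_1$, and the empty field works.

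\textbf{Step 1, reference field and corrections with explicit bounds.} Complete the correspondence into cycles and fixed points as in the graph-completion module. Realize each cycle by a planar rotation that visits $K'\le K$ points equally spaced on a circle of model radius $1$. Then transport this model to the normalized configuration by a compactly supported diffeomorphism $\Theta$ built from finitely many point moves. We assume, from the appendix on finite transitivity, the following bound: each elementary move of a point by distance $O(1)$, inside a tube of width $\sim\sigma/K$, costs $C^s$ norms of order $(K/\sigma)^{s}$. Composing $O(K)$ such moves gives $\|D^q\Theta^{\pm1}\|\le (CK\mathfrak a)^{C K}$. This produces a smooth reference field $u_0$ and correction fields $u_1,\dots,u_m$ that are bumps near private trajectory segments and near the stationary points. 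The key point is that the endpoint sensitivity matrix $A$ is computed in rotation coordinates, where it is explicit. Its inverse satisfies $\|A^{-1}\|\le (CK\mathfrak a)^{CK}$, and the second-order remainder of the endpoint map is controlled by the same quantity. Working in rotation coordinates is what avoids a factor $e^{\Lip(u_0)}$.

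\textbf{Step 2, ReLU approximation with coefficient control.} For each $u_i$, use a Barron/Sobolev-type approximation theorem for ReLU ridge sums on $\overline\B_1$. It yields $V_i$ with $\|V_i-u_i\|_{C(\overline\B_1)}\le\eta$, width $\lesssim \eta^{-C_N}\|u_i\|_{C^{s_N}}^{C_N}$, and strength $\lesssim\|u_i\|_{C^{s_N}}$ (times polylog factors). Choose $\eta=(CK\mathfrak a)^{-CK}$ for the corrections so that $\hat A$ remains invertible, and choose the radius $\rho$ of the coefficient ball accordingly. Then approximate $u_0$ to accuracy $\sim\rho/\|A^{-1}\|$. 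Flows stay in $\overline\B_1$ by a Gr\"onwall estimate done in rotation coordinates. The degree argument of Section~\ref{sec:exactification} then gives coefficients $c\in\overline\B_\rho^m$ with $\End(V_0+\sum c_iV_i)=Y$. Widths add: $p\le p_0+\sum p_i\le m\,(CK\mathfrak a)^{CK\cdot C_N}$. Since $m=NK$ is absorbed after enlarging $C_N$, this gives the bound $\mathfrak U_N$, and strength is subadditive in the same way. The balanced bound \eqref{eq:main-balanced} follows from positive homogeneity: rescale $(w_j,a_j,b_j)\mapsto(\lambda_j^{-1}w_j,\lambda_j a_j,\lambda_j b_j)$ so the two factors are equal. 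Then $\|\theta\|_{S,2}^2=2\Str_S\le 2\mathfrak U_N/T$. Finally, \eqref{eq:main-affine} is immediate.

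\textbf{Main obstacle.} The hardest part is Step 1: getting bounds on $\Theta$, on $A^{-1}$, and on the endpoint remainder that are only of the form $(CK\mathfrak a)^{CK}$. The Lipschitz constant of the transported field can be huge, so naive Gr\"onwall bounds would be doubly exponential. I would first try to estimate all perturbed trajectories in the pulled-back rotation coordinates. There the unperturbed flow is an isometry, so perturbations grow only linearly, and the $C^1$ norm of $\Theta$ enters polynomially rather than in an exponent.
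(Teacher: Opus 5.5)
Your plan is essentially the paper's proof. It uses the same affine near-minimizer normalization with $T\Str_S(\theta)=\Str(\widehat\theta)$, the cycle-completed rotation model transported by $O(K)$ point pushes, private-tube corrections, ReLU approximation of the corrections before the base field, Gr\"onwall estimates in standard rotation coordinates to avoid a factor $e^{\Lip(f)}$, the degree step, additive width--strength accounting, and unitwise balancing.

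One step needs fixing. The transport is the identity off thin push cylinders, so the reference trajectories largely retrace the model circles, which are spread over a region of size $O(K)$, and they do leave $\overline\B_1$. Your claim that flows stay in $\overline\B_1$ is therefore false, and the approximation must be done on a ball of radius $R_*\le(CK\mathfrak a(\cV))^{CK}$, as the paper does. This is harmless, since the ReLU budgets grow only like $(1+R_*)^{s_N}$.

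Also make explicit that the coefficient-ball radius depends on the Lipschitz constants of the ReLU correction fields. Your strength bound, which does not depend on the accuracy, controls these via $\Lip(g)\le\Str(g)$. This is exactly the paper's budget $\mathfrak B(\mathbf g)\le\mathfrak b$.
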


The proof combines the finite-dimensional endpoint-correction construction of Section~\ref{sec:exactification}, the quantitative bridge of Proposition~\ref{prop:quantitative-interface}, and the coefficient-controlled ReLU approximation and width--strength accounting of Section~\ref{sec:quantitative}; the bridge estimates are established in Appendix~\ref{app:stability-degree}.

\FloatBarrier

\section{Finite-dimensional endpoint correction for autonomous flows}
\label{sec:exactification}

We follow the four steps described in the introduction. Section~\ref{sec:reference-construction} constructs a smooth autonomous reference field, and Section~\ref{sec:endpoint-frame-module} constructs correction fields with identity first-order endpoint response. Section~\ref{sec:stability-degree} shows that approximate correction fields retain an invertible response at the smooth reference field, controls the error after approximation of the reference field, and removes the residual by Brouwer degree. Section~\ref{sec:proof-main-theorem} combines these ingredients within the prescribed class. Quantitative bounds on the geometric construction are deferred to Appendix~\ref{app:finite-transitivity} and used in the ReLU refinement.

For the fixed-order regularity estimates used later, set
\begin{equation}
\label{eq:fixed-regularity-order}
s_N:=\left\lfloor\frac{N+3}{2}\right\rfloor+2.
\end{equation}
The order $s_N$ is used to index the derivative bounds for the transported reference field and endpoint directions.  It is not an assumption of Theorem~\ref{thm:abstract-exactification}: the qualitative exactification argument itself uses only smoothness, while the explicit $C^{s_N}$ bounds are recorded in Appendix~\ref{app:finite-transitivity} for the quantitative analysis of Section~\ref{sec:quantitative}.

\subsection{Smooth autonomous reference realizations}
\label{sec:reference-construction}

\subsubsection{Directed correspondence graphs and cycle completion}
\label{sec:graph-completion}

Associate with the dataset the directed graph whose distinct vertices form
\[
\cV:=\{x_1,\ldots,x_M,y_1,\ldots,y_M\}_{\rm distinct},
\]
and whose prescribed edges are \(x_i\to y_i\).

\begin{lemma}[Cycle completion of an admissible correspondence]
Every connected component of the directed data graph is a directed path, a directed cycle, or a self-loop.  By adding one unconstrained edge from the terminal vertex to the initial vertex of each directed path, the graph becomes a disjoint union of directed cycles and self-loops.  No prescribed edge is changed and no additional constrained sample is introduced.
\end{lemma}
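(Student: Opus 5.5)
The plan is to argue entirely through in- and out-degrees in the directed data graph $G$ on $\cV$, whose edge set is $E=\{(x_i,y_i)\}_{i=1}^M$. After discarding repeated identical constraints, $E$ has no duplicate edges. The key observation is that admissibility gives every vertex out-degree at most one and in-degree at most one. Out-degree at most one holds because the inputs $x_i$ are pairwise distinct, so each vertex is the tail of at most one edge. In-degree at most one holds because the targets $y_i$ are pairwise distinct, so each vertex is the head of at most one edge. A self-loop $x_i=y_i$ uses up both the out-slot and the in-slot of its vertex, so that vertex has no other incident edges and forms an isolated component.

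Next I classify a connected component $C$ (connected in the underlying undirected sense) that is not a self-loop. Every vertex of $C$ has total degree at most two, so the underlying undirected graph is a simple path or a simple cycle. It cannot be a single isolated vertex, because every vertex of $\cV$ is an endpoint of some edge.

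1. If the underlying graph is a cycle, every vertex has total degree two. Since both in- and out-degree are at most one, each vertex has exactly one incoming and one outgoing edge. Following the out-edges from any vertex therefore traverses the whole cycle coherently, and $C$ is a directed cycle.
2. If the underlying graph is a path $v_0,\dots,v_\ell$ with $\ell\ge1$, each interior vertex has degree two and hence one in-edge and one out-edge. Orientations therefore propagate consistently along the path, and $C$ is a directed path from a unique initial vertex (in-degree zero) to a unique terminal vertex (out-degree zero).

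Two-vertex cases such as $x_1\to x_2$, $x_2\to x_1$ are directed cycles of length two, and they are covered by case 1.

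For the completion step, I add to each directed path component one new edge from its terminal vertex $v_\ell$ to its initial vertex $v_0$. This edge is unconstrained, meaning it is not of the form $(x_i,y_i)$ for any $i$. The operation is legitimate for three reasons.

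1. $v_\ell\neq v_0$ since $\ell\ge1$, so the new edge is not a self-loop.
2. The new edge fills exactly the empty out-slot of $v_\ell$ and the empty in-slot of $v_0$, so every vertex of the component now has in-degree and out-degree exactly one, and the component becomes a directed cycle of length $\ell+1$.
3. Distinct path components are disjoint, so the additions do not interact with each other or with the components that were already cycles or self-loops.

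No prescribed edge is modified, and no new vertex or sample index is introduced. The new edges only record how the reference rotation will close each orbit.

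The only delicate point is checking that "component" is understood in the weakly connected sense and that the degree bound forces coherent orientation. This is the step where both halves of admissibility are used, so it is the main thing to verify carefully.
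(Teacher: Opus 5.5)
Your proposal is correct and follows the same route as the paper: distinct inputs and distinct targets give out-degree and in-degree at most one, which forces path, cycle, and self-loop components, and each path is closed by one unconstrained terminal-to-initial edge. The only difference is that you spell out the classification step, including coherent orientation and the two-vertex cycle case, which the paper simply asserts.
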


\begin{proof}
Pairwise distinct inputs imply that every vertex has out-degree at most one, while pairwise distinct targets imply that every vertex has in-degree at most one.  A finite directed graph with these two properties has only path, cycle, and self-loop components.  A nonclosed path has a unique initial vertex with no incoming edge and a unique terminal vertex with no outgoing edge.  Adding the single terminal-to-initial edge closes that component without changing the outgoing edge of any vertex that already carries a prescribed assignment.  Repeating this independently for every path gives the claimed disjoint cycle decomposition.
\end{proof}

Figure~\ref{fig:correspondence-completion} illustrates the three component types and the auxiliary edge used to close a path.

\begin{figure}[!ht]
\centering
\begingroup\setlength{\fboxsep}{0pt}\setlength{\fboxrule}{0.8pt} \includegraphics[width=.8\textwidth]{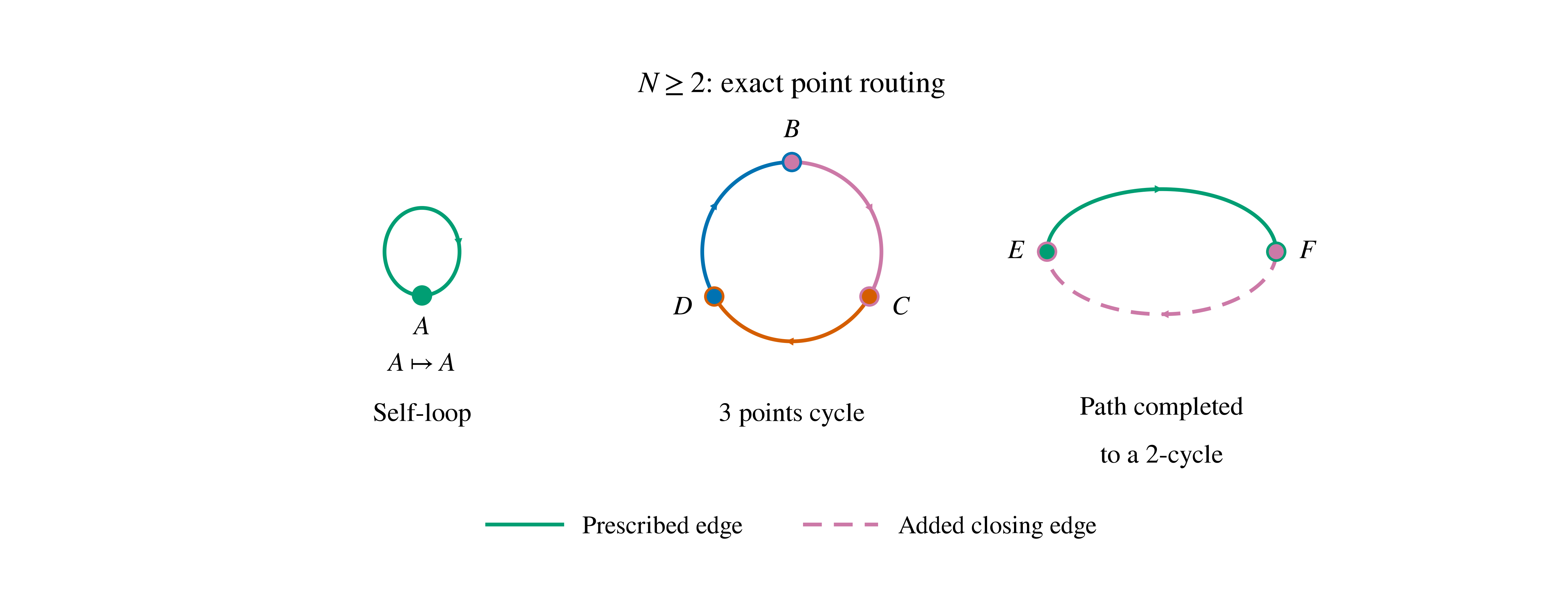}\endgroup
\caption{The three component types used in the reference construction.  Left: a stationary constraint is represented by a self-loop.  Center: a prescribed three-point cycle.  Right: a directed path is completed to a two-cycle by one auxiliary closing edge (dashed).  The auxiliary edge changes no original endpoint constraint.  The displayed curves are schematic reference routes at the data vertices, not the standard-model trajectories of $z_{\nu,k}$ under $f_0$ introduced below.}
\label{fig:correspondence-completion}
\end{figure}

\subsubsection{Standard cyclic and stationary models}

Index the completed components by positive integers \(\nu\), put \(c_\nu:=4\nu e_1\), and let a nontrivial component contain \(n_\nu\ge2\) vertices.  Place its standard representatives at
\[
z_{\nu,k}=c_\nu+\cos(2\pi k/n_\nu)e_1
+\sin(2\pi k/n_\nu)e_2,
\qquad k=0,\ldots,n_\nu-1.
\]
Let \(\mathsf R_{\rm plane}e_1=e_2\), \(\mathsf R_{\rm plane}e_2=-e_1\), and \(\mathsf R_{\rm plane}e_j=0\) for \(j\ge3\).  Fix \(\chi\in C_c^\infty((1/2,3/2))\), with \(0\le\chi\le1\) and \(\chi\equiv1\) on \([3/4,5/4]\), and define
\begin{equation}
f_{0,\nu}(z)
:=\omega_\nu\chi(|z-c_\nu|)\mathsf R_{\rm plane}(z-c_\nu),
\qquad
\omega_\nu:=\frac{2\pi}{n_\nu}.
\end{equation}
Place each self-loop at the corresponding center \(c_\nu\), and let \(f_0\) be the sum of the component fields.  Their supports are disjoint.  On every standard cycle, \(\chi=1\), so the flow is the planar rotation of angular speed \(2\pi/n_\nu\); hence
\[
\Phi_{f_0}^1(z_{\nu,k})=z_{\nu,k+1\,({\rm mod}\,n_\nu)}.
\]
A standard self-loop lies at a center where \(f_0=0\), and is therefore fixed. Thus the completed correspondence is realized exactly by one smooth autonomous standard field. For each fixed derivative order \(s\), the norm \(\norm{f_0}_{C^s}\) is bounded by a constant depending only on \(N\), \(s\), and the fixed cutoff template, independently of the number of components. Indeed, the component supports are disjoint, their angular speeds satisfy \(\omega_\nu\le\pi\), and translation of the fixed cutoff does not change its derivative bounds.

\subsubsection{Finite-point relocation by localized point pushes}

The standard vertices must now be transported to the actual data configuration.  The following quantitative finite-transitivity statement is also the geometric input for the later upper bound.

\begin{lemma}[Quantitative finite-point relocation] 
\label{lem:finite-transitivity}
Fix \(N\ge2\), \(s\ge2\), \(K\ge1\), and \(R\ge1\). Let \(\mathbf z=(z_1,\ldots,z_K)\) and \(\mathbf w=(w_1,\ldots,w_K)\) be ordered configurations of distinct points in \(\B_R\), and put
\[
d_{\rm cfg}:=
\begin{cases}
1,&K=1,\\
\min\{1,\sep(\{z_j\}),\sep(\{w_j\})\},&K\ge2.
\end{cases}
\]
If an open set \(\Omega_{\rm tr}\) contains \(\overline\B_{R+4K+4}\), there is \(H\in\Diff_c^\infty(\Omega_{\rm tr})\) such that \(H(z_j)=w_j\) and
\begin{equation}
\label{eq:H-bound}
1+\max_{1\le q_{\rm der}\le s}
\left(
\norm{D^{q_{\rm der}}H}_\infty+
\norm{D^{q_{\rm der}}H^{-1}}_\infty
\right)
\le
\left(
\frac{C_{\mathrm{tr},N,s}(1+K)(1+R)}{d_{\rm cfg}}
\right)^{C_{\mathrm{tr},N,s}K},
\end{equation}
where \(C_{\mathrm{tr},N,s}\ge2\) depends only on \(N\) and \(s\).
\end{lemma}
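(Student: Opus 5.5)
We will build $H$ as a composition of at most $O(K)$ "point pushes", each the time-one map of a smooth compactly supported field. Each push moves a single point along a straight segment while fixing all other currently placed points. The derivative bound \eqref{eq:H-bound} will then follow from the chain rule (Faà di Bruno) applied to a composition of $O(K)$ maps with controlled $C^s$ norms. That is the source of the exponent $C_{\mathrm{tr},N,s}K$.

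\textbf{Step 1: an intermediate configuration.} First move the $z_j$ one at a time to well-separated "parking" positions $p_j$, and likewise the $w_j$ to the same $p_j$. Then $H=G_w^{-1}\circ G_z$, and it suffices to bound $G_z$ and $G_w$ and their inverses. Choose the parking spots on a line far out, for instance $p_j:=(R+2+2j)e_1$ (adjusted by a unit offset in $e_2$ if needed). They then lie in $\B_{R+4K+4}$ and have mutual distances $\ge 2$.

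\textbf{Step 2: single pushes.} To move the point $z_j$ to $p_j$ while avoiding the other current points, we use $N\ge2$. The points already parked, and the points not yet moved, form a finite set with separation $\ge d_{\rm cfg}$ among the original points and $\ge1$ among the parked ones. We route $z_j$ along a polygonal path of at most two or three segments that stays at distance $\gtrsim d_{\rm cfg}/K$ from all other points. First we step off in a direction transversal to the finitely many lines joining $z_j$ to other points; such a direction exists because $N\ge2$, and a pigeonhole argument over angles gives clearance of order $d_{\rm cfg}/K$. Then we travel to the parking region above the line of parking spots, and finally descend to $p_j$.

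For each segment $[u,u+\ell\xi]$ with clearance $r$, take the field $X=\ell\,\psi(\cdot)\xi$, where $\psi\equiv1$ on the $r/4$-neighborhood of the segment and $\psi$ is supported in the $r/2$-neighborhood. The time-one flow moves $u$ to $u+\ell\xi$, since $X\equiv\ell\xi$ along the trajectory, and it fixes every other point. The support stays inside $\overline\B_{R+4K+4}\subset\Omega_{\rm tr}$, so the push lies in $\Diff_c^\infty(\Omega_{\rm tr})$. We have $\|X\|_{C^s}\lesssim (1+R+K)\,r^{-s}$. Gronwall-type bounds for flows give $\|D^q\Phi_X^1\|_\infty\le \exp(C\|X\|_{C^1})\,\mathrm{poly}(\|X\|_{C^s})$. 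The same holds for the inverse, which is the time-$(-1)$ map.

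\textbf{Step 3: composition.} Altogether there are $O(K)$ pushes. Faà di Bruno shows that the $C^s$ norm of a composition of $L$ maps is at most $(C_s\max_i \Lambda_i)^{C_sL}$, where $\Lambda_i$ bounds $1+\max_{q\le s}\|D^qF_i\|$. Combining this with the push bounds gives a bound of the shape $\left(C(1+K)(1+R)/d_{\rm cfg}\right)^{CK}$ for $H$ and for $H^{-1}=G_z^{-1}\circ G_w$.

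\textbf{Main obstacle.} The factor $\exp(C\|X\|_{C^1})$ from Step 2 is too large: with $\|X\|_{C^1}\sim(1+R)K/d_{\rm cfg}$ it would give exponential rather than polynomial dependence. The first fix to try is to subdivide each segment into pieces of length $\sim r$. Each piece then has $\|X\|_{C^1}=O(1)$, and the total number of pieces becomes $O((1+R)K/d_{\rm cfg})$ per point. That count is again too many for the exponent $CK$.

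Instead, we should use rescaling. Conjugating by a dilation, a push along a segment of length $\ell$ with tube radius $r$ is the time-one map of a field whose $C^1$ norm is $\ell/r$. We can get a better flow bound from the structure of the field: along the segment it is constant, so the only derivative growth comes from the transverse boundary layer. There the trajectory spends time $O(1)$, and $D X$ is of size $\ell/r$ only in the direction $\xi$. So $DX$ is rank one with $\xi^{\top}DX\,\xi=0$ to leading order, essentially nilpotent, and its flow derivative grows polynomially: $\|D\Phi\|\lesssim 1+\ell/r$, and similarly for higher derivatives. Verifying this polynomial, shear-type estimate for the push maps is the key technical step. Once it holds, each push contributes a factor that is polynomial in $(1+R)K/d_{\rm cfg}$, and Step 3 closes the bound.
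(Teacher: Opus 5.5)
Your overall architecture is the paper's: parking points, obstacle-avoiding polygonal routes that use $N\ge2$, one localized push per segment, and a Fa\`a di Bruno composition bound with exponent linear in the $O(K)$ factors. The gap is the single-push estimate. It carries the whole quantitative content of the lemma, you only assert it, and the mechanism you give for it is incorrect.

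Although $DX=\ell\,\xi\,\nabla\psi^\top$ has rank one, $\xi^\top DX\,\xi=\ell\,\partial_\xi\psi$ is of size $\ell/r$ on the two end caps of the tube. So $DX$ is far from nilpotent exactly where the stretching occurs. On each line parallel to $\xi$, $\Phi_X^1$ is the time-one map of the scalar equation $\dot\sigma=\ell\psi$, and its axial derivative is exactly $\psi(\Phi_X^1(x))/\psi(x)$. Points starting where $\psi$ is tiny are accelerated onto the plateau, and the size of this ratio depends on how $\psi$ vanishes at the edge of its support. In fact it is never $O(1+\ell/r)$, so the bound you state is false. Glaeser's inequality $|\partial_\sigma\psi|\lesssim r^{-1}\sqrt{\psi}$ does give $C(1+\ell/r)^2$ for this first derivative. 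The higher, mixed and inverse derivatives of $\Phi_X^1$, however, need a genuinely further argument, and that is the missing step. A product cutoff reduces $\Phi_X^1$ to one-dimensional flows but does not remove this difficulty.

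The paper avoids flows altogether. It writes down an axial diffeomorphism $h_{\ell,r}(\sigma)=-r+\int_{-r}^{\sigma}\lambda$, with $\lambda=1+\frac{\ell}{I_-}\beta_--\frac{\ell}{I_+}\beta_+$, where $\beta_-,\beta_+$ are bumps supported in $[-r,0]$ and $[0,\ell+r]$ with masses $I_\pm$. This gives $h_{\ell,r}(0)=\ell$, $h_{\ell,r}=\mathrm{Id}$ off $[-r,\ell+r]$, and $h_{\ell,r}'\ge c\,r/(\ell+r)$. The push is $(\sigma,y)\mapsto(\sigma+\zeta_r(y)(h_{\ell,r}(\sigma)-\sigma),y)$, whose axial derivative is a convex combination of $1$ and $h_{\ell,r}'$. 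Direct differentiation, and implicit differentiation for the inverse, then give bounds polynomial in $(1+\ell)/r$ with no Gronwall factor anywhere.

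The routing in your Step 2 also needs repair. A step-off direction transversal to the lines through $z_j$ controls only the short first segment. Nothing in your argument keeps the long segment, from the step-off point to a point above the parking line, at a definite distance from the remaining original points or from the already parked points on the $e_1$-axis.

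The paper instead uses a single intermediate point $\xi_{\rm sh}$ in the annulus $R_{\rm sh}-1<|\xi|<R_{\rm sh}$, with $R_{\rm sh}=R+4K+1$. For each endpoint $a_{\rm end}\in\{a,b\}$ and each obstacle $o_j$, the set of $\xi$ with $[a_{\rm end},\xi]\cap\B_{4\delta_{\rm tr}}(o_j)\ne\varnothing$ lies in a cone at $a_{\rm end}$ of angular radius $8\delta_{\rm tr}/d_{\rm cfg}$. A volume union bound over these $2(K-1)$ cones leaves a point good for both segments once $\delta_{\rm tr}\asymp d_{\rm cfg}/((1+K)(1+R_{\rm sh}))$. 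That clearance, used as the cylinder radius, is what produces the factor $(1+K)(1+R)/d_{\rm cfg}$ in the final bound.
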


\begin{proof}[Proof sketch]
Choose mutually separated parking points outside the ball containing the source and target configurations. Move the source points to the parking points one at a time, and then move the parking points to the targets one at a time; all points not currently moving are treated as obstacles.  For each individual move from \(a\) to \(b\), one can choose a distant intermediate point \(\xi\) so that both segments \([a,\xi]\) and \([\xi,b]\) stay a positive distance from every obstacle.  This is the only step where \(N\ge2\) is essential: the set of forbidden directions is a finite union of small spherical caps and cannot exhaust the available directions.

Along each clear segment, apply a compactly supported smooth cylinder push that sends its initial endpoint to its terminal endpoint and is the identity near the cylinder boundary.  Because the support cylinder avoids all obstacles, the push fixes every point not currently moving.  Composing at most \(4K\) such pushes gives a compactly supported diffeomorphism satisfying \(H(z_j)=w_j\).  The explicit cylinder map, its inverse, the support verification, and the product--chain estimates leading to \eqref{eq:H-bound} are given in Appendix~\ref{app:finite-transitivity}.
\end{proof}

\subsubsection{Smooth autonomous reference realization}
\label{sec:smooth-reference-module}

\begin{proposition}[Smooth autonomous reference realization]
\label{prop:smooth-reference-module}
Let \(N\ge2\) and let \(\cD\) be admissible.  There exists a compactly supported smooth autonomous field \(f\) whose unit-time flow satisfies
\[
\Phi_f^1(x_i)=y_i,
\qquad i=1,\ldots,M.
\]
The marked trajectories remain pairwise distinct at every common time.  In the normalized setting
\[
\cV\subset\overline\B_{1/2},
\qquad \sep(\cV)=\rho,
\qquad 0<\rho\le1,
\]
the same construction admits support and fixed-order derivative bounds depending only on $N,K,\rho$; these bounds are recorded in Appendix~\ref{app:finite-transitivity}.
\end{proposition}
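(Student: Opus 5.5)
The plan is to conjugate the standard field $f_0$ by a relocation diffeomorphism. First apply the cycle-completion lemma to the directed data graph on $\cV$. This yields a disjoint union of directed cycles and self-loops on the $K$ vertices, and every prescribed edge $x_i\to y_i$ is an edge of the completed graph. Enumerate the components $\nu$, and assign the vertices of a cycle of length $n_\nu$ to the standard points $z_{\nu,0},\dots,z_{\nu,n_\nu-1}$ in the cyclic order of the completed graph, so that each edge $v\to v'$ corresponds to $z_{\nu,k}\to z_{\nu,k+1}$. Assign a self-loop vertex to the center $c_\nu$. This gives a bijection between $\cV$ and a configuration $\mathbf z$ of $K$ distinct standard points, and we list $\cV$ as $\mathbf w$ in matching order. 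By the computation preceding Lemma~\ref{lem:finite-transitivity}, $\Phi_{f_0}^1(z)=\sigma(z)$ for each standard point, where $\sigma$ is the completed successor map.

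Next choose $R$ large enough that $\mathbf z$ and $\mathbf w$ lie in $\B_R$. Apply Lemma~\ref{lem:finite-transitivity} with $\Omega_{\rm tr}=\R^N$ to obtain $H\in\Diff_c^\infty(\R^N)$ with $H(z_j)=w_j$. Set
\[
f:=H_*f_0=(DH\circ H^{-1})\,f_0\circ H^{-1}.
\]
This field is smooth. It has compact support, because $f_0$ has compact support (finitely many components, each with support in an annulus around $c_\nu$) and $H$ moves only a compact set. The conjugacy identity $\Phi_f^t=H\circ\Phi_{f_0}^t\circ H^{-1}$ then gives $\Phi_f^1(x_i)=H(\Phi_{f_0}^1(H^{-1}x_i))=H(\sigma(H^{-1}x_i))=y_i$, since the edge $x_i\to y_i$ is preserved.

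The marked trajectories stay distinct because the standard trajectories are distinct at every common time. Points on the same circle are rotated rigidly with distinct phases, different components have disjoint supports, and fixed centers stay put. Injectivity of $H$ carries this to the actual trajectories. Alternatively, distinctness follows directly from uniqueness of solutions for a flow.

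In the normalized setting ($\cV\subset\overline\B_{1/2}$, $\sep=\rho$), the bounds come from three inputs: the $K$-independent $C^s$ bounds on $f_0$, the bound \eqref{eq:H-bound} with $R\sim K$ (the standard centers lie at $4\nu e_1$ with $\nu\le K$), and $d_{\rm cfg}\ge\min\{\rho,\text{const}\}$. The standard configuration has separation at least $\min_n 2\sin(\pi/n)\gtrsim 1/K$, which is absorbed into the factor $(1+K)$. The Faà di Bruno formula for $(DH\circ H^{-1})(f_0\circ H^{-1})$ bounds $\norm{f}_{C^s}$ polynomially in the $H$ and $H^{-1}$ derivative bounds, so the final bound depends only on $N,K,\rho$. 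The support lies in $\overline\B_{R+4K+4}$.

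The main obstacle is this derivative bookkeeping: keeping the bounds of the form $(CK/\rho)^{CK}$ through the chain rule. The qualitative statement itself is routine once Lemma~\ref{lem:finite-transitivity} is available.
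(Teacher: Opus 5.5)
Your proposal is correct and follows the paper's proof essentially step for step. The shared steps are: cycle completion with assignment to the standard rotation/stationary model; relocation by Lemma~\ref{lem:finite-transitivity}; the pushforward $f=H_*f_0$ with conjugacy $\Phi_f^t=H\circ\Phi_{f_0}^t\circ H^{-1}$; and the normalized bounds from \eqref{eq:H-bound} with $R\sim K$ and $d_{\rm cfg}^{-1}\lesssim K/\rho$, fed into the Fa\`a di Bruno pushforward estimate of Lemma~\ref{lem:finite-order-composition}(ii). Note that this estimate needs $H$ one derivative order beyond the target $C^s$ norm, which is why the paper applies the relocation lemma at order $s_N+3$. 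Your alternative collision-freeness argument, via injectivity of $\Phi_f^t$ on the distinct inputs, is even more direct than the paper's conjugation remark.
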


\begin{proof}
Choose an ordering \((\mathsf v_1,\ldots,\mathsf v_K)\) of the actual vertices and the corresponding ordering \((\mathsf v_1^0,\ldots,\mathsf v_K^0)\) of the standard vertices.  Lemma~\ref{lem:finite-transitivity} gives a compactly supported smooth diffeomorphism \(H\) satisfying
\begin{equation}
\label{eq:H-data-map}
H(\mathsf v_j^0)=\mathsf v_j\quad(j=1,\ldots,K),
\qquad
H(x_i^0)=x_i,\quad H(y_i^0)=y_i\quad(i=1,\ldots,M).
\end{equation}
Define the pushforward field \(f:=H_*f_0\), namely
\[
(H_*u)(x):=DH(H^{-1}x)u(H^{-1}x).
\]
Uniqueness of solutions gives the exact conjugacy
\begin{equation}
\label{eq:reference-conjugacy}
\Phi_f^t=H\circ\Phi_{f_0}^t\circ H^{-1}.
\end{equation}
Since every prescribed standard edge is traversed in unit time, \eqref{eq:H-data-map} and \eqref{eq:reference-conjugacy} yield \(\Phi_f^1(x_i)=y_i\).  Standard marked trajectories are either disjoint component circles or distinct synchronized positions on the same circle; conjugation by a diffeomorphism preserves this collision-free property.
\end{proof}

\subsection{Independent endpoint corrections}
\label{sec:endpoint-frame-module}

For the unit-time reference field, write \(\gamma_i(t):=\Phi_f^t(x_i)\), and let \(U_i(t,s)\) denote the fundamental matrix
\[
\partial_tU_i(t,s)=Df(\gamma_i(t))U_i(t,s),
\qquad U_i(s,s)=I_N.
\]
For a perturbation field \(h\), define the endpoint response
\[
\mathcal L_f(h)_i:=\int_0^1U_i(1,t)h(\gamma_i(t))\,\dd t,
\qquad
\mathcal L_f(h):=(\mathcal L_f(h)_1,\ldots,\mathcal L_f(h)_M).
\]
The usual variational equation gives \(D\End(f)[h]=\mathcal L_f(h)\).

\subsubsection{Private trajectory tubes}

For a nonstationary datum, let \(\gamma_i^0\) be its standard circular trajectory and let \(\nu(i)\) denote its completed component.  On \(I_{\rm tube}:=(1/4,3/4)\), introduce longitudinal--transverse coordinates
\begin{equation}
\Theta_i^0(t,z)
:=\gamma_i^0(t)
+z_1\bigl(\gamma_i^0(t)-c_{\nu(i)}\bigr)
+\sum_{a=2}^{N-1}z_ae_{a+1},
\qquad (t,z)\in I_{\rm tube}\times\R^{N-1}.
\end{equation}
Because only finitely many marked trajectories are involved, one may choose a common radius \(\varepsilon_{\rm tube}>0\) sufficiently small that the restriction
\[
\Theta_i^0:I_{\rm tube}\times\B_{2\varepsilon_{\rm tube}}^{N-1}\longrightarrow O_i^0
\]
is a diffeomorphism onto an open set \(O_i^0\), and the middle part of this tube meets no constrained standard trajectory except its own centerline:
\begin{equation}
\label{eq:standard-tube-privacy}
O_i^0\cap\gamma_j^0([0,1])=\varnothing\quad(j\ne i),
\qquad
O_i^0\cap\gamma_i^0([0,1])=\gamma_i^0(I_{\rm tube}).
\end{equation}
This is the geometric localization that permits one endpoint to be varied without affecting the others.  The injectivity, quantitative separation, and inverse-regularity estimates for \(\Theta_i^0\) are proved in Appendix~\ref{app:endpoint-coordinates}.

\subsubsection{Independent endpoint directions}

Choose smooth cutoffs \(\vartheta\in C_c^\infty(I_{\rm tube})\) and \(\zeta_i\in C_c^\infty(\B_{2\varepsilon_{\rm tube}}^{N-1})\) such that \(\int_0^1\vartheta(t)\,\dd t=1\) and \(\zeta_i(0)=1\).  Let \(U_i^0(t,s)\) be the fundamental matrix of \(f_0\) along \(\gamma_i^0\).  For \(a=1,\ldots,N\), define on \(O_i^0\)
\[
k_{i,a}^0(\Theta_i^0(t,z))
:=\vartheta(t)\zeta_i(z)U_i^0(1,t)^{-1}e_a,
\]
and extend it by zero.  By \eqref{eq:standard-tube-privacy}, this field vanishes along all constrained trajectories except the \(i\)th, while on its own centerline the fundamental matrix is exactly cancelled.  Therefore
\begin{equation}
\label{eq:standard-frame-response}
\int_0^1U_j^0(1,t)k_{i,a}^0(\gamma_j^0(t))\,\dd t
=\delta_{ij}e_a.
\end{equation}
For a self-loop datum, take a compactly supported bump equal to \(e_a\) near the stationary point, with support in a neighborhood where the standard field vanishes and which is disjoint from every other marked unit-time path; the same identity follows because the fundamental matrix there is the identity.

Transport and precondition these directions by setting
\begin{equation}
\label{eq:preconditioned-standard-frame}
h_{i,a}:=H_*\!\left(
\sum_{b=1}^N[DH(y_i^0)^{-1}]_{ba}k_{i,b}^0
\right).
\end{equation}
Differentiating the flow conjugacy gives
\begin{equation}
\label{eq:fundamental-conjugacy}
U_i(t,s)=DH(\gamma_i^0(t))U_i^0(t,s)DH(\gamma_i^0(s))^{-1}.
\end{equation}
Equations \eqref{eq:standard-frame-response}--\eqref{eq:fundamental-conjugacy} imply
\(\mathcal L_f(h_{i,a})_j=\delta_{ij}e_a\).  Relabeling the \(m=NM\) fields gives the central endpoint-frame identity
\begin{equation}
\label{eq:ideal-sensitivity-identity}
\bigl[\mathcal L_f(h_1)\ \cdots\ \mathcal L_f(h_m)\bigr]=I_m.
\end{equation}

\begin{proposition}[Independent endpoint corrections]
\label{prop:endpoint-frame-module}
The reference field from Proposition~\ref{prop:smooth-reference-module} admits compactly supported smooth directions \(h_1,\ldots,h_m\) satisfying \eqref{eq:ideal-sensitivity-identity}. In the normalized setting, their support and fixed-order $C^{s_N}$ bounds are recorded in Appendix~\ref{app:zero-extension}.
\end{proposition}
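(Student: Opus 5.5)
The plan is to verify the identity \eqref{eq:ideal-sensitivity-identity} directly from the construction preceding the statement, checking in turn (a) that each $h_{i,a}$ is smooth and compactly supported, (b) that the standard directions $k^0_{i,a}$ have the response \eqref{eq:standard-frame-response}, and (c) that transport by $H$ together with the preconditioning matrix $DH(y_i^0)^{-1}$ converts this into $\mathcal L_f(h_{i,a})_j=\delta_{ij}e_a$.

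\emph{Smoothness and support.} On $O_i^0$ the field $k^0_{i,a}$ is $\vartheta(t)\zeta_i(z)U_i^0(1,t)^{-1}e_a$ composed with $(\Theta_i^0)^{-1}$. This composition is smooth because $\Theta_i^0$ is a diffeomorphism on $I_{\rm tube}\times\B^{N-1}_{2\varepsilon_{\rm tube}}$ and $t\mapsto U_i^0(1,t)^{-1}$ is smooth. The support of $\vartheta\otimes\zeta_i$ is a compact subset of $I_{\rm tube}\times\B^{N-1}_{2\varepsilon_{\rm tube}}$, so its image is a compact subset of $O_i^0$. Hence the zero extension is smooth, and $H_*$ preserves smoothness and compact support. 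The self-loop bumps are handled the same way: their support is chosen inside the zero set of $f_0$, away from the other marked unit-time paths.

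\emph{Standard response.} Fix $j$. If $j\ne i$, the first relation in \eqref{eq:standard-tube-privacy} gives $k^0_{i,a}(\gamma_j^0(t))=0$ for every $t\in[0,1]$, so the integral vanishes. If $j=i$, the second relation shows that $\gamma_i^0(t)$ lies in $O_i^0$ only for $t\in I_{\rm tube}$. For those $t$ we have $\gamma_i^0(t)=\Theta_i^0(t,0)$, and the injectivity of $\Theta_i^0$ forces the coordinate of the point to be exactly $(t,0)$; this is where the tube coordinates must be genuinely injective, which is guaranteed by the choice of $\varepsilon_{\rm tube}$. Therefore $U_i^0(1,t)k^0_{i,a}(\gamma_i^0(t))=\vartheta(t)\zeta_i(0)e_a=\vartheta(t)e_a$, which integrates to $e_a$. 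For a stationary datum, $f_0=0$ near the point gives $U^0=I$ and the bump equals $e_a$ there, so the response is $e_a$; privacy of its support gives zero response for the other samples.

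\emph{Transport.} Write $\gamma_j(t)=H(\gamma_j^0(t))$. By the definition of the pushforward,
\[
(H_*k)(\gamma_j(t))=DH(\gamma_j^0(t))\,k(\gamma_j^0(t)).
\]
Combining this with \eqref{eq:fundamental-conjugacy} at $s=t$, the factor $DH(\gamma_j^0(t))^{-1}DH(\gamma_j^0(t))$ cancels, and we obtain
\[
\mathcal L_f(H_*k)_j=DH(\gamma_j^0(1))\int_0^1U^0_j(1,t)k(\gamma_j^0(t))\,\dd t .
\]
Since $\gamma_j^0(1)=y_j^0$, applying this to $k=\sum_b[DH(y_i^0)^{-1}]_{ba}k^0_{i,b}$ and using linearity gives
\[
\mathcal L_f(h_{i,a})_j=\delta_{ij}\,DH(y_i^0)DH(y_i^0)^{-1}e_a=\delta_{ij}e_a .
\]

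The \emph{main obstacle} is the first-order identity $D\End(f)[h]=\mathcal L_f(h)$. It only requires the standard variational equation for the smooth, compactly supported field $f$, so nothing further is needed at this stage. The quantitative support and $C^{s_N}$ bounds are deferred to the appendix, as the statement says.
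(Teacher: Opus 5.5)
Your proposal is correct and is essentially the paper's argument. Tube privacy and cancellation of $U_i^0(1,t)$ on the centerline give \eqref{eq:standard-frame-response}, and the pushforward identity together with \eqref{eq:fundamental-conjugacy} and the preconditioning by $DH(y_i^0)^{-1}$ yields $\mathcal L_f(h_{i,a})_j=\delta_{ij}e_a$, while smoothness and compact support come from the cutoffs being compactly supported in the coordinate cylinder and from $\supp(H_*u)\subset H(\supp u)$.

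Two small remarks. Concluding $t\in I_{\rm tube}$ from the second privacy relation also uses that $\gamma_i^0$ is injective on $[0,1]$, which holds because its angular sweep is $2\pi/n_{\nu(i)}\le\pi$. Also, the identity $D\End(f)[h]=\mathcal L_f(h)$ that you call the main obstacle is not needed here, since \eqref{eq:ideal-sensitivity-identity} concerns $\mathcal L_f$ as defined by the integral.
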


\begin{proof}
The endpoint identity has just been proved. Compact support follows from the private-tube construction and the compact support of \(H-\Id\). Smooth zero extension and the correction norms are verified in Appendix~\ref{app:zero-extension}.
\end{proof}

Figure~\ref{fig:multipoint-schematic} summarizes the private-tube localization and the resulting independent endpoint responses.

\begin{figure}[!ht]
\centering
\includegraphics[width=.8\textwidth]{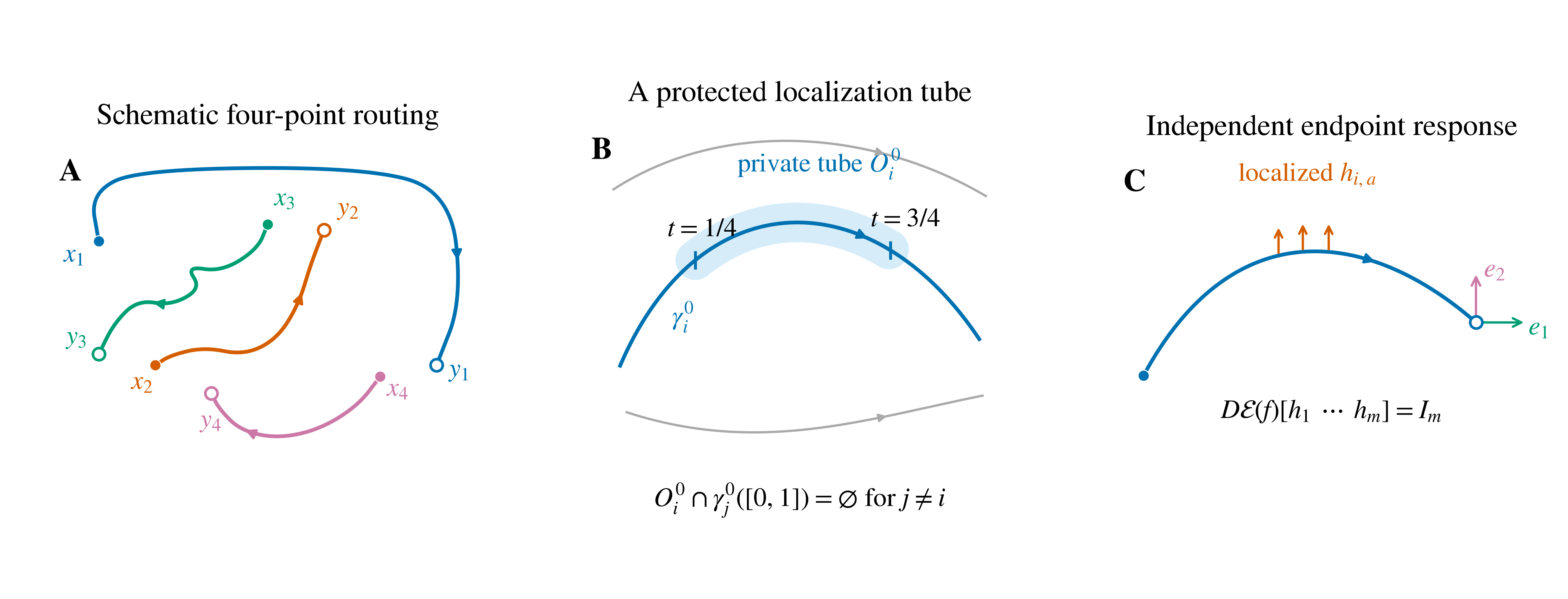}
\caption{Localized corrections for the smooth reference flow. (A) Marked reference paths join the prescribed inputs and targets. (B) In standard coordinates, a tube around the middle part of one moving path meets no other marked path during the unit time interval. (C) After transport and preconditioning, the correction fields give independent first-order endpoint changes and the identity response matrix in \eqref{eq:ideal-sensitivity-identity}; two response directions are shown schematically. Stationary samples use local bumps instead of tubes.}
\label{fig:multipoint-schematic}
\end{figure}

\subsection{Stability and Brouwer-degree correction}
\label{sec:stability-degree}

\subsubsection{Uniform endpoint stability}

\begin{lemma}[Uniform endpoint stability on a parameter ball]
\label{lem:uniform-endpoint-stability}
Let \(f,g_1,\ldots,g_m\) be locally Lipschitz and put
\(G_\alpha:=\sum_{\ell=1}^m\alpha_\ell g_\ell\).
Assume that, for \(|\alpha|\le r\), all trajectories of \(f+G_\alpha\) issued from \(x_1,\ldots,x_M\) exist on \([0,1]\) and remain in \(\overline\B_{R_0}\). Fix any \(R>R_0\). Then there are \(\eta_0>0\) and \(C>0\) such that, whenever \(V\) is locally Lipschitz, every field \(V+G_\alpha\), \(|\alpha|\le r\), has a global flow, and
\[
\norm{V-f}_{C(\overline\B_R)}\le\eta\le\eta_0,
\]
one has
\[
\sup_{|\alpha|\le r}
\abs{\End(V+G_\alpha)-\End(f+G_\alpha)}
\le C\eta.
\]
\end{lemma}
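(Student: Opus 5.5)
The plan is a Grönwall comparison argument that uses the Lipschitz constant of the \emph{reference} family $f+G_\alpha$ on the larger ball, never that of $V$. Set $\rho:=R-R_0>0$. On the compact set $\overline\B_R$, the fields $f,g_1,\ldots,g_m$ are Lipschitz, so there is one constant $L$ such that
\[
\Lip_{\overline\B_R}(f+G_\alpha)\le \Lip_{\overline\B_R}(f)+r\sum_{\ell=1}^m\Lip_{\overline\B_R}(g_\ell)=:L
\qquad\text{for all }|\alpha|\le r .
\]
This uniformity in $\alpha$ is the only place where the boundedness of the parameter ball is used. We will then choose $C:=e^{L}$ and $\eta_0:=\rho e^{-L}/2$ (any $\eta_0$ with $C\eta_0<\rho$ would do).

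Fix $|\alpha|\le r$ and a sample index $i$. Let $\gamma(t):=\Phi_{f+G_\alpha}^t(x_i)$, which stays in $\overline\B_{R_0}$ on $[0,1]$ by hypothesis. Let $\beta(t):=\Phi_{V+G_\alpha}^t(x_i)$, which exists globally by assumption. Define the first exit time $\tau:=\sup\{t\in[0,1]:\beta([0,t])\subset\overline\B_R\}$; it is positive because $|x_i|\le R_0<R$. For $t\le\tau$ both curves lie in $\overline\B_R$, so writing $V+G_\alpha=(f+G_\alpha)+(V-f)$ gives
\[
|\beta(t)-\gamma(t)|\le\int_0^t\Bigl(L|\beta(s)-\gamma(s)|+\eta\Bigr)\dd s .
\]
Grönwall's inequality then yields $|\beta(t)-\gamma(t)|\le \eta t e^{Lt}\le \eta e^{L}$ on $[0,\tau]$.

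The main, though mild, obstacle is this exit-time bootstrap, which is needed because nothing is assumed about $V$ outside $\overline\B_R$. Suppose $\tau<1$. By continuity $|\beta(\tau)|=R$. But the Grönwall bound at $t=\tau$ gives $|\beta(\tau)|\le R_0+\eta e^{L}\le R_0+\rho/2<R$, a contradiction. Hence $\tau=1$ and the estimate holds on all of $[0,1]$.

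Taking $t=1$ and summing over the $M$ samples in the Euclidean norm on $\R^m$ gives
\[
|\End(V+G_\alpha)-\End(f+G_\alpha)|\le \sqrt M\,e^{L}\eta .
\]
The constant $\sqrt M\,e^L$ does not depend on $\alpha$ or $V$, so we set $C:=\sqrt M e^L$ (with $\eta_0$ as above) and take the supremum over $|\alpha|\le r$. No Lipschitz information about $V$ is used: only its local Lipschitz regularity, for uniqueness, and its $C^0$ closeness to $f$ on $\overline\B_R$, which matches the paper's emphasis that uniform approximation suffices.
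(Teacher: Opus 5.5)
Your proposal is correct and takes essentially the same approach as the paper: a common Lipschitz constant for $f+G_\alpha$ on $\overline\B_R$, Gr\"onwall applied to the trajectory of $V+G_\alpha$ stopped at its first exit from the ball, a choice of $\eta_0$ with $C_0\eta_0<R-R_0$ that rules out the exit, and stacking of the $M$ endpoint blocks. The only blemish is cosmetic: you first announce $C:=e^{L}$ and later correctly replace it with $\sqrt M\,e^{L}$ after stacking.
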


\begin{proof}
For each initial point and parameter \(|\alpha|\le r\), stop the trajectory of \(V+G_\alpha\) at its first exit from \(\B_R\). On the stopped interval, subtract the two integral equations. Since only finitely many locally Lipschitz fields \(g_\ell\) occur and \(|\alpha|\le r\), the family \(f+G_\alpha\) has a common Lipschitz constant on \(\overline\B_R\). Gronwall's inequality therefore gives
\[
\sup_{0\le t\le1}|z_{V,\alpha}(t)-z_{f,\alpha}(t)|
\le C_0\eta
\]
uniformly in \(\alpha\) and in the finitely many initial points, as long as the stopped trajectory is defined. Choose \(\eta_0\) so that \(C_0\eta_0<R-R_0\). Then a first exit from \(\B_R\) is impossible, so the estimate holds on the whole interval \([0,1]\). Evaluating at time one and stacking the \(M\) endpoint blocks proves the claim.
\end{proof}

\subsubsection{Brouwer-degree correction}

\begin{lemma}[Degree correction]
\label{lem:degree-exactification}
Let \(F:\overline\B_r^m\to\R^m\) be continuous, let \(J\in\R^{m\times m}\) be invertible, and set \(\sigma:=\min_{|v|=1}|Jv|\).  If
\[
\sup_{|\alpha|=r}|F(\alpha)-J\alpha|<\sigma r,
\]
then some \(\alpha_*\in\B_r^m\) satisfies \(F(\alpha_*)=0\).
\end{lemma}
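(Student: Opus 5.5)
The plan is a standard linear homotopy combined with homotopy invariance of the Brouwer degree. I will consider the map \(\Gamma(\lambda,\alpha):=(1-\lambda)J\alpha+\lambda F(\alpha)\) for \(\lambda\in[0,1]\) and \(\alpha\in\overline\B_r^m\). This map is jointly continuous. The first step is to check that it never vanishes on the boundary sphere \(|\alpha|=r\). Writing \(\Gamma(\lambda,\alpha)=J\alpha+\lambda(F(\alpha)-J\alpha)\), we have for \(|\alpha|=r\)
\[
|\Gamma(\lambda,\alpha)|\ge |J\alpha|-|F(\alpha)-J\alpha|\ge \sigma r-\sup_{|\beta|=r}|F(\beta)-J\beta|>0,
\]
since \(|J\alpha|\ge\sigma|\alpha|=\sigma r\) by definition of \(\sigma\). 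Invertibility of \(J\) gives \(\sigma>0\). The sphere is compact, so the supremum is attained or at least strictly below \(\sigma r\), and the strict inequality is uniform.

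The second step applies homotopy invariance. It gives
\[
\deg(F,\B_r^m,0)=\deg(J,\B_r^m,0)=\operatorname{sign}\det J=\pm1\neq0 .
\]
The degree of the linear map is computed at its unique zero \(\alpha=0\), which lies in the open ball and is regular. By the solution property of the degree, \(F\) then has a zero \(\alpha_*\in\B_r^m\). The boundary estimate already excludes zeros on the sphere, which is consistent with \(\alpha_*\) lying in the open ball.

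If I want to avoid degree theory, I will instead reduce to Brouwer's fixed-point theorem. Suppose \(F\) has no zero in \(\overline\B_r^m\). Define \(\Lambda(\alpha):=-rJ^{-1}F(\alpha)/|J^{-1}F(\alpha)|\), a continuous self-map of the ball, and take a fixed point \(\alpha\) of \(\Lambda\). Then \(|\alpha|=r\) and \(J^{-1}F(\alpha)=-c\alpha\) with \(c>0\), so \(F(\alpha)=-cJ\alpha\). This gives
\[
|F(\alpha)-J\alpha|=(1+c)|J\alpha|\ge\sigma r,
\]
which contradicts the hypothesis.

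There is no serious obstacle here. The only points needing care are the strictness of the boundary inequality and the observation that \(\sigma>0\), which ensures admissibility of the homotopy.
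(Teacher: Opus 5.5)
Your homotopy argument is correct and is essentially the paper's proof: you use the same straight-line homotopy $J\alpha+\lambda(F(\alpha)-J\alpha)$, the same boundary bound $|J\alpha|-|F(\alpha)-J\alpha|>0$, and the same conclusion $\deg(F,\B_r^m,0)=\operatorname{sgn}\det J\neq0$. Your fixed-point alternative via $\Lambda(\alpha)=-rJ^{-1}F(\alpha)/|J^{-1}F(\alpha)|$ is also valid and avoids degree theory entirely; the boundary estimate, which you already have, then rules out a zero on the sphere and places it in the open ball.
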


\begin{proof}
For \(0\le\lambda\le1\), set
\[
\mathscr H_\lambda(\alpha)
:=J\alpha+\lambda\bigl(F(\alpha)-J\alpha\bigr).
\]
On \(|\alpha|=r\),
\[
|\mathscr H_\lambda(\alpha)|
\ge |J\alpha|-|F(\alpha)-J\alpha|>0.
\]
Thus the homotopy does not meet the origin on the boundary, and
\[
\deg(F,\B_r^m,0)
=\deg(J,\B_r^m,0)
=\operatorname{sgn}(\det J)\ne0.
\]
The existence property of Brouwer degree gives a zero of \(F\) in \(\B_r^m\).
\end{proof}

\subsubsection{The endpoint-correction principle}

\begin{lemma}[Local endpoint expansion]
\label{lem:endpoint-frame-linearization}
Suppose that
\[
\End(f)=Y,
\qquad
[\mathcal L_f(h_1)\ \cdots\ \mathcal L_f(h_m)]=I_m,
\]
with \(f,h_1,\ldots,h_m\in C_c^\infty(\R^N;\R^N)\).
Then there exist \(R>0\) and \(\varepsilon_0>0\) such that, whenever
\(g_1,\ldots,g_m\) are locally Lipschitz and
\[
\max_\ell \norm{g_\ell-h_\ell}_{C(\overline\B_R)}<\varepsilon_0,
\]
the matrix
\[
J:=[\mathcal L_f(g_1)\ \cdots\ \mathcal L_f(g_m)]
\]
is invertible. Writing \(G_\alpha:=\sum_{\ell=1}^m\alpha_\ell g_\ell\),
there exist \(r_0,C>0\) such that, for \(|\alpha|\le r_0\),
\[
\End(f+G_\alpha)=Y+J\alpha+R(\alpha),
\qquad
|R(\alpha)|\le C|\alpha|^2.
\]
\end{lemma}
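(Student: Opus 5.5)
The plan is to linearize along the reference trajectories \(\gamma_i(t)=\Phi_f^t(x_i)\) using only the Lipschitz regularity of the \(g_\ell\). The smoothness of \(f\) supplies a second-order Taylor bound, and the variational calculation of Section~\ref{sec:endpoint-frame-module} is done at \(f\) alone. Because \(f\in C_c^\infty\) has a global flow, the compact set \(\bigcup_i\gamma_i([0,1])\) lies in some ball \(\B_{R_1}\). I would take \(R:=R_1+1\). Let \(\Lambda:=\max_i\sup_{0\le s\le t\le1}\norm{U_i(t,s)}_\op\); this is finite since \(Df\) is bounded.

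\textbf{Invertibility of \(J\).} By linearity of \(\mathcal L_f\),
\[
|\mathcal L_f(g_\ell)_i-\mathcal L_f(h_\ell)_i|\le\Lambda\norm{g_\ell-h_\ell}_{C(\overline\B_R)}<\Lambda\varepsilon_0 .
\]
This uses only the values of \(g_\ell-h_\ell\) on the curves \(\gamma_i\subset\B_R\). Hence \(\norm{J-I_m}_\op\le\sqrt{mM}\,\Lambda\varepsilon_0\), since each column has \(M\) blocks and there are \(m\) columns. Choose \(\varepsilon_0\) so that this bound is at most \(1/2\). A Neumann series then shows that \(J\) is invertible with \(\norm{J^{-1}}_\op\le2\).

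\textbf{Quadratic remainder.} Now fix such \(g_\ell\), and let \(L_g\) be a common Lipschitz constant and \(A_g\) a common sup bound for the \(g_\ell\) on \(\overline\B_R\). Let \(z_\alpha^i\) be the trajectory of \(f+G_\alpha\) from \(x_i\), stopped at its first exit from \(\B_R\), and set \(w:=z_\alpha^i-\gamma_i\). Gronwall applied to the integral equations gives \(|w(t)|\le C_1|\alpha|\) on the stopped interval, with \(C_1\) depending on \(\Lip(f)\), \(A_g\) and \(m\). Choose \(r_0\) with \(C_1r_0<1\). Then the trajectory cannot leave \(\B_R\), so it exists on \([0,1]\) and the bound holds there. (The statement implicitly needs only this local existence, which the argument itself provides.)

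Next write
\[
\dot w=Df(\gamma_i)w+Q(t)+G_\alpha(\gamma_i)+\bigl[G_\alpha(z_\alpha^i)-G_\alpha(\gamma_i)\bigr],
\]
where \(|Q|\le\tfrac12\norm{D^2f}_\infty|w|^2\) by Taylor's theorem. The bracket is bounded by \(\sqrt m\,|\alpha|L_g|w|\le C|\alpha|^2\); this is the point where only the Lipschitz property of the \(g_\ell\) is used, and no differentiability of \(\End\) at non-smooth fields is needed. Duhamel's formula with \(w(0)=0\) gives
\[
w(1)=\int_0^1U_i(1,t)G_\alpha(\gamma_i(t))\dd t+O(|\alpha|^2)=(J\alpha)_i+O(|\alpha|^2).
\]
The middle integral equals \(\sum_\ell\alpha_\ell\mathcal L_f(g_\ell)_i\) by linearity. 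The error is bounded by \(\Lambda\bigl(\tfrac12\norm{D^2f}_\infty C_1^2+\sqrt m L_gC_1\bigr)|\alpha|^2\). Since \(\gamma_i(1)=y_i\), stacking the \(M\) blocks yields \(\End(f+G_\alpha)=Y+J\alpha+R(\alpha)\) with \(|R(\alpha)|\le C|\alpha|^2\).

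\textbf{Main obstacle.} The delicate point is to avoid assuming differentiability of \(\alpha\mapsto\End(f+G_\alpha)\) when the \(g_\ell\) are merely Lipschitz, for example ReLU fields. The cross term \(G_\alpha(z)-G_\alpha(\gamma)\) must be treated as a product of the two small quantities \(|\alpha|\) and \(|w|\), rather than linearized. The a priori bound \(|w|=O(|\alpha|)\) from the stopped-trajectory Gronwall argument is what makes this product second order. The constants \(r_0\) and \(C\) may depend on the chosen \(g_\ell\), which the statement permits.
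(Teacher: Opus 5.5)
Your proposal is correct and follows essentially the same route as the paper. Both arguments use a margin ball around the reference trajectories, continuity of $g\mapsto\mathcal L_f(g)$ in the uniform norm for invertibility of $J$, a stopped-trajectory Gronwall bound $|z_i^\alpha-\gamma_i|=O(|\alpha|)$ that also yields existence on $[0,1]$, and a Taylor-plus-Lipschitz treatment of the second-order terms. The differences are cosmetic: you make the invertibility quantitative with a Neumann series, and you apply Duhamel's formula directly to $z_i^\alpha-\gamma_i$, whereas the paper subtracts the linearized solution $\zeta_i^\alpha$ and runs a second Gronwall estimate.
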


\begin{proof}
	Choose \(R\) so that all reference trajectories lie in \(\B_{R-2}\).
	The map \(h\mapsto\mathcal L_f(h)\) is continuous in the uniform norm
	on \(\overline\B_R\), hence \(J\) remains invertible if
	\(g_\ell\) are sufficiently close to \(h_\ell\).
	
	Fix such \(g_\ell\), and set
	\[
	A_g:=\left(\sum_{\ell=1}^m
	\norm{g_\ell}_{C(\overline\B_R)}^2\right)^{1/2},
	\qquad
	L_g:=\left(\sum_{\ell=1}^m
	\Lip_{\overline\B_R}(g_\ell)^2\right)^{1/2}.
	\]
	Both constants are finite. Let \(z_i^\alpha\) be the trajectory of
	\(f+G_\alpha\) issued from \(x_i\), stopped at its first exit from
	\(\B_{R-1}\), and write
	\(e_i^\alpha:=z_i^\alpha-\gamma_i\). On the stopped interval,
	Cauchy--Schwarz gives
	\[
	|G_\alpha(z)|\le |\alpha|A_g,\qquad
	|G_\alpha(z)-G_\alpha(z')|
	\le |\alpha|L_g|z-z'|.
	\]
	Thus, with \(L_f:=\Lip_{\overline\B_R}(f)\), the integral equations
	and Gronwall's inequality give
	\[
	\norm{e_i^\alpha}_{C([0,t])}
	\le A_g e^{L_f}|\alpha|,
	\qquad 0\le t\le1.
	\]
	After decreasing \(r_0\), the right-hand side is \(<1\), so the
	stopped trajectory cannot reach \(\partial\B_{R-1}\). Consequently
	every \(z_i^\alpha\) exists on \([0,1]\), remains in
	\(\B_{R-1}\), and satisfies the displayed estimate.

	Let \(\zeta_i^\alpha\) solve
	\[
	\dot\zeta_i^\alpha
=
Df(\gamma_i)\zeta_i^\alpha+G_\alpha(\gamma_i),
	\qquad
	\zeta_i^\alpha(0)=0.
	\]
	Variation of constants first gives
	\(\norm{\zeta_i^\alpha}_{C([0,1])}\le C_1|\alpha|\).
	For
	\(w_i^\alpha:=e_i^\alpha-\zeta_i^\alpha\), subtraction of the two
	equations yields
	\[
	\dot w_i^\alpha
	=Df(\gamma_i)w_i^\alpha
	+\bigl[f(\gamma_i+e_i^\alpha)-f(\gamma_i)
	-Df(\gamma_i)e_i^\alpha\bigr]
	+\bigl[G_\alpha(\gamma_i+e_i^\alpha)-G_\alpha(\gamma_i)\bigr].
	\]
	Since \(f\) is \(C^2\) on \(\overline\B_R\), Taylor's formula and
	the preceding bounds imply, uniformly in \(i\) and \(t\),
	\[
	\left|f(\gamma_i+e_i^\alpha)-f(\gamma_i)
	-Df(\gamma_i)e_i^\alpha\right|
	\le \tfrac12\norm{D^2f}_{C(\overline\B_R)}
	|e_i^\alpha|^2=O(|\alpha|^2),
	\]
	and
	\[
	|G_\alpha(\gamma_i+e_i^\alpha)-G_\alpha(\gamma_i)|
	\le |\alpha|L_g|e_i^\alpha|=O(|\alpha|^2).
	\]
	A second application of Gronwall's inequality therefore gives
	\[
	\norm{z_i^\alpha-\gamma_i-\zeta_i^\alpha}_{C([0,1])}
	=\norm{w_i^\alpha}_{C([0,1])}
	\le C_2|\alpha|^2.
	\]
By variation of constants,
\[
\zeta_i^\alpha(1)
=
\sum_{\ell=1}^m
\alpha_\ell\,\mathcal L_f(g_\ell)_i.
\]
Stacking the endpoint blocks and using \(\End(f)=Y\) yields
\[
\End(f+G_\alpha)=Y+J\alpha+O(|\alpha|^2),
\]
which proves the claim.
\end{proof}

\begin{proposition}[Robust endpoint correction]
\label{prop:robust-exactification}
Under the assumptions of Lemma~\ref{lem:endpoint-frame-linearization},
there exist \(R>0\) and \(\varepsilon_0>0\) such that, for every family
\(g_1,\ldots,g_m\) satisfying
\[
\max_\ell\norm{g_\ell-h_\ell}_{C(\overline\B_R)}<\varepsilon_0,
\]
there are \(r,\eta_0>0\) with the following property: if
\[
\norm{V_{\rm base}-f}_{C(\overline\B_R)}<\eta_0
\]
and every
\[
V_{\rm base}+\sum_{\ell=1}^m\alpha_\ell g_\ell,
\qquad |\alpha|\le r,
\]
has a global flow, then some \(\alpha_*\in\B_r^m\) satisfies
\[
\End\!\left(
V_{\rm base}+\sum_{\ell=1}^m\alpha_{*,\ell}g_\ell
\right)=Y.
\]
\end{proposition}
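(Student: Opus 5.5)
The plan is to combine Lemma~\ref{lem:endpoint-frame-linearization}, Lemma~\ref{lem:uniform-endpoint-stability} and Lemma~\ref{lem:degree-exactification}, in that order. We apply them to the map
\[
F(\alpha):=\End\Bigl(V_{\rm base}+\sum_\ell\alpha_\ell g_\ell\Bigr)-Y .
\]

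\textbf{Constants from the linearization.} Take \(R_1\) and \(\varepsilon_0\) from Lemma~\ref{lem:endpoint-frame-linearization}. All reference trajectories lie in \(\B_{R_1-2}\), and the perturbed trajectories for \(|\alpha|\le r_0\) stay in \(\B_{R_1-1}\). Set \(R:=R_1\), so that \(R_0:=R_1-1<R\) is available for Lemma~\ref{lem:uniform-endpoint-stability}.

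\textbf{Choice of \(r\).} Fix \(g_1,\ldots,g_m\) with \(\max_\ell\norm{g_\ell-h_\ell}_{C(\overline\B_R)}<\varepsilon_0\). The lemma then gives an invertible \(J\), together with \(r_0\) and \(C\) such that \(\End(f+G_\alpha)=Y+J\alpha+R(\alpha)\) and \(|R(\alpha)|\le C|\alpha|^2\) for \(|\alpha|\le r_0\). Let \(\sigma:=\min_{|v|=1}|Jv|>0\). Choose \(r\le r_0\) with \(Cr^2\le\sigma r/3\), that is, \(r\le\min\{r_0,\sigma/(3C)\}\). Then on \(|\alpha|=r\),
\[
|\End(f+G_\alpha)-Y-J\alpha|\le\sigma r/3 .
\]

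\textbf{Choice of \(\eta_0\).} For \(|\alpha|\le r\), the trajectories of \(f+G_\alpha\) from the \(x_i\) exist on \([0,1]\) and remain in \(\overline\B_{R_0}\). Lemma~\ref{lem:uniform-endpoint-stability}, applied to \(f\) and the fixed \(g_\ell\) with the parameter ball of radius \(r\), gives constants \(\eta_0'\) and \(C'\). Whenever \(V_{\rm base}+G_\alpha\) has a global flow for every \(|\alpha|\le r\) and \(\norm{V_{\rm base}-f}_{C(\overline\B_R)}\le\eta\le\eta_0'\), we get
\[
\sup_{|\alpha|\le r}|\End(V_{\rm base}+G_\alpha)-\End(f+G_\alpha)|\le C'\eta .
\]
Set \(\eta_0:=\min\{\eta_0',\sigma r/(3C')\}\). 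If \(\norm{V_{\rm base}-f}<\eta_0\), this difference is strictly below \(\sigma r/3\).

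\textbf{Degree step.} \(F\) is continuous on \(\overline\B_r^m\), since endpoints depend continuously on the parameter for locally Lipschitz fields with global flows. This follows from the same Gronwall estimate on a compact set, because the \(g_\ell\) are fixed. On \(|\alpha|=r\) the two bounds combine to
\[
|F(\alpha)-J\alpha|<\tfrac{2}{3}\sigma r<\sigma r .
\]
Lemma~\ref{lem:degree-exactification} then yields \(\alpha_*\in\B_r^m\) with \(F(\alpha_*)=0\), which is the claim.

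\textbf{Main obstacle.} The delicate point is the order of quantifiers. The \(g_\ell\), and with them \(J\), \(\sigma\), \(C\), \(r\), must be fixed before \(\eta_0\) is chosen. Only then does the constant \(C'\) of Lemma~\ref{lem:uniform-endpoint-stability} stay independent of \(V_{\rm base}\): it depends on \(f\), the \(g_\ell\) and \(r\) only, through a common Lipschitz constant on \(\overline\B_R\). This ordering is also why no derivative control on \(V_{\rm base}\) is needed.

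A second check is that the stability lemma uses the same radius \(R\) as the approximation hypothesis. This holds because \(R_0=R_1-1<R_1=R\). If needed, one can enlarge \(R\) in the statement, since a larger ball only strengthens the smallness hypotheses.
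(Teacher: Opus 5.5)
Your proposal is correct and follows the paper's route: fix the approximate frame $g_\ell$, use Lemma~\ref{lem:endpoint-frame-linearization} to make the quadratic remainder a small fraction of $\sigma r$ on $|\alpha|=r$, choose $\eta_0$ via Lemma~\ref{lem:uniform-endpoint-stability} so that the base-field error is another small fraction, and conclude with Lemma~\ref{lem:degree-exactification} (the paper uses $\sigma/4$ where you use $\sigma/3$). Your explicit check that the perturbed trajectories stay in $\B_{R-1}$, so that the stability lemma applies with $R_0=R-1<R$, spells out a hypothesis the paper's proof uses only implicitly.
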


The robustness statement uses two continuity facts.  First, the endpoint-response operator $g\mapsto\mathcal L_f(g)$ is continuous with respect to uniform perturbations on $\overline\B_R$, so sufficiently accurate approximations of the directions $h_\ell$ preserve invertibility of the endpoint-response matrix.  Second, once $g_1,\ldots,g_m$ are fixed, Lemma~\ref{lem:uniform-endpoint-stability} gives uniform continuous dependence of $\End(V_{\rm base}+G_\alpha)$ on the base field for $|\alpha|\le r$.  Consequently, the admissible radii and tolerances may depend on the chosen approximate frame, in particular through its local Lipschitz bounds, but no derivative approximation of the base field is required.

\begin{proof}
Fix \(g_1,\ldots,g_m\) and let
\[
J=[\mathcal L_f(g_1)\ \cdots\ \mathcal L_f(g_m)],
\qquad
\sigma:=\min_{|v|=1}|Jv|>0.
\]
By Lemma~\ref{lem:endpoint-frame-linearization}, after choosing \(r>0\)
small enough,
\[
|\End(f+G_\alpha)-Y-J\alpha|
\le \frac{\sigma}{4}|\alpha|,
\qquad |\alpha|\le r.
\]
Lemma~\ref{lem:uniform-endpoint-stability} then gives
\(\eta_0>0\) such that
\[
\sup_{|\alpha|\le r}
|\End(V_{\rm base}+G_\alpha)-\End(f+G_\alpha)|
\le \frac{\sigma r}{4}
\]
whenever
\(\norm{V_{\rm base}-f}_{C(\overline\B_R)}<\eta_0\).

Set
\[
F(\alpha):=\End(V_{\rm base}+G_\alpha)-Y.
\]
For the fixed correction fields and base field, \(F\) is continuous on the closed parameter ball: the vector fields depend continuously on the coefficients, have a common local Lipschitz bound there, and their relevant trajectories remain in the common trapping ball.
For \(|\alpha|=r\),
\[
|F(\alpha)-J\alpha|
\le \frac{\sigma r}{2}
<\sigma r.
\]
Lemma~\ref{lem:degree-exactification} therefore yields
\(\alpha_*\in\B_r^m\) with \(F(\alpha_*)=0\), proving the result.
\end{proof}

Figure~\ref{fig:proof-synopsis} illustrates the residual-enclosure argument underlying the degree step.

\begin{figure}[!ht]
\centering
\includegraphics[width=.8\textwidth]{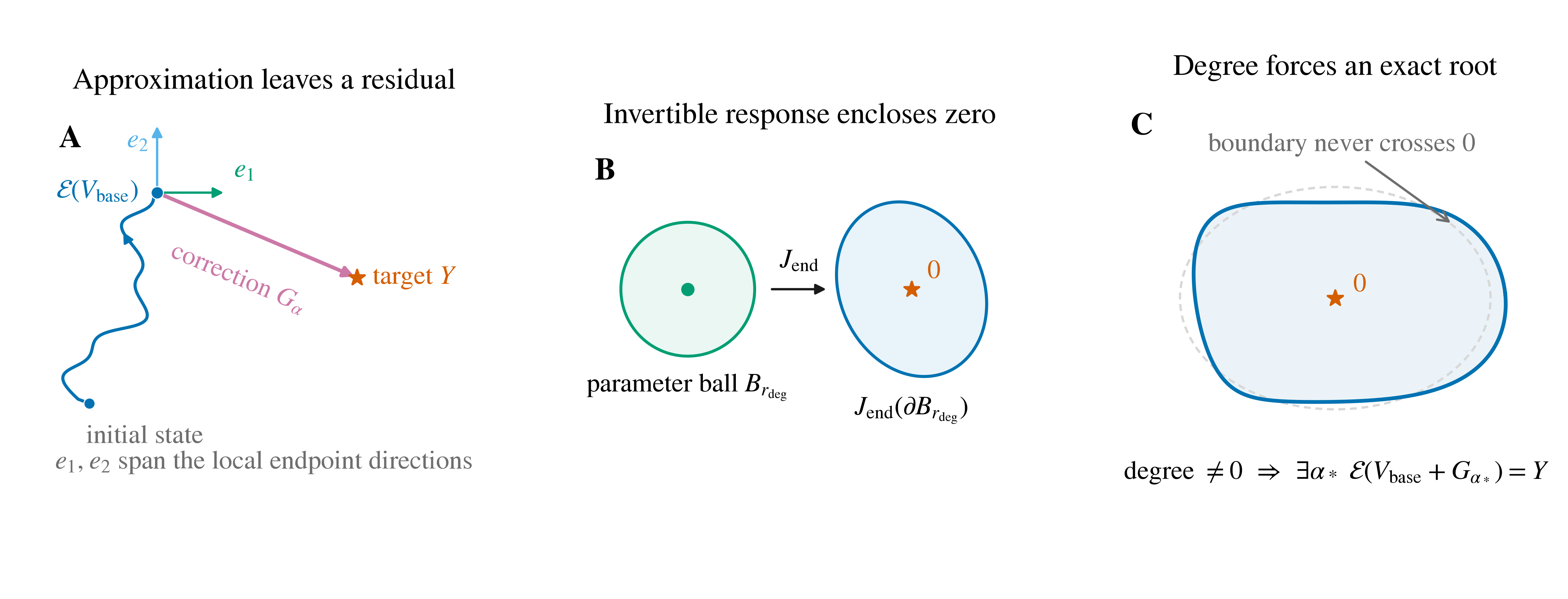}
\caption{The degree argument in endpoint space, shown in two dimensions. Let $F(\alpha)$ be the stacked endpoint error after adding the correction with coefficients $\alpha$. The linear response $J$ maps the parameter sphere to a boundary separated from zero. The estimate $\|F(\alpha)-J\alpha\|<\sigma r$ on $|\alpha|=r$, where $\sigma$ is the smallest singular value of $J$, keeps the straight homotopy between $F$ and $J$ away from zero on that boundary. Since $J$ is invertible, the degree is nonzero and $F$ vanishes for a coefficient vector inside the ball.}
\label{fig:proof-synopsis}
\end{figure}

\subsection{Proof of the approximation-to-interpolation theorem}
\label{sec:proof-main-theorem}

\begin{proof}[Proof of Theorem~\ref{thm:abstract-exactification}]
\textbf{Step 1: reference field.} It is enough to consider \(T=1\), since linearity of \(\mathscr F\) implies that \(V\in\mathscr F\) entails \(T^{-1}V\in\mathscr F\), and \(\Phi_{T^{-1}V}^{T}=\Phi_V^1\). Proposition~\ref{prop:smooth-reference-module} provides a compactly supported smooth field \(f\) with \(\End(f)=Y\).

\textbf{Step 2: correction fields.} Proposition~\ref{prop:endpoint-frame-module} provides compactly supported smooth fields \(h_1,\ldots,h_m\) with identity first-order endpoint response. Proposition~\ref{prop:robust-exactification} supplies a correction tolerance \(\varepsilon_0\) on a ball \(\overline\B_R\).

\textbf{Step 3: approximation in the prescribed class.} First use the approximation-on-bounded-sets hypothesis to choose \(g_1,\ldots,g_m\in\mathscr F\) within this tolerance. For these fixed approximate correction fields, Proposition~\ref{prop:robust-exactification} supplies \(r>0\) and \(\eta_0>0\). Use the same approximation property once more to choose \(V_{\rm base}\in\mathscr F\) with
\[
\norm{V_{\rm base}-f}_{C(\overline\B_R)}<\eta_0.
\]
\textbf{Step 4: exact correction.} Since \(\mathscr F\) is linear, every field
\[
V_{\rm base}+\sum_{\ell=1}^m\alpha_\ell g_\ell,
\qquad |\alpha|\le r,
\]
belongs to \(\mathscr F\), and by assumption it has a global flow. Proposition~\ref{prop:robust-exactification} therefore yields a parameter \(\alpha_*\) realizing all endpoints exactly. Rescaling time gives the conclusion for arbitrary \(T>0\).
\end{proof}

\begin{remark}[Qualitative exact interpolation beyond ReLU]

Theorem~\ref{thm:abstract-exactification} isolates the qualitative core of the positive result. It uses only linear closure, local Lipschitz regularity, global well-posedness of the candidate flows, and approximation of compactly supported smooth fields on bounded sets. In particular, neither piecewise linearity nor positive homogeneity of ReLU is required for exact finite interpolation itself; see also Remark~\ref{rem:beyond-relu} for the corresponding extension to other activation functions.

{\hbadness=10000\sloppy
	This separates two levels of the positive theory. Qualitative exact interpolation follows from approximation on bounded sets together with a finite-dimensional endpoint correction and Brouwer degree. By contrast, the quantitative width and normalized-strength estimates in Theorem~\ref{thm:quantitative-exactification} use specifically quantitative shallow-network approximation estimates and the positive homogeneity of ReLU. \par}
\end{remark}

\FloatBarrier
\section{Shallow ReLU realizations: qualitative and quantitative consequences}
\label{sec:quantitative}

\subsection{Qualitative consequence of the abstract theorem}
\label{sec:relu-qualitative-application}

\begin{proof}[Proof of Corollary~\ref{thm:qualitative-interpolation}]
The class $\NN_N$ is linear, every one of its fields is globally Lipschitz and hence globally well posed, and classical universal approximation for continuous nonpolynomial ridge activations gives the required coordinatewise approximation on every compact ball \cite{hornik1991,pinkus1999}. In particular, each compactly supported smooth vector field can be approximated there by a finite vector-valued ReLU ridge expansion. Theorem~\ref{thm:abstract-exactification} therefore applies. Positive time rescaling preserves $\NN_N$ by scalar closure.
\end{proof}

\subsection{Quantitative preliminaries}

Recall the fixed regularity order $s_N$ from \eqref{eq:fixed-regularity-order}.  The ReLU approximation rate uses the additional dimension-dependent exponent
\begin{equation}
\label{eq:relu-approx-exponent}
\kappa_N:=\frac{2N}{N+3}.
\end{equation}
Unlike $s_N$, which indexes the fixed-order regularity of the geometric construction, $\kappa_N$ enters only in the quantitative approximation argument below.

\subsubsection{Parameter conventions and normalized strength}

For a parameter list \(\theta=((w_j,a_j,b_j))_{j=1}^p\), write
\begin{equation}
\Str(\theta):=\sum_{j=1}^p\abs{w_j}\sqrt{\abs{a_j}^2+b_j^2},
\end{equation}
and, when a shallow field is equipped with this particular list, also write \(p(V_\theta):=p(\theta)\) and \(\Str(V_\theta):=\Str(\theta)\).  These are representation-dependent quantities.  The affine-normalized versions \(\Str_S(\theta)\) and \(\norm{\theta}_{S,2}\) were introduced in the quantitative refinement of Section~\ref{sec:results} solely to state the coordinate-invariant quantitative theorem.

Let \(S(x)=B(x-x_*)\) and let
$
\widetilde V(z):=BV(B^{-1}z+x_*)
$
be the conjugated field.  Directly from the shallow representation,
\begin{equation}
\label{eq:functional-strength}
\max\{\Lip(\widetilde V),|\widetilde V(0)|\}
\le\Str_S(\theta).
\end{equation}
Thus \(T\Str_S(\theta)\) is a genuine dynamical resource: it controls the logarithmic bi-Lipschitz distortion and the displacement generated by the normalized flow over time \(T\), not merely the number of parameters.

Positive homogeneity of ReLU also allows each unit to be balanced independently.  After zero units are discarded,
\begin{equation}
\label{eq:balancing}
\inf_{\lambda_1,\ldots,\lambda_p>0}
\norm{
((\lambda_j^{-1}w_j,\lambda_ja_j,\lambda_jb_j))_{j=1}^p
}_{S,2}^2
=2\Str_S(\theta).
\end{equation}
Consequently, \(2\Str_S(\theta)\) is the least squared normalized Euclidean parameter norm among unitwise positive-homogeneity rescalings representing the same vector field.  This separates the functional size of the realized vector field from an arbitrary choice of unit scaling.

\begin{remark}
The representation-dependent quantity \(\Str_S(\theta)\) therefore has two roles: it controls the conjugated vector field through \eqref{eq:functional-strength}, and after balancing it controls the normalized Euclidean parameter size through \eqref{eq:balancing}.  These facts are needed only for the quantitative ReLU realization and for the bounded-strength obstruction; they play no role in the qualitative density-to-exactness theorem.
\end{remark}

The coefficient-controlled ridge approximation below uses the exponent \(\kappa_N\) from \eqref{eq:relu-approx-exponent}, while the fixed order \(s_N\) from \eqref{eq:fixed-regularity-order} indexes the geometric derivative bounds in Appendix~\ref{app:finite-transitivity} and the quantitative endpoint-frame bridge.

\subsubsection{Coefficient-controlled ReLU approximation}

\begin{lemma}[Coefficient-controlled approximation of smooth vector fields]
\label{lem:smooth-vector-approximation}
Let \(s>(N+3)/2\) be an integer, let \(R\ge1\), and let \(u\in C_c^s(\R^N;\R^N)\). For every \(0<\varepsilon_{\rm app}\le1\), set
\begin{equation}
\label{eq:explicit-vector-budgets}
\begin{aligned}
\mathcal B_{R,s}(u)
&:=2C_{\mathrm{emb},N,s}(1+R)^s\norm u_{C^s},\\
n_{\rm app}(R,s;u,\varepsilon_{\rm app})
&:=
\begin{cases}
0,&u=0,\\[1mm]
\displaystyle
\left\lceil
\left(\frac{A_N\sqrt N\,\mathcal B_{R,s}(u)}{\varepsilon_{\rm app}}\right)^{\kappa_N}
\right\rceil,&u\ne0.
\end{cases}
\end{aligned}
\end{equation}
and
\begin{equation}
\label{eq:explicit-vector-width-strength}
\mathcal P_{R,s}(u,\varepsilon_{\rm app}):=Nn_{\rm app}(R,s;u,\varepsilon_{\rm app}),
\qquad
\mathcal S_{R,s}(u):=N\sqrt{1+R^{-2}}\,\mathcal B_{R,s}(u).
\end{equation}
Then there is a shallow ReLU field \(V_{\rm app}\) such that
\[
\norm{V_{\rm app}-u}_{C(\overline\B_R)}\le\varepsilon_{\rm app},
\qquad
p(V_{\rm app})\le\mathcal P_{R,s}(u,\varepsilon_{\rm app}),
\qquad
\Str(V_{\rm app})\le\mathcal S_{R,s}(u).
\]
\end{lemma}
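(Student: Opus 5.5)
The argument goes coordinate by coordinate, reducing the vector statement to a scalar coefficient-controlled approximation theorem for shallow ReLU networks. The structure of the constants points to the Klusowski--Barron / Bach type bound: a function with finite spectral (Barron) norm of order two can be approximated uniformly on a ball by $n$ ReLU ridge units, with error $O(n^{-1/2-1/N})$ and total outer weight controlled by that norm. Inverting gives $n\sim\varepsilon^{-\kappa_N}$, with $\kappa_N=2N/(N+3)$, which matches the exponent in \eqref{eq:explicit-vector-budgets}. Accordingly, I would treat the uniform-approximation rate as a black box with an absolute constant $A_N$. This is the standard result of Klusowski--Barron, or of Siegel--Xu via Maurey's empirical method combined with a dyadic refinement.

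First, fix a component $u_k$ of $u$ and bound its Barron seminorm $\int(1+|\omega|)^2|\widehat{u_k}(\omega)|\,\dd\omega$ by $C_{\mathrm{emb},N,s}\norm{u}_{C^s}$, up to the support scale. The plan is to apply Cauchy--Schwarz with weight $(1+|\omega|)^{-(2s-4)}$, which is integrable exactly when $2s-4>N$, i.e.\ $s>(N+4)/2$. This is close to, though not identical with, the stated hypothesis $s>(N+3)/2$. To reach the sharper threshold I would instead use a Sobolev $H^{s}$ bound with a finer dyadic decomposition, or accept that $C_{\mathrm{emb}}$ absorbs the gap via the rounding built into $s_N$. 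To make the bound depend only on $\norm{u}_{C^s}$ and $R$, I would multiply $u_k$ by a fixed cutoff equal to one on $\overline\B_R$ and rescale to the unit ball. The factor $(1+R)^s$ in $\mathcal B_{R,s}$ is exactly what this rescaling produces: after the substitution $x=Rx'$, derivatives of order $\le s$ pick up at most $R^s$. The localized function then has an $L^2$ norm over a ball of radius $\sim R$, which is controlled after rescaling. The leading factor $2$ accounts for the affine (degree-one) part of the Barron representation.

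Second, apply the scalar theorem on $\overline\B_1$ to the rescaled component with tolerance $\varepsilon_{\rm app}/\sqrt N$. This yields $n_{\rm app}$ units with $\sum_j|c_j|\sqrt{|a_j|^2+b_j^2}\le\mathcal B_{R,s}(u)$ after normalizing each unit so that $|a_j|^2+b_j^2=1$. Undoing the rescaling $x'=x/R$ turns a unit $(a'^\top x/R+b')_+$ into parameters $(a'/R,b')$. With $|a'|^2+b'^2=1$, this gives $\sqrt{|a|^2+b^2}\le\sqrt{1+R^{-2}}$ when $R\ge1$; the reverse normalization choice explains the factor $\sqrt{1+R^{-2}}$. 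Stacking the $N$ components with $w_j=c_je_k$ gives width at most $Nn_{\rm app}$ and strength at most $N\sqrt{1+R^{-2}}\,\mathcal B_{R,s}$. The uniform error of the whole field is then at most $\sqrt N\cdot\varepsilon_{\rm app}/\sqrt N$. If the scalar theorem leaves an affine remainder, I would write it as $\ell(x)=(\ell(x))_+-(-\ell(x))_+$, absorbing it into the width and strength budgets; the factor $2$ gives room for this. The case $u=0$ is trivial, with the empty list.

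The main obstacle is the first step: obtaining a clean Fourier/Barron bound from $C^s$ data with the exact threshold $s>(N+3)/2$ and constants in the stated form. The difficulty is that the $C^s$ norm does not directly control $\widehat u$ in weighted $L^1$. I would try first to route the estimate through $H^s$ on the localized function, using $\norm{\cdot}_{H^s}\lesssim R^{N/2}\norm{\cdot}_{C^s}$, and then absorb $R^{N/2}$ after rescaling to the unit ball, where the support volume is fixed. This should be checked carefully against the exponent $(1+R)^s$.
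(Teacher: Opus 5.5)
Your skeleton is the paper's: approximate each coordinate after the substitution $z=x/R$ to accuracy $\varepsilon_{\rm app}/\sqrt N$, concatenate, read the strength off inner parameters of norm at most $\sqrt{1+R^{-2}}$, and use the empty list for $u=0$. However, both analytic inputs fail as you state them, and neither failure can be absorbed into $A_N$ or $C_{\mathrm{emb},N,s}$.

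\textbf{The rate.} The bound $n^{-1/2-1/N}$ inverts to $n\sim\varepsilon^{-2N/(N+2)}$, not $\varepsilon^{-\kappa_N}$. Since $2N/(N+2)>\kappa_N=2N/(N+3)$, the Klusowski--Barron estimate cannot give $p(V_{\rm app})\le Nn_{\rm app}$ as $\varepsilon_{\rm app}\to0$, whatever $A_N$ is; a mismatch in the exponent is not a constant. The exponent $\kappa_N$ encodes the sharper uniform rate $n^{-1/\kappa_N}=n^{-1/2-3/(2N)}$. This is Siegel's theorem for the ReLU variation space, with the approximant taken in the $\ell^1$-ball so that $\sum_j|c_j|$ is bounded by the variation budget (Lemma~\ref{lem:coefficient-approximation}). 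Its hypothesis is a bound on the variation norm $\gamma_R$.

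\textbf{The smoothness threshold.} Your Cauchy--Schwarz estimate needs $s>(N+4)/2$. This excludes the admissible integers $s=(N+4)/2$ for even $N$, e.g.\ $N=2$, $s=3$. Routing through $H^s$ cannot close the gap: $\widehat\phi(\omega)=(1+|\omega|)^{-N-2}\log^{-1}(2+|\omega|)$ defines a function in $H^{N/2+2}$ with $\int(1+|\omega|)^2|\widehat\phi(\omega)|\dd\omega=\infty$. A divergent quantity is not absorbed by a constant, and appealing to the rounding in $s_N$ changes the hypothesis rather than proving the lemma. The missing idea is to bypass Fourier-side norms and embed $C^s$ directly into the ReLU variation space. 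Yang--Zhou's theorem does exactly this for $s>(N+3)/2$ (Lemma~\ref{lem:smoothness-variation}).

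The paper applies that theorem to the global extension $z\mapsto u^{(k)}(Rz)$, whose $C^s$ norm is at most $(1+R)^s\norm u_{C^s}$ by the chain rule, so no cutoff or $R^{N/2}$ bookkeeping arises. The factor $2$ in $\mathcal B_{R,s}(u)$ only makes the budget strict: $\gamma_R(u^{(k)})\le\frac12\mathcal B_{R,s}(u)<\mathcal B_{R,s}(u)$. There is no affine remainder in this framework, and your extra $\ell_+-(-\ell)_+$ units are in any case not counted in $Nn_{\rm app}$.

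Your fallback does give a weaker lemma, valid for $s>(N+4)/2$. That covers the paper's only use, $s=s_N\ge(N+6)/2$, but only once you replace the rate by Siegel's, using that the order-two Barron norm controls the ReLU variation norm.
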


This ReLU-specific approximation input is proved in Appendix~\ref{app:relu-approximation}.

\subsection{A quantitative endpoint-correction bridge}

The following proposition quantifies the compact region and tolerances left implicit in Proposition~\ref{prop:robust-exactification}; its proof is in Appendix~\ref{app:stability-degree}.

For a family \(\mathbf g=(g_1,\ldots,g_m)\) of globally Lipschitz vector fields, define the correction-family budget
\begin{equation}
\mathfrak B(\mathbf g)
:=
\left(
\sum_{\ell=1}^m
\bigl(|g_\ell(0)|^2+\Lip(g_\ell)^2\bigr)
\right)^{1/2}.
\end{equation}

\begin{proposition}[Quantitative endpoint-correction bridge]
\label{prop:quantitative-interface}
For every \(N\ge2\) there is \(C_N\ge2\), depending only on \(N\), with the following property.  Let \(\cD\) be an admissible unit-time dataset with
\[
\cV\subset\overline\B_{1/2},
\qquad
K:=|\cV|\ge2,
\qquad
\rho:=\sep(\cV)\in(0,1],
\qquad
m:=NM.
\]
The reference field and endpoint correction directions in Propositions~\ref{prop:smooth-reference-module} and~\ref{prop:endpoint-frame-module} can be chosen with \(R_{\rm tr}=8K+6\) so that
\[
\End(f)=Y,
\qquad
[\mathcal L_f(h_1)\ \cdots\ \mathcal L_f(h_m)]=I_m,
\qquad
\supp f\cup\bigcup_{\ell=1}^m\supp h_\ell\subset\B_{R_{\rm tr}}.
\]
There also exist an active approximation radius \(R_*\ge R_{\rm tr}\) and a frame tolerance \(0<\varepsilon_*\le1\), depending only on the normalized dataset, such that, with
\begin{equation}
\Gamma_N(K,\rho):=
\left(C_N\frac K\rho\right)^{C_NK},
\end{equation}
one has
\begin{equation}
\label{eq:bridge-geometry-outputs}
\max\left\{
R_*,\ \varepsilon_*^{-1},\
\norm f_{C^{s_N}},\
\max_{1\le\ell\le m}\norm{h_\ell}_{C^{s_N}}
\right\}
\le \Gamma_N(K,\rho).
\end{equation}

For every scalar budget \(\mathfrak b\ge1\), there are
\[
0<r_*=r_*(\cD,\mathfrak b)\le\frac18,
\qquad
0<\eta_*=\eta_*(\cD,\mathfrak b)\le1,
\]
satisfying
\begin{equation}
\label{eq:bridge-budget-envelope}
\max\{r_*^{-1},\eta_*^{-1}\}
\le
\Gamma_N(K,\rho)(1+\mathfrak b)^2,
\end{equation}
with the following property.  If globally Lipschitz fields \(g_1,\ldots,g_m\) and a locally Lipschitz field \(V_{\rm base}\) satisfy
\begin{equation}
\label{eq:bridge-hypotheses}
\max_{1\le\ell\le m}
\norm{g_\ell-h_\ell}_{C(\overline\B_{R_{\rm tr}})}
\le\varepsilon_*,
\qquad
\mathfrak B(\mathbf g)\le\mathfrak b,
\qquad
\norm{V_{\rm base}-f}_{C(\overline\B_{R_*})}\le\eta_*,
\end{equation}
and every field
\(
V_{\rm base}+\sum_{\ell=1}^m\alpha_\ell g_\ell
\), \(|\alpha|\le r_*\), has a global flow, then there exists \(\alpha_*\in\B_{r_*}^m\) such that
\begin{equation}
\label{eq:bridge-exactness}
\End\!\Big(V_{\rm base}+\sum_{\ell=1}^m\alpha_{*,\ell}g_\ell\Big)=Y.
\end{equation}
Moreover, every sample trajectory entering the endpoint homotopy for \(|\alpha|\le r_*\), both for \(f+\sum_\ell\alpha_\ell g_\ell\) and for \(V_{\rm base}+\sum_\ell\alpha_\ell g_\ell\), remains in \(\B_{R_*}\) for \(0\le t\le1\).
\end{proposition}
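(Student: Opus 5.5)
The proof runs through the qualitative argument of Section~\ref{sec:exactification} and tracks every constant in terms of $(K,\rho)$. It has three parts: geometric outputs, a quantitative local expansion, and quantitative stability followed by degree.

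\textbf{Geometric outputs.} Apply Lemma~\ref{lem:finite-transitivity} with $s=s_N$ and $R$ of order $4K+1$. The standard vertices $z_{\nu,k}$ lie in $\B_{4K+2}$ and have separation bounded below by an absolute multiple of $1/K$, since $n_\nu\le K$. The data vertices lie in $\overline\B_{1/2}$ with separation $\rho$. Hence $d_{\rm cfg}\gtrsim\rho/K$, and $H-\Id$ is supported in $\B_{8K+6}=\B_{R_{\rm tr}}$. The chain rule applied to $f=H_*f_0$ and to \eqref{eq:preconditioned-standard-frame} then bounds $\norm f_{C^{s_N}}$ and $\norm{h_\ell}_{C^{s_N}}$ by polynomials in the derivative bounds \eqref{eq:H-bound}, in $\norm{f_0}_{C^{s_N}}$, and in the private-tube cutoffs. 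The tube radius $\varepsilon_{\rm tube}$ is of order $1/K$, because distinct standard circles are separated and synchronized positions on one circle differ in angle by at least $2\pi/K$. Each of these quantities is $\le\Gamma_N(K,\rho)$. I would take $R_*:=R_{\rm tr}+2$.

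\textbf{Choice of $\varepsilon_*$.} Since $U_i=DH\,U_i^0\,DH^{-1}$ and $U_i^0$ is a rotation on the support of $f_0$, we get $\norm{U_i(1,t)}\le\norm{DH}\norm{DH^{-1}}\le\Gamma$. This avoids a Gronwall factor $e^{\Lip f}$; it is the rotation-coordinate device mentioned in the introduction. Then $|\mathcal L_f(g)-\mathcal L_f(h)|\le\sqrt M\,\Gamma\max_\ell\norm{g_\ell-h_\ell}$ on $\B_{R_{\rm tr}}$, and trajectories stay in $\B_{R_{\rm tr}}$ because supports are there. Choosing $\varepsilon_*:=1/(2m\sqrt M\,\Gamma)$ gives $\norm{J-I_m}\le1/2$, hence $\sigma\ge1/2$.

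\textbf{Quantitative expansion and stability.} Redo Lemma~\ref{lem:endpoint-frame-linearization} with explicit constants. Write perturbed trajectories as $z=H(\tilde z)$, so that in standard coordinates the unperturbed dynamics is an isometric rotation (or zero) and the perturbation $H^*G_\alpha$ has size at most $\Gamma\mathfrak b|\alpha|(1+R_*)$. The deviation is then $\lesssim\Gamma\mathfrak b|\alpha|$ without exponential loss. The quadratic remainder is bounded by $\Gamma(1+\mathfrak b)^2|\alpha|^2$, using $\norm{D^2f}$ and the Lipschitz part of $\mathfrak B(\mathbf g)$. Taking $r_*:=\min\{1/8,\,\sigma/(4\Gamma(1+\mathfrak b)^2)\}$ gives $|\End(f+G_\alpha)-Y-J\alpha|\le\sigma|\alpha|/4$ and confines the trajectories to $\B_{R_{\rm tr}+1}$. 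For the base-field step, redo Lemma~\ref{lem:uniform-endpoint-stability} in the same way. The difference $V_{\rm base}-f$ is at most $\eta$ on $\B_{R_*}$, and the comparison is made along the $f+G_\alpha$ trajectories. The linearized growth of $f+G_\alpha$ in standard coordinates is bounded by $\Gamma(1+\mathfrak b r_*)$, with at most a factor $e^{\Gamma\mathfrak b r_*}$, and this is $O(\Gamma)$ by the choice of $r_*$. This gives an error $\le\Gamma\eta$. Setting $\eta_*:=\sigma r_*/(4\Gamma)$ keeps the trajectories in $\B_{R_*}$ and bounds the boundary error by $\sigma r_*/4$.

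\textbf{Conclusion and main obstacle.} Lemma~\ref{lem:degree-exactification} then yields \eqref{eq:bridge-exactness}, and the envelope \eqref{eq:bridge-budget-envelope} follows after enlarging $C_N$. The main obstacle is the no-exponential-loss trajectory comparison in the last two steps. The naive Gronwall factor $e^{\Lip f}$ would be doubly exponential in $\Gamma$. I would first try to carry out all comparisons in the $H$-pulled-back frame and absorb the $DH$ and $DH^{-1}$ factors polynomially. If that fails, I would settle for a bound $e^{C\Gamma}$ and check whether it is still compatible with the stated form $(C_NK/\rho)^{C_NK}$; it probably is not, which is why the rotation frame is needed.
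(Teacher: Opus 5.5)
Your proposal is correct and is essentially the paper's proof in Appendix~\ref{app:stability-degree}. Pulling back by $H$ makes the Gronwall constant the $K$-independent $\norm{Df_0}_{C^0}$, so only polynomial factors in $\norm{DH}_\infty$, $\norm{DH^{-1}}_\infty$, $\norm{D^2H}_\infty$ and $\mathfrak b$ enter $r_*$ and $\eta_*$ before the degree step. The one slip is bookkeeping: the $C^{s_N}$ bounds for $f=H_*f_0$ and $h_\ell$ need $s_N+1$ derivatives of $H$ (Lemma~\ref{lem:finite-order-composition}(ii)), so you should apply Lemma~\ref{lem:finite-transitivity} at order at least $s_N+1$ rather than $s_N$ (the paper uses $s_N+3$).
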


\begin{proof}
See Appendix~\ref{app:stability-degree}.
\end{proof}

\subsection{Controlled shallow-ReLU realization in normalized coordinates}

The affine aspect ratio defined in Section~\ref{sec:results} removes overall scale and anisotropic coordinate distortion before the configuration is routed.  Assume first that \(K\ge2\).  By the definition of the infimum \(\mathfrak a(\cV)\), choose a \(2\)-near minimizer \(L_{\rm aff}\in\mathrm{GL}(N)\) such that
\[
\frac{\diam(L_{\rm aff}\cV)}{\sep(L_{\rm aff}\cV)}
\le 2\mathfrak a(\cV).
\]
Fix \(x_*\in\cV\) and put
\[
B:=\frac{L_{\rm aff}}{2\diam(L_{\rm aff}\cV)},
\qquad
S(x):=B(x-x_*).
\]
Then \(S(\cV)\subset\overline\B_{1/2}\), and with \(\rho:=\sep(S\cV)\),
\begin{equation}
\label{eq:rho-aspect}
\rho
=\frac{\sep(L_{\rm aff}\cV)}{2\diam(L_{\rm aff}\cV)}
\ge\frac{1}{4\mathfrak a(\cV)}.
\end{equation}
Work in these normalized coordinates and at unit time.  Take the reference field \(f\), endpoint directions \(h_1,\ldots,h_m\), radius \(R_*\), and frame tolerance \(\varepsilon_*\) supplied by Proposition~\ref{prop:quantitative-interface}.

Define the aggregate ReLU strength envelope of the ideal endpoint directions by
\begin{equation}
\mathcal S_h
:=\left(
\sum_{\ell=1}^m
\mathcal S_{R_{\rm tr},s_N}(h_\ell)^2
\right)^{1/2},
\qquad
\mathfrak b_h:=\max\{1,\sqrt2\,\mathcal S_h\}.
\end{equation}
Use the budget-dependent part of Proposition~\ref{prop:quantitative-interface} with \(\mathfrak b=\mathfrak b_h\), obtaining \(r_*\) and \(\eta_*\).  Now set
\begin{equation}
\mathcal P_h
:=\sum_{\ell=1}^m
\mathcal P_{R_{\rm tr},s_N}(h_\ell,\varepsilon_*),
\qquad
\mathcal P_f
:=\mathcal P_{R_*,s_N}(f,\eta_*),
\qquad
\mathcal S_f
:=\mathcal S_{R_*,s_N}(f).
\end{equation}
Lemma~\ref{lem:smooth-vector-approximation} gives shallow ReLU fields \(g_1,\ldots,g_m\) such that
\[
\max_\ell\norm{g_\ell-h_\ell}_{C(\overline\B_{R_{\rm tr}})}\le\varepsilon_*,
\qquad
\Str(g_\ell)\le\mathcal S_{R_{\rm tr},s_N}(h_\ell),
\]
and a shallow ReLU field \(V_{\rm base}\) such that
\[
\norm{V_{\rm base}-f}_{C(\overline\B_{R_*})}\le\eta_*,
\qquad
p(V_{\rm base})\le\mathcal P_f,
\qquad
\Str(V_{\rm base})\le\mathcal S_f.
\]
For each shallow ReLU field \(g\), equation~\eqref{eq:functional-strength} with the identity normalization gives \(|g(0)|,\Lip(g)\le\Str(g)\).  Hence
\begin{equation}
\mathfrak B(\mathbf g)^2
\le2\sum_{\ell=1}^m\Str(g_\ell)^2
\le2\mathcal S_h^2
\le\mathfrak b_h^2.
\end{equation}
Every shallow ReLU field is globally Lipschitz, so all flow hypotheses in Proposition~\ref{prop:quantitative-interface} are automatic.  The bridge therefore gives \(\alpha_*\in\B_{r_*}^m\) for which, with \(G_\alpha:=\sum_{\ell=1}^m\alpha_\ell g_\ell\),
\begin{equation}
\label{eq:normalized-exact-field}
\End(V_{\rm base}+G_{\alpha_*})=Y.
\end{equation}
Thus \(\widehat V:=V_{\rm base}+G_{\alpha_*}\) is one shallow autonomous ReLU field realizing all normalized endpoints exactly.

\subsection{Width--strength accounting and proof of the quantitative theorem}

Concatenate the parameter lists of \(V_{\rm base}\) and the scaled fields \(\alpha_{*,\ell}g_\ell\), and call the resulting list \(\widehat\theta\).  Then
\begin{equation}
\label{eq:exact-normalized-budgets}
\widehat p:=p(\widehat\theta)\le\mathcal P_f+\mathcal P_h,
\qquad
\Str(\widehat\theta)\le\mathcal S_f+r_*\mathcal S_h.
\end{equation}
Appendix~\ref{app:quantitative-closure} closes these quantities under one coarse envelope.  Hence, after enlarging a dimension-only constant \(C_N\),\vspace{-2mm}
\begin{equation}
\label{eq:normalized-joint}
\widehat p,\ \Str(\widehat\theta)
\le\left(C_N\frac K\rho\right)^{C_NK}.\vspace{-2mm}
\end{equation}

\begin{proof}[Proof of Theorem~\ref{thm:quantitative-exactification}]
For \(K=1\), admissibility forces the unique sample to be stationary and the empty parameter list suffices.  Suppose \(K\ge2\) and use the normalized realization above.  Write
\(\widehat V(z)=\sum_{j=1}^{\widehat p}\widehat w_j(\widehat a_j^\top z+\widehat b_j)_+\), and define
\begin{equation}
\label{eq:pullback-parameters}
w_j:=\frac1T B^{-1}\widehat w_j,
\qquad
a_j:=B^\top\widehat a_j,
\qquad
b_j:=\widehat b_j-a_j^\top x_*.
\end{equation}
Then \(V_\theta(x)=T^{-1}B^{-1}\widehat V(Sx)\), and uniqueness of solutions gives
\(
S\circ\Phi_{V_\theta}^t
=\Phi_{\widehat V}^{t/T}\circ S.
\)
At \(t=T\), equation \eqref{eq:normalized-exact-field} yields the original endpoint constraints.  The affine pullback of each ridge unit is again one ridge unit, so the width is unchanged, while
\(T\Str_S(\theta)=\Str(\widehat\theta)\).  Combining \eqref{eq:normalized-joint} with \eqref{eq:rho-aspect} gives
\[
p(\theta),\ T\Str_S(\theta)
\le
\bigl(C_NK\mathfrak a(\cV)\bigr)^{C_NK}
=\mathfrak U_N(\cD).
\]
Finally, the positive-homogeneity balancing identity \eqref{eq:balancing} gives \eqref{eq:main-balanced}.  This proves \eqref{eq:main-exactness}--\eqref{eq:main-affine}.
\end{proof}

\subsection{Interpretation, coordinate bounds, and examples}

For a directly computable estimate in the original coordinates, define
\[
\Gamma_{\mathrm{coord}}(\cV)
:=
\begin{cases}
1,&K=1,\\[2mm]
\displaystyle
\frac{1+\max_{z\in\cV}\abs z}{\min\{1,\sep(\cV)\}},&K\ge2.
\end{cases}
\]

\begin{corollary}[Coordinate width--strength and individual-parameter bound]
\label{cor:coordinate-bound}
Let \(N\ge2\) and \(T>0\).  The constant \(C_N\) in Theorem~\ref{thm:quantitative-exactification} may be chosen so that every admissible dataset admits a parameter list \(\theta\) satisfying \(\Phi_{V_\theta}^T(x_i)=y_i\) for \(i=1,\ldots,M\) and
\begin{equation}
\label{eq:main-coordinate}
p(\theta),\ T\Str(\theta)
\le
\bigl(C_NM\Gamma_{\mathrm{coord}}(\cV)\bigr)^{C_NM}.
\end{equation}
Writing \(W_\theta=[w_1\ \cdots\ w_p]\), taking the rows of \(A_\theta\) to be \(a_j^\top\), and writing \(b_\theta=(b_j)_{j=1}^p\), the units may moreover be balanced so that
\begin{equation}
\label{eq:coordinate-parameter-bound}
\left(\norm{W_\theta}_{\mathrm F}^2+
\norm{A_\theta}_{\mathrm F}^2+
\norm{b_\theta}_2^2\right)^{1/2}
\le
\sqrt{\frac2T}\,
\bigl(C_NM\Gamma_{\mathrm{coord}}(\cV)\bigr)^{C_NM/2}.
\end{equation}
In particular, the same right-hand side bounds every individual scalar parameter and also the operator norms of the two weight matrices.
\end{corollary}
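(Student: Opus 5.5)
The plan is to rerun the normalized realization behind Theorem~\ref{thm:quantitative-exactification} with a \emph{scalar} normalization in place of the near-minimizer $L_{\rm aff}$, and then convert the normalized strength $\Str_S$ back to the coordinate strength $\Str$ at the cost of factors polynomial in $\Gamma_{\mathrm{coord}}(\cV)$. For $K=1$ the empty parameter list works and the bound is trivial. So assume $K\ge2$.

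\textbf{Step 1: scalar normalization.} Take $L_{\rm aff}=\Id$, fix $x_*\in\cV$, and set $B=\Id/(2\diam(\cV))$ and $S(x)=B(x-x_*)$. Then $S(\cV)\subset\overline\B_{1/2}$ and
\[
\rho=\sep(S\cV)=\frac{\sep(\cV)}{2\diam(\cV)}.
\]
Since $\diam(\cV)\le 2\max_{z\in\cV}|z|\le 2(1+\max_{z\in\cV}|z|)$, we get
\[
\rho\ge\frac{\min\{1,\sep(\cV)\}}{4(1+\max|z|)}=\frac{1}{4\Gamma_{\mathrm{coord}}(\cV)}.
\]
The argument of the normalized realization, that is, Proposition~\ref{prop:quantitative-interface}, Lemma~\ref{lem:smooth-vector-approximation}, and the closure \eqref{eq:normalized-joint}, only uses $S(\cV)\subset\overline\B_{1/2}$ and the value of $\rho$; the near-minimality of $L_{\rm aff}$ is never used. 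So the pullback \eqref{eq:pullback-parameters} produces an exact interpolant with
\[
p(\theta),\ T\Str_S(\theta)\le\bigl(4C_NK\Gamma_{\mathrm{coord}}\bigr)^{C_NK},
\]
and $K\le 2M$.

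\textbf{Step 2: converting $\Str_S$ to $\Str$.} This is the only genuinely new estimate, and I expect it to be the main (though elementary) obstacle. Write $D:=2\diam(\cV)$. Then
\[
|w_j|=D\,|Bw_j|,\qquad |a_j|=D^{-1}|B^{-\top}a_j|,\qquad |b_j|\le|a_j^\top x_*+b_j|+|a_j|\,|x_*|.
\]
Hence
\[
|w_j|\sqrt{|a_j|^2+b_j^2}\le|Bw_j|\Bigl((1+|x_*|)\,|B^{-\top}a_j|+D\,|a_j^\top x_*+b_j|\Bigr)\le C\,(1+|x_*|+D)\,|Bw_j|\sqrt{|B^{-\top}a_j|^2+|a_j^\top x_*+b_j|^2}.
\]
Both $|x_*|$ and $D$ are at most $4\Gamma_{\mathrm{coord}}$. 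Summing over $j$ gives $T\Str(\theta)\le C\Gamma_{\mathrm{coord}}\,T\Str_S(\theta)$. The extra factor, together with the change from $K$ to $2M$ and from $\mathfrak a$ to $4\Gamma_{\mathrm{coord}}$, is absorbed by enlarging $C_N$. This yields \eqref{eq:main-coordinate}.

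\textbf{Step 3: balancing and individual parameters.} I will apply the positive-homogeneity identity \eqref{eq:balancing} with $S=\Id$. After discarding zero units, for each unit $\inf_{\lambda>0}\bigl(\lambda^{-2}|w_j|^2+\lambda^2(|a_j|^2+b_j^2)\bigr)=2|w_j|\sqrt{|a_j|^2+b_j^2}$ by AM--GM. These rescalings do not change $V_\theta$, and the infimum is attained. Therefore
\[
\norm{W_\theta}_{\mathrm F}^2+\norm{A_\theta}_{\mathrm F}^2+\norm{b_\theta}_2^2=2\Str(\theta)\le\frac{2}{T}\bigl(C_NM\Gamma_{\mathrm{coord}}\bigr)^{C_NM},
\]
and taking square roots gives \eqref{eq:coordinate-parameter-bound}. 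Every scalar entry and both operator norms are bounded by the corresponding Frobenius or Euclidean norms, which proves the final sentence.
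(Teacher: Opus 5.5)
Your proposal is correct and follows essentially the paper's route. You use a scalar normalization with $\sep^{-1}\lesssim\Gamma_{\mathrm{coord}}(\cV)$, reuse the normalized estimate \eqref{eq:normalized-joint}, and pull back at a cost polynomial in $\Gamma_{\mathrm{coord}}(\cV)$ before absorbing $K\le 2M$ into $C_N$ and balancing unitwise. The paper instead uses the uncentered map $x\mapsto x/(2R_{\rm data})$ with $R_{\rm data}=1+\max_{z\in\cV}|z|$, which avoids your bias-shift term $a_j^\top x_*$ and gives $T\Str(\theta)\le 2R_{\rm data}\Str(\widehat\theta)$ directly.
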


The proof of Corollary~\ref{cor:coordinate-bound} is given in Appendix~\ref{app:quantitative-closure}.

\begin{center}
\fbox{\begin{minipage}{0.93\textwidth}
\textbf{An explicit width-four example.}

Take $N=2$, $T=1$, and the four inputs $x_1=(1,0)$, $x_2=(-1,0)$, $x_3=(0,2)$, and $x_4=(0,-2)$, with targets $y_i=-x_i$. Define four ReLU units with zero biases by
\[
\begin{array}{c|cccc}
	j&1&2&3&4\\ \hline
	a_j&(1,0)&(-1,0)&(0,1)&(0,-1)\\
	w_j&(0,\pi)&(0,-\pi)&(-\pi,0)&(\pi,0).
\end{array}
\]
Since $s=s_+-(-s)_+$, their sum is exactly
\[
V(x)=\sum_{j=1}^4w_j(a_j^\top x)_+=\pi(-x_2,x_1).
\]
Hence $\Phi_V^t(x)=R_{\pi t}x$, where $R_{\pi t}$ is planar rotation through angle $\pi t$, and $\Phi_V^1(x_i)=-x_i=y_i$ for all four points. The field has width $p=4$ and raw strength $\Str(\theta)=\sum_j|w_j||a_j|=4\pi$. Figure~\ref{fig:dimension-routing} samples these four exact trajectories from the analytic formula. This small instance is far cheaper than the deliberately coarse worst-case envelope in Theorem~\ref{thm:quantitative-exactification}.
\end{minipage}}
\end{center}

\paragraph{Interpretation of the quantitative envelope.}
The coarse exponential dependence on \(K\) comes from protected finite-point relocation and derivative propagation through \(H\); the remaining losses come from meeting the bridge tolerances.  Optimizing the scaling is open.

\begin{remark}[Time--strength scaling and the dimension-only constant]
Let \(\widetilde V(z):=BV_\theta(B^{-1}z+x_*)\) and \(\widetilde F:=\Phi_{\widetilde V}^T\).  By \eqref{eq:functional-strength},
\(
\Lip(\widetilde V),|\widetilde V(0)|\le\Str_S(\theta).
\)
Gr\"onwall's inequality, applied to the forward and inverse flows, gives
\[
e^{-T\Str_S(\theta)}|z-z'|
\le |\widetilde F(z)-\widetilde F(z')|
\le e^{T\Str_S(\theta)}|z-z'|,
\]
so \(T\Str_S(\theta)\) controls the logarithmic bi-Lipschitz distortion.  The same argument with the growth bound \(|\widetilde V(z)|\le\Str_S(\theta)(1+|z|)\) yields
\[
|\widetilde F(z)-z|
\le(1+|z|)\bigl(e^{T\Str_S(\theta)}-1\bigr).
\]
Hence any exact interpolant satisfies, for every nonstationary datum,
\[
T\Str_S(\theta)
\ge
\log\!\left(1+\frac{|Sy_i-Sx_i|}{1+|Sx_i|}\right).
\]
Under the normalization \eqref{eq:main-normalization}, \(|Sx_i|\le1/2\) and \(|Sy_i-Sx_i|\ge[4\mathfrak a(\mathcal V)]^{-1}\), so
\[
T\Str_S(\theta)
\ge
\log\!\left(1+\frac{1}{6\mathfrak a(\mathcal V)}\right).
\]
Thus the inverse-\(T\) scaling in Theorem~\ref{thm:quantitative-exactification} reflects a genuine time--strength tradeoff rather than an artifact of the construction.

Here \(C_N\) depends only on \(N\) and absorbs fixed cutoff, approximation, and finite-order calculus constants; all data dependence remains explicit through \(K\) and \(\mathfrak a(\mathcal V)\).  For the normalized unit-time list \(\widehat\theta\), the pullback satisfies \(T\Str_S(\theta)=\Str(\widehat\theta)\).
\end{remark}

\begin{remark}[Two elementary exact width values]
If all samples are stationary, then \(p_{\min}(\cD,T)=0\).  If \(M=1\) and \(x_1\ne y_1\), the one-unit field \(V(x)=T^{-1}(y_1-x_1)(0^\top x+1)_+\) realizes the datum exactly, and hence \(p_{\min}(\cD,T)=1\). Indeed, \(p=0\) gives only the zero vector field and therefore cannot move \(x_1\) to the distinct point \(y_1\).
\end{remark}

\FloatBarrier
\section{Structural and metric obstructions}
\label{sec:cell-limitations}

This section collects two complementary limitations of autonomous flows.  In one dimension, universal exact point interpolation already fails by order preservation; see Remark~\ref{rem:one-dimensional-obstruction}.  In dimensions \(N\ge2\), exact point interpolation is possible, but autonomy still imposes a structural obstruction to universally quantified fixed-radius neighborhood routing, as expressed by Theorem~\ref{thm:two-cell}.  We first prove this semigroup obstruction and its sharp point-to-neighborhood consequence, and then record the distinct metric obstruction caused by bounded time--strength.

\subsection{Semigroup obstruction to neighborhood routing}

The pointwise exact-interpolation theory does not extend to the universally quantified fixed-radius property.  The following proof uses only continuity, autonomy, and the semigroup identity; it is independent of network width or approximation accuracy.

\begin{proof}[Proof of Theorem~\ref{thm:two-cell}]
Let \(C_a:=\overline\B_\delta(a)\), \(C_b:=\overline\B_\delta(b)\), and suppose, to the contrary, that \(\Psi^T(C_a)\subset\B_\varepsilon(b)\) and \(\Psi^T(C_b)\subset\B_\varepsilon(a)\). Because \(\varepsilon<\delta\), \(\B_\varepsilon(b)\subset C_b\). The semigroup identity gives
\begin{equation}
\label{eq:return-compression}
\Psi^{2T}(C_a)
=\Psi^T(\Psi^T(C_a))
\subset\Psi^T(C_b)
\subset\B_\varepsilon(a).
\end{equation}
Choose \(\rho\) with \(\varepsilon<\rho<\delta\) and set \(K_a:=\overline\B_\rho(a)\). Since \(K_a\subset C_a\), equation \eqref{eq:return-compression} implies
\[
\Psi^{2T}(K_a)\subset\B_\varepsilon(a)\subset K_a.
\]
Thus \(\Psi^{2T}|_{K_a}:K_a\to K_a\) is a continuous self-map of a nonempty compact convex set. Brouwer's fixed-point theorem yields \(z\in K_a\) such that
\begin{equation}
\label{eq:return-fixed-point}
\Psi^{2T}(z)=z.
\end{equation}
The strict inclusion above implies \(z\in\B_\varepsilon(a)\), while \(\Psi^T(z)\in\B_\varepsilon(b)\). Since \(\abs{a-b}>2\delta>2\varepsilon\), these two balls are disjoint, so \(z\) lies on a nonconstant \(2T\)-periodic orbit.

For every \(s\ge0\), the semigroup identity and \eqref{eq:return-fixed-point} give
\begin{equation}
\label{eq:orbitwise-fixed}
\Psi^{2T}(\Psi^s(z))
=\Psi^s(\Psi^{2T}(z))
=\Psi^s(z).
\end{equation}
Let \(\gamma(s):=\Psi^s(z)\), \(0\le s\le T\). Its initial point lies in \(\B_\varepsilon(a)\), whereas
\[
|\gamma(T)-a|
\ge |a-b|-|\gamma(T)-b|
>2\delta-\varepsilon>\delta.
\]
By continuity, there exists \(s_*\in(0,T)\) such that \(|\gamma(s_*)-a|=(\varepsilon+\delta)/2\). Set \(q_{\rm orb}:=\gamma(s_*)\). Then \(q_{\rm orb}\in C_a\setminus\overline\B_\varepsilon(a)\). Equation \eqref{eq:return-compression} gives \(\Psi^{2T}(q_{\rm orb})\in\B_\varepsilon(a)\), while \eqref{eq:orbitwise-fixed} gives \(\Psi^{2T}(q_{\rm orb})=q_{\rm orb}\), a contradiction.
\end{proof}

Figure~\ref{fig:point-region} summarizes the closed-ball return construction and the periodic-orbit contradiction.

\begin{figure}[!ht]
\centering
\includegraphics[width=.8\textwidth]{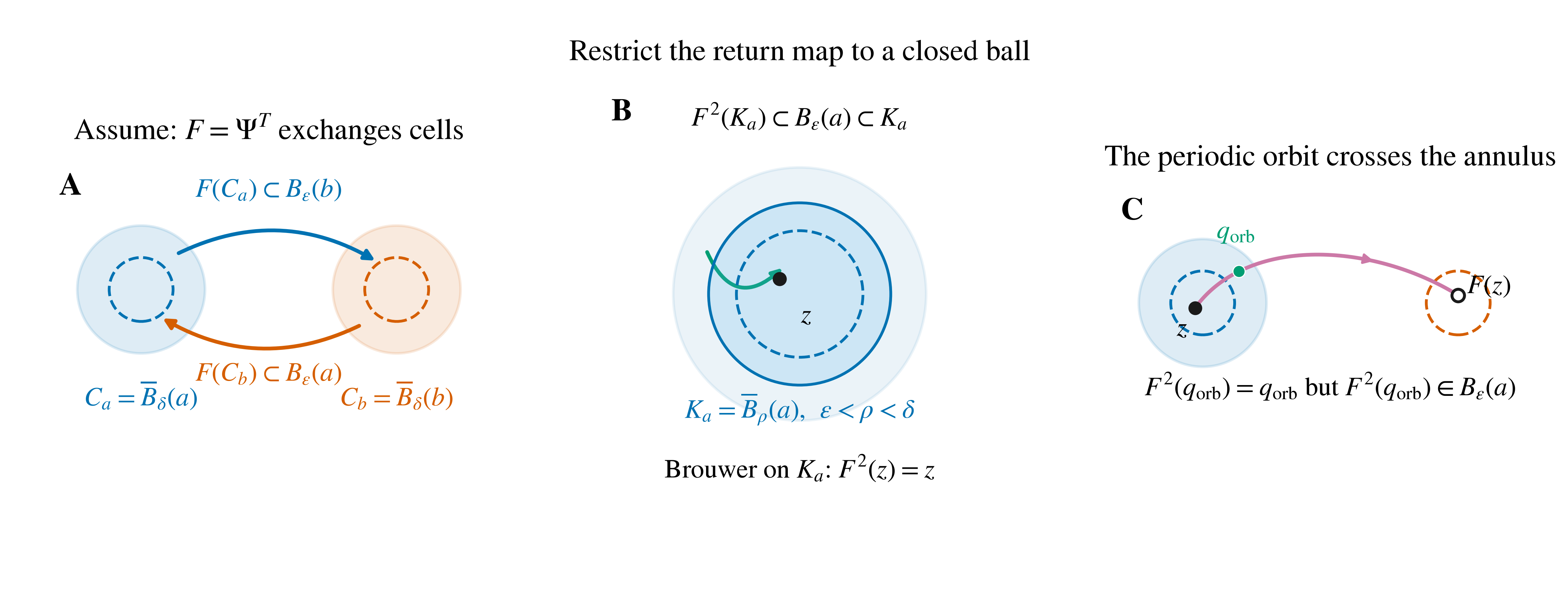}
\caption{Why an autonomous semiflow cannot exchange and compress two balls. Write $F=\Psi^T$. (A) Assume $F$ sends each closed source ball of radius $\delta$ into the open ball of radius $\varepsilon<\delta$ around the other center. (B) For $\varepsilon<\rho<\delta$, the second iterate maps the closed ball $K_a$ of radius $\rho$ into itself, so Brouwer's theorem gives $F^2(z)=z$. (C) Since $F^2$ commutes with $\Psi^s$, every point on the semiflow path from $z$ to $F(z)$ is fixed by $F^2$. This path crosses the annulus at $q$ with $\varepsilon<|q-a|<\delta$, but the assumed return inclusion requires $F^2(q)$ to lie inside the $\varepsilon$-ball, a contradiction.}
\label{fig:point-region}
\end{figure}

\begin{proof}[Proof of Corollary~\ref{cor:sharp-threshold}]
The implication at \(\delta=0\) is Corollary~\ref{thm:qualitative-interpolation}.  Let \(\delta>0\) and suppose that \((\mathsf P_\delta)\) holds. Choose \(a,b\in\R^N\) with \(\abs{a-b}>2\delta\) and \(0<\varepsilon<\delta\). Applying \((\mathsf P_\delta)\) with the two target centers exchanged gives
\[
\Phi^T(\overline\B_\delta(a))\subset\B_\varepsilon(b),\qquad
\Phi^T(\overline\B_\delta(b))\subset\B_\varepsilon(a),
\]
contradicting Theorem~\ref{thm:two-cell}. Applying SCC to the compact convex cells \(\overline\B_\delta(a)\) and \(\overline\B_\delta(b)\) would imply, a fortiori, the open-ball routing configuration forbidden by Theorem~\ref{thm:two-cell}. Hence SCC fails as well.
\end{proof}

\begin{remark}[Structural meaning of the two-cell obstruction]
The obstruction is structural rather than a capacity bound: autonomy forces \(\Psi^{2T}=\Psi^T\circ\Psi^T\), producing the strict return and periodic-orbit contradiction above.  It is independent of width and parameter magnitude and concerns a radius fixed before the dynamics are chosen; data-dependent or almost-everywhere routing is not excluded.
\end{remark}

\subsection{Metric obstruction under bounded strength}

A distinct limitation follows from quantitative invertibility: bounded time--strength prevents arbitrary collapse of within-cell spread.  The following deterministic push-forward estimate is not a statistical generalization or learning-rate statement.

\paragraph{Within-cell squared-error floor for piecewise-constant targets.}

Let \(q\in\mathbb N\), and let \(A_1,\ldots,A_q\subset\R^N\) be pairwise disjoint Borel sets.  Put \(\cA:=(A_1,\ldots,A_q)\), let \(\mu\) be a probability measure supported on their union, and put \(\pi_k:=\mu(A_k)>0\) and \(\mu_k:=\pi_k^{-1}\mu|_{A_k}\). Assume that every \(\mu_k\) has finite second moment. For target vectors \(r_1,\ldots,r_q\in\R^N\), define \(f_*(x):=r_k\) for \(x\in A_k\), and define \(f_*\) arbitrarily outside \(\bigcup_{k=1}^qA_k\). Given an affine map \(S(x)=B(x-x_*)\), the normalized squared population risk of a predictor \(F:\R^N\to\R^N\) is
\begin{equation}
\label{eq:population-risk}
\mathcal R_S(F):=
\int_{\R^N}\abs{B(F(x)-f_*(x))}^2\dd\mu(x).
\end{equation}
Let \(\nu_k:=S_\#\mu_k\).  The corresponding normalized within-cell spread is
\begin{equation}
\label{eq:within-cell-spread}
\mathcal W_S(\mu,\cA):=
\sum_{k=1}^q\pi_k\Var(\nu_k),
\qquad
\overline z_\nu:=\int z\,\dd\nu(z),
\qquad
\Var(\nu):=\int\abs{z-\overline z_\nu}^2\dd\nu(z).
\end{equation}
The identity
\begin{equation}
\label{eq:variance-pair}
\Var(\nu)
=\frac12\iint\abs{z-z'}^2\dd\nu(z)\dd\nu(z')
\end{equation}
will be used below.

Even outside the two-cell exchange configuration, quantitative invertibility imposes the following compression cost.

\begin{proposition}[Bounded-distortion obstruction to neighborhood compression]
\label{prop:risk-floor}
Let \(V_\theta\in\NN_N\), let \(F_\theta:=\Phi_{V_\theta}^T\), and let \(S(x)=B(x-x_*)\), with \(B\in\mathrm{GL}(N)\), be an affine change of coordinates.  For the piecewise-constant regression problem in \eqref{eq:population-risk}--\eqref{eq:within-cell-spread},
the conjugated terminal map $\widetilde F=S\circ F_\theta\circ S^{-1}$ obeys the co-Lipschitz estimate
\begin{equation}
|\widetilde F(z)-\widetilde F(z')|\ge e^{-T\Str_S(\theta)}|z-z'|.
\end{equation}
Consequently, bounded time--strength prevents arbitrary contraction of pairwise distances and within-region variance. As an application,
\begin{equation}
\label{eq:risk-floor}
\mathcal R_S(F_\theta)
\ge
e^{-2T\Str_S(\theta)}
\mathcal W_S(\mu,\cA).
\end{equation}
Consequently, for every budget \(Q\ge0\),
\begin{equation}
\label{eq:risk-budget}
\inf_{\theta:\,T\Str_S(\theta)\le Q}
\mathcal R_S(F_\theta)
\ge
e^{-2Q}\mathcal W_S(\mu,\cA).
\end{equation}
Equivalently, every parameter list with \(\mathcal R_S(F_\theta)>0\) satisfies the compression-cost bound
\begin{equation}
\label{eq:compression-cost}
T\Str_S(\theta)
\ge
\frac12
\log_+\!\left(
\frac{\mathcal W_S(\mu,\cA)}
{\mathcal R_S(F_\theta)}
\right).
\end{equation}
\end{proposition}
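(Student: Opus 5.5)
The plan is to work entirely in normalized coordinates and reduce everything to a single co-Lipschitz estimate for the conjugated flow.

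\textbf{Step 1: conjugated field.} Set $\widetilde V(z):=BV_\theta(B^{-1}z+x_*)$. Then $S\circ\Phi_{V_\theta}^t=\Phi_{\widetilde V}^t\circ S$, so $\widetilde F=\Phi_{\widetilde V}^T$. By \eqref{eq:functional-strength}, $L:=\Lip(\widetilde V)\le\Str_S(\theta)$.

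\textbf{Step 2: co-Lipschitz bound.} Fix $z,z'$ and apply the backward flow $\Phi_{\widetilde V}^{-t}$, which is generated by $-\widetilde V$ and has the same Lipschitz constant $L$. Gronwall gives $|\Phi^{-T}(u)-\Phi^{-T}(u')|\le e^{TL}|u-u'|$. Taking $u=\widetilde F(z)$ and $u'=\widetilde F(z')$ yields
\[
|\widetilde F(z)-\widetilde F(z')|\ge e^{-T\Str_S(\theta)}|z-z'|.
\]
This is the stated estimate. The contraction of pairwise distances and variances follows directly, since the Lipschitz constant of a shallow ReLU field is global and the flow is defined for all times.

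\textbf{Step 3: risk floor.} Write the risk cellwise. On $A_k$ we have $B(F_\theta(x)-f_*(x))=\widetilde F(Sx)-Sr_k$, because $B(y-r)=S y-S r$. Hence
\[
\mathcal R_S(F_\theta)=\sum_k\pi_k\int|\widetilde F(z)-Sr_k|^2\,\dd\nu_k(z).
\]
The mean squared deviation from any constant dominates the variance, so each integral is at least $\Var(\widetilde F_\#\nu_k)$. The finite second moments are preserved because $\widetilde F$ has linear growth; if the integral is infinite the inequality is trivial anyway. Now use the pair identity \eqref{eq:variance-pair}:
\[
\Var(\widetilde F_\#\nu_k)=\tfrac12\iint|\widetilde F(z)-\widetilde F(z')|^2\,\dd\nu_k\,\dd\nu_k\ge e^{-2T\Str_S}\Var(\nu_k).
\]
Summing over $k$ with weights $\pi_k$ gives \eqref{eq:risk-floor}.

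\textbf{Step 4: consequences.} Bound \eqref{eq:risk-budget} follows by monotonicity of $e^{-2Q}$ in $Q$ and taking the infimum. For \eqref{eq:compression-cost}, rearrange \eqref{eq:risk-floor} when $\mathcal R_S>0$, take logarithms, and use $T\Str_S\ge0$ to pass to $\log_+$.

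The only step needing care is Step 2: the inverse flow must be controlled with the same constant. Using $-\widetilde V$ handles this.
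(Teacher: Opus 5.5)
Your proposal is correct and follows essentially the same route as the paper. Both arguments conjugate by $S$, bound $\Lip(\widetilde V)$ by $\Str_S(\theta)$ via \eqref{eq:functional-strength}, and apply Gr\"onwall to the inverse flow $\Phi_{\widetilde V}^{-T}$ to get the co-Lipschitz estimate; the risk floor then follows cellwise from the mean-square-versus-variance inequality and the pair identity \eqref{eq:variance-pair}. Your explicit identification $B(F_\theta(x)-r_k)=\widetilde F(Sx)-Sr_k$ and your remark on finite second moments fill in small details that the paper leaves implicit.
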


\begin{corollary}[Within-cell squared-error floor for piecewise-constant targets]
\label{cor:exact-risk}
Let \(N\ge2\) and \(T>0\).  Assume, in addition, that the targets \(r_1,\ldots,r_q\) are pairwise distinct, and choose one point \(x_k\in A_k\) from each cell.  Apply Theorem~\ref{thm:quantitative-exactification} to \(\cD=\{(x_k,r_k)\}_{k=1}^q\), and let \(S\) be the affine normalization supplied by that theorem.  The resulting exact interpolant satisfies
\begin{equation}
\mathcal R_S(F_\theta)
\ge
e^{-2\mathfrak U_N(\cD)}
\mathcal W_S(\mu,\cA).
\end{equation}
\end{corollary}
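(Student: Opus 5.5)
The corollary should follow by inserting the quantitative bound of Theorem~\ref{thm:quantitative-exactification} into the risk floor \eqref{eq:risk-floor} of Proposition~\ref{prop:risk-floor}, both taken with the same affine normalization $S$.

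\textbf{Admissibility.} The sets $A_k$ are pairwise disjoint, so the chosen inputs $x_1,\ldots,x_q$ are pairwise distinct. By assumption the targets $r_1,\ldots,r_q$ are pairwise distinct. Hence $\cD=\{(x_k,r_k)\}_{k=1}^q$ is admissible, and Theorem~\ref{thm:quantitative-exactification} applies since $N\ge2$. It supplies $S(x)=B(x-x_*)$ with $B\in\mathrm{GL}(N)$ and a parameter list $\theta$ such that $\Phi_{V_\theta}^T(x_k)=r_k$ for every $k$ and $T\Str_S(\theta)\le\mathfrak U_N(\cD)$.

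\textbf{The degenerate case $K=1$.} Here $q=1$, $x_1=r_1$, and the theorem allows $S=\Id$ with the empty list. The interpolant is then $F_\theta=\Id$ and $T\Str_S(\theta)=0$. The general argument below still covers this case, because the proposition holds for any $B\in\mathrm{GL}(N)$.

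\textbf{Monotonicity step.} Take $S$ as the normalization in the piecewise-constant regression problem, which is legitimate because $\mathcal R_S$ and $\mathcal W_S$ are defined for any affine $S$ with $B\in\mathrm{GL}(N)$. Proposition~\ref{prop:risk-floor} gives
\[
\mathcal R_S(F_\theta)\ge e^{-2T\Str_S(\theta)}\mathcal W_S(\mu,\cA).
\]
The map $Q\mapsto e^{-2Q}$ is decreasing and $\mathcal W_S(\mu,\cA)\ge0$. Combining this with $T\Str_S(\theta)\le\mathfrak U_N(\cD)$ yields the claimed floor $\mathcal R_S(F_\theta)\ge e^{-2\mathfrak U_N(\cD)}\mathcal W_S(\mu,\cA)$.

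\textbf{Main point to check.} The only delicate issue is that the strength functional $\Str_S$ in the theorem is the same as the one in the proposition, computed with the same $S$ and the same representation $\theta$. This holds because both use the definition of $\Str_S$ from Section~\ref{sec:results}. Finiteness of the second moments of the $\mu_k$ ensures $\mathcal W_S$ is finite, so the inequality is meaningful.
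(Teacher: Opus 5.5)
Your proposal is correct and follows the paper's proof: admissibility comes from the disjointness of the cells and the distinct targets. You then insert the bound $T\Str_S(\theta)\le\mathfrak U_N(\cD)$ from Theorem~\ref{thm:quantitative-exactification} into \eqref{eq:risk-floor}, using the same $S$ and the monotonicity of $Q\mapsto e^{-2Q}$.
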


\begin{proof}[Proof of Proposition~\ref{prop:risk-floor}]
Conjugate the field by \(S\), setting \(\widetilde V(z):=BV_\theta(B^{-1}z+x_*)\) and \(\widetilde F:=\Phi_{\widetilde V}^T =S\circ F_\theta\circ S^{-1}\).  By \eqref{eq:functional-strength}, \(\Lip(\widetilde V)\le\Str_S(\theta)\).  Because \(\widetilde V\) is globally Lipschitz, its flow is defined for both positive and negative times. Gr\"onwall's inequality for the forward flow gives the usual upper Lipschitz estimate.  Applying the same estimate to the inverse flow \(\Phi_{\widetilde V}^{-T}\) and rearranging gives
\begin{equation}
\label{eq:normalized-co-lipschitz}
\abs{\widetilde F(z)-\widetilde F(z')}
\ge
e^{-T\Str_S(\theta)}\abs{z-z'}.
\end{equation}

Fix a cell \(A_k\), and let \(Z,Z'\) be independent random variables with law \(\nu_k=S_\#\mu_k\), and set \(Z_T:=\widetilde F(Z)\), \(Z_T':=\widetilde F(Z')\). The variance decomposition and \eqref{eq:normalized-co-lipschitz} imply
\[
\begin{aligned}
\int
\abs{\widetilde F(z)-S(r_k)}^2
\dd\nu_k(z)
&\ge
\Var(Z_T)
=
\frac12\mathbb E\abs{Z_T-Z_T'}^2 \ge
\frac12e^{-2T\Str_S(\theta)}
\mathbb E\abs{Z-Z'}^2 =
e^{-2T\Str_S(\theta)}
\Var(\nu_k),
\end{aligned}
\]
where the last identity is \eqref{eq:variance-pair}.  Multiply by \(\pi_k\) and sum over \(k\) to obtain \eqref{eq:risk-floor}.  The budget estimate \eqref{eq:risk-budget} is immediate.  Rearranging \eqref{eq:risk-floor}, and using the trivial lower bound \(T\Str_S(\theta)\ge0\), gives \eqref{eq:compression-cost}.
\end{proof}

\begin{proof}[Proof of Corollary~\ref{cor:exact-risk}]
The selected dataset is admissible because the cells are disjoint and the targets are pairwise distinct.  Theorem~\ref{thm:quantitative-exactification} provides an exact interpolant satisfying \(T\Str_S(\theta)\le\mathfrak U_N(\cD)\). Insert this upper bound into \eqref{eq:risk-floor}.
\end{proof}

\begin{remark}[Two-level obstruction and relative compression cost]
Theorem~\ref{thm:two-cell} is strength-independent and structural, whereas Proposition~\ref{prop:risk-floor} is metric and applies beyond the two-cell pattern. If an exact interpolant satisfies \(T\Str_S(\theta_Q)\le Q\), then
\begin{equation}
\mathcal R_S(F_{\theta_Q})\ge e^{-2Q}\mathcal W_S(\mu,\cA).
\end{equation}
Hence, when \(\mathcal W_S(\mu,\cA)>0\), the requirement \(\mathcal R_S(F_{\theta_Q})\le\varepsilon_{\rm rel}^2\mathcal W_S(\mu,\cA)\) forces
\begin{equation}
T\Str_S(\theta_Q)\ge\log(1/\varepsilon_{\rm rel}).
\end{equation}
\end{remark}

\FloatBarrier

\section{Discussion, scope, and open problems}
\label{sec:learning-consequences}

The results above concern finite-sample exact interpolation and its autonomous
neighborhood-scale limitations.  They do not establish statistical
generalization, density estimation, or distributional approximation.  Repeated
labels require a noninjective readout or target regions, and
Proposition~\ref{prop:risk-floor} is a deterministic bounded-distortion estimate.
This section records the principal distinctions and open directions.

\subsection{Comparisons and scope}

\emph{Measure transport versus labeled routing.} Autonomous transport of one
probability measure to another is a different problem from routing prescribed
labels attached to finitely many particles.  A pushforward specifies a terminal
distribution but does not prescribe a terminal point for every initial particle.
Accordingly, the two-cell obstruction does not transfer verbatim to measure
transport.  Relevant exact and approximate transport results are
\cite{denittiFernandezReal2025,lee2026beckmann,alvareztransport2025}; their
relation to autonomous neural or vector-field classes under common-time and
regularity constraints remains a separate controllability question.

\emph{Approximation inputs versus endpoint exactness.} Classical density results
for nonpolynomial ridge networks provide accurate vector-field approximants
\cite{cybenko1989,hornik1991,pinkus1999}; the quantitative ReLU argument used
here additionally draws on coefficient-controlled approximation and
smoothness-to-variation estimates \cite{klusowski2018,siegel2025,yangzhou2025}.
Exact satisfaction of the endpoint system, however, comes from the
finite-dimensional correction and degree argument, not from density alone.
Preserving finitely many equilibria during vector-field approximation
\cite{pacifico2026} is likewise distinct from prescribing arbitrary
nonstationary values of one common autonomous time map.

\subsection{Open problems and research directions}

\emph{Autonomous time maps and weaker neighborhood notions.} Corollary~\ref{cor:uniform-uap-obstruction} rules out uniform universal approximation of the full class of continuous maps, but it leaves open a characterization of maps in the uniform closure of autonomous time maps. The two-ball obstruction gives one condition stable under small uniform errors. A useful next step is to characterize this closure within specified classes of smooth invertible targets and to compare it with closure in weaker topologies. At the neighborhood scale, it remains to determine which data-dependent radii, target regions, or readouts permit useful alternatives to universal fixed-radius routing.

\emph{Model classes, closure, and stability.} Theorem~\ref{thm:abstract-exactification} uses linear closure so that the base approximant and finitely many endpoint corrections remain in the class.  It is natural to ask whether weaker algebraic or local closure conditions suffice, and whether analogous exactification principles hold for fixed-width, sparse, or otherwise nonlinear parameterized dictionaries.  Separately, the theorem produces an interpolating field for each admissible dataset but does not select one continuously: understanding continuous or Lipschitz selection, and the deterioration of endpoint-frame conditioning near collisions or loss of separation, is open.

\emph{Readouts and measure-level controllability.} Repeated class labels cannot be represented directly by an injective flow map, but a noninjective readout, target regions, or an augmented state may remove this immediate obstruction \cite{ruizbaletAffili2022}.  Which interpolation, margin, or neighborhood guarantees survive after such a readout?  At measure level, for which pairs $(\mu_0,\mu_1)$ can a prescribed autonomous neural or vector-field class satisfy $F_\#\mu_0=\mu_1$ at one common terminal time, exactly or approximately, and which injectivity, regularity, support-geometry, and common-time restrictions remain after labeled endpoint constraints are removed?  The answer may differ for atomic, absolutely continuous, and singular measures.

\emph{Complexity and construction.} The quantitative envelope is deliberately coarse.  Improving its dependence on sample number and affine geometry, proving matching lower bounds for width and path strength, and finding algorithmically implementable and numerically stable exact-interpolation procedures remain open; the endpoint coordinates developed here may provide useful variables for such constructions.

\subsection{Conclusion}

In dimension at least two, uniform approximation of smooth compactly supported vector fields within a linear class whose members generate global flows is sufficient for exact finite interpolation. The proof combines a smooth reference construction with localized endpoint corrections and a degree argument. It applies in particular to shallow ReLU fields and gives quantitative bounds on width and normalized coefficient strength. The two-ball obstruction shows that this interpolation property does not extend to universal control of fixed neighborhoods. Determining the optimal interpolation cost and the maps that remain approximable under autonomy are natural next questions.

\subsection*{Acknowledgements}
The authors thank Antonio \'Alvarez-L\'opez for fruitful suggestions. Enrique Zuazua was supported by the European Research Council (ERC) under the European Union's Horizon Europe research and innovation programme (grant agreement No.~101096251, CoDeFeL); the Air Force Office of Scientific Research under award number FA8655-22-1-7012; the Alexander von Humboldt Professorship program; the European Union's Horizon Europe MSCA project ModConFlex (HORIZON-MSCA-2021-DN-01, project 101073558); SURE-AI: The Norwegian Centre for Sustainable, Risk-Averse, and Ethical AI, grant 357482, Research Council of Norway; Grant PID2023-146872OB-I00-DyCMaMod of MICIU (Spain); and the COST Actions CA24122, \emph{Multiscale Stochastics, Patterns, and Analysis of Combinatorial Environments}, and CA24136, \emph{Interactions between Control Theory and Machine Learning}.

\FloatBarrier

\appendix
\setcounter{section}{0}
\section{Geometric construction and endpoint-correction estimates}
\label{app:finite-transitivity}

This appendix records the geometric and fixed-order estimates for the reference realization and endpoint correction directions constructed in Section~\ref{sec:exactification}.

\textbf{Constant and norm conventions.}  An unsubscripted \(C\) denotes a constant depending only on \(N\) and, where stated, on a fixed derivative order; its value may change between displayed lines. For a derivative of order \(q\), \(\norm{D^qH}_\infty\) denotes the supremum of its \(q\)-linear operator norm (and \(\norm{DH}_\infty\) the usual operator norm). Fixed dimension-dependent equivalence constants between this convention and coordinatewise \(C^s\) norms are absorbed into named bounds. Named constants are retained when they enter the quantitative ledger in Appendix~\ref{app:quantitative-closure}.

\subsection{Localized cylinder pushes and derivative bounds}

\begin{lemma}[Quantitative axial push]
\label{lem:quantitative-axial-push}
For every \(\ell>0\), \(0<r\le1\), and fixed integer \(s\ge1\), there exists \(h_{\ell,r}\in\Diff^\infty(\R)\) such that
\[
h_{\ell,r}(0)=\ell,
\qquad
h_{\ell,r}(\sigma)=\sigma
\quad\text{for }\sigma\notin[-r,\ell+r],
\]
and, with constants depending only on \(s\),
\[
\inf h_{\ell,r}'\ge c_s\frac{r}{\ell+r},
\qquad
1+\max_{1\le q\le s}
\left(
\|h_{\ell,r}^{(q)}\|_\infty+
\|(h_{\ell,r}^{-1})^{(q)}\|_\infty
\right)
\le
\left(\frac{C_s(1+\ell)}{r}\right)^{C_s}.
\]
\end{lemma}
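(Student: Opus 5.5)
We construct \(h_{\ell,r}\) as the time-one map of a compactly supported one-dimensional vector field, or directly through its derivative. The direct route gives the cleanest bounds. Fix once and for all a template \(\beta\in C^\infty(\R)\) with \(0\le\beta\le1\), \(\beta=0\) on \((-\infty,0]\), \(\beta=1\) on \([1,\infty)\), and set
\[
\psi(\sigma):=\beta\!\left(\frac{\sigma+r}{r}\right)\beta\!\left(\frac{\ell+r-\sigma}{r}\right).
\]
Then \(\psi\) is smooth, supported in \([-r,\ell+r]\), and equal to \(1\) on \([0,\ell]\). We seek \(h_{\ell,r}'=1+\lambda\psi\), with \(h_{\ell,r}(\sigma)=\sigma\) for \(\sigma\le -r\). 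The requirement \(h_{\ell,r}(0)=\ell\) forces \(\lambda\int_{-r}^0\psi=\ell\). The requirement that \(h_{\ell,r}\) be the identity beyond \(\ell+r\) forces \(\lambda\int\psi=0\). These two conditions are incompatible when \(\lambda\psi\ge0\). We therefore use a signed profile: a positive bump \(\psi_+\) on \([-r,0]\) and a negative bump \(\psi_-\) on \([0,\ell+r]\).

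Concretely, we take the derivative to be
\[
h'=1+\frac{\ell}{r c_0}\,\phi\!\left(\frac{\sigma+r}{r}\right)-\frac{\ell}{(\ell+r)c_0'}\,\Xi(\sigma).
\]
Here \(\phi\in C_c^\infty((0,1))\) is fixed and nonnegative, with \(c_0:=\int\phi\). The function \(\Xi\) is a nonnegative plateau function supported in \((0,\ell+r)\), equal to \(1\) on \([r/4,\ell+3r/4]\), built from \(\beta\) at scale \(r/4\). Its integral \(c_0'\,(\ell+r)\) satisfies \(c_0'\in[1/2,1]\). With these choices, integration from \(-r\) gives \(h(0)=\ell\) exactly, and the total mass vanishes, so \(h(\sigma)=\sigma\) outside \([-r,\ell+r]\).

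Positivity of \(h'\) is the main obstacle, since the negative term has size \(\ell/((\ell+r)c_0')\), which can approach \(2\). The plan is to scale the negative part so that its magnitude is at most \(1-c\,r/(\ell+r)\). To arrange this, we use the leftover freedom: make \(\Xi\) cover a fraction of \([0,\ell+r]\) that is as large as possible, so that \(c_0'\) is close to \(1\). Then \(\ell/((\ell+r)c_0')\le \ell/(\ell+r/2)\), which gives
\[
h'\ge 1-\frac{\ell}{\ell+r/2}=\frac{r/2}{\ell+r/2}\ge \frac{r}{2(\ell+r)}
\]
on the region where only the negative bump is active. Where the positive bump is active, \(h'\ge1\). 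Strict monotonicity, together with the identity outside a compact interval, shows that \(h\) is a smooth diffeomorphism of \(\R\).

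For the derivative bounds, each \(q\)th derivative of the two bump terms is bounded by \(C_s(\ell/r)\,r^{-q+1}\) or \(C_s\,r^{-q+1}\) after rescaling. This is at most \((C_s(1+\ell)/r)^{C_s}\), using \(r\le1\). For the inverse, we use \((h^{-1})'=1/h'\circ h^{-1}\) and the Faà di Bruno recursion for derivatives of an inverse. The \(q\)th derivative of \(h^{-1}\) is a polynomial in \(h^{(2)},\dots,h^{(q)}\) evaluated at \(h^{-1}\), divided by \((h')^{2q-1}\). The lower bound \(h'\ge r/(2(\ell+r))\) contributes a factor \(((\ell+r)/r)^{2q-1}\), and the numerator is controlled by the forward bounds. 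Since \(\ell+r\le 1+\ell\) and \(r\le1\), both factors combine into the stated form \((C_s(1+\ell)/r)^{C_s}\) after enlarging \(C_s\).
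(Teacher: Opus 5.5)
Your proposal is correct and follows essentially the same route as the paper: take $h'=1+{}$(a positive bump of mass $\ell$ on $[-r,0]$)${}-{}$(a plateau bump of mass $\ell$ on $[0,\ell+r]$), get the lower bound $1-\ell/\!\int\Xi\gtrsim r/(\ell+r)$ from the length of the plateau, and control the derivatives by rescaling and the inverse-function Fa\`a di Bruno formula. The differences are cosmetic. You use transition width $r/4$ instead of the paper's $\delta=r^2/(100(\ell+r))$, which if anything improves the exponents. The ``leftover freedom'' remark is unnecessary, because $\int\Xi\ge\ell+r/2$ already follows from your plateau on $[r/4,\ell+3r/4]$. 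The numerator in the inverse-derivative formula also involves $h'$ itself, but your forward bounds control that as well.
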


\begin{proof}
Fix a smooth step function \(\chi\in C^\infty(\R;[0,1])\) that vanishes on \((-\infty,0]\), equals one on \([1,\infty)\), and is flat at \(0\) and \(1\). Put
\[
\delta:=\frac{r^2}{100(\ell+r)}
\]
and define
\[
\psi_-(\sigma)
:=\chi\!\left(\frac{\sigma+r}{\delta}\right)
\chi\!\left(\frac{-\sigma}{\delta}\right),
\qquad
\psi_+(\sigma)
:=\chi\!\left(\frac{\sigma}{\delta}\right)
\chi\!\left(\frac{\ell+r-\sigma}{\delta}\right).
\]
Then \(\psi_-\psi_+\equiv0\),
\(\supp\psi_-\subset[-r,0]\),
\(\supp\psi_+\subset[0,\ell+r]\), and, writing
\(I_\pm:=\int_\R\psi_\pm\),
\[
r-2\delta\le I_-\le r,
\qquad
\ell+r-2\delta\le I_+\le\ell+r.
\]
Set
\[
\lambda(\sigma)
:=1+\frac{\ell}{I_-}\psi_-(\sigma)
 -\frac{\ell}{I_+}\psi_+(\sigma),
\qquad
h_{\ell,r}(\sigma)
:=-r+\int_{-r}^{\sigma}\lambda(\tau)\,\dd\tau
\]
on \([-r,\ell+r]\), and extend by the identity outside this interval. The normalized masses give
\(h_{\ell,r}(0)=\ell\) and \(h_{\ell,r}(\ell+r)=\ell+r\). Moreover, \(\lambda\ge1\) on \((-r,0)\), while on \((0,\ell+r)\), since \(\delta\le r/100\),
\[
\lambda
\ge1-\frac{\ell}{I_+}
\ge\frac{r-2\delta}{\ell+r-2\delta}
\ge\frac{49}{50}\frac{r}{\ell+r}.
\]
Thus \(h_{\ell,r}\) is an increasing smooth diffeomorphism.

Let \(A:=(1+\ell)/r\ge1\). Since \(\delta^{-1}\le100A^2\), fixed-cutoff scaling gives \(\|\psi_\pm^{(q)}\|_\infty\le C_qA^{2q}\). Also \(\ell/I_-\le2A\) and \(\ell/I_+\le2\). Hence the derivatives of \(h_{\ell,r}\) through every fixed order are bounded by a fixed power of \(A\). The lower bound on \(h_{\ell,r}'\) gives \(\|(h_{\ell,r}^{-1})'\|_\infty\le C A\); repeated differentiation of \(h_{\ell,r}\circ h_{\ell,r}^{-1}=\Id\) and the one-dimensional Fa\`a di Bruno formula give the same fixed-power bound for higher inverse derivatives. Enlarging \(C_s\) proves the claim.
\end{proof}

\begin{lemma}[Finite-order composition and pushforward bounds]
	\label{lem:finite-order-composition}
	Fix $N\ge1$ and an integer $s\ge1$.  There exists
	$C_{\mathrm{comp},N,s}\ge1$, depending only on $N$ and $s$, with the
	following properties.
	
	\begin{enumerate}[label=\textnormal{(\roman*)}]
		\item If $P_1,\ldots,P_n$ are $C^s$ diffeomorphisms and, for some
		$D\ge1$,
		\[
		1+\max_{1\le j\le n}\max_{1\le q\le s}
		\left(
		\norm{D^qP_j}_\infty+\norm{D^qP_j^{-1}}_\infty
		\right)\le D,
		\]
		then, for $H=P_n\circ\cdots\circ P_1$,
		\[
		1+\max_{1\le q\le s}
		\left(
		\norm{D^qH}_\infty+\norm{D^qH^{-1}}_\infty
		\right)
		\le
		\bigl(C_{\mathrm{comp},N,s}D\bigr)^{C_{\mathrm{comp},N,s}n}.
		\]
		
		\item If $H$ is a $C^{s+1}$ diffeomorphism,
		$u\in C_c^s(\R^N;\R^N)$, and
		\[
		\mathfrak H_{s+1}
		:=
		1+\max_{1\le q\le s+1}
		\left(
		\norm{D^qH}_\infty+\norm{D^qH^{-1}}_\infty
		\right),
		\]
		then
		\[
		\norm{H_*u}_{C^s}
		\le
		C_{\mathrm{comp},N,s}
		\bigl(1+\norm u_{C^s}\bigr)
		\mathfrak H_{s+1}^{C_{\mathrm{comp},N,s}}.
		\]
	\end{enumerate}
\end{lemma}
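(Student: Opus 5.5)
For part (i) I will argue by induction on the number of factors. The induction needs a derivative bound for a composition of two maps. For $C^s$ maps $P,Q$ the multivariate Fa\`a di Bruno formula expresses $D^q(P\circ Q)(x)$, for $1\le q\le s$, as a sum of at most $C_{N,s}$ terms. Each term has the form $D^kP(Q(x))[D^{j_1}Q(x),\ldots,D^{j_k}Q(x)]$ with $j_1+\cdots+j_k=q$. Hence
\[
1+\max_{1\le q\le s}\norm{D^q(P\circ Q)}_\infty\le C_{N,s}\,\mathfrak P\,\mathfrak Q^{s},
\]
where $\mathfrak P,\mathfrak Q$ denote the quantities $1+\max_q\norm{D^q\cdot}_\infty$.

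If I iterated this naively, the exponent would grow like $s^n$, not linearly in $n$. So the step I expect to be the main obstacle is avoiding that exponent blow-up, and I would handle it by a first-order/higher-order split.

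First, the chain rule gives $\norm{DH}_\infty\le D^n$ directly. For $q\ge2$, write $H_k:=P_k\circ\cdots\circ P_1$ and use
\[
D^qH_k=DP_k(H_{k-1})\,D^qH_{k-1}+\bigl(\text{terms containing }D^{j}H_{k-1}\text{ with }j<q\bigr).
\]
Set $a_{k,q}:=\norm{D^qH_k}_\infty$. The display gives
\[
a_{k,q}\le D\,a_{k-1,q}+C_{N,s}D\,\Bigl(\max_{j<q}(1+a_{k-1,j})\Bigr)^{q}.
\]
Induction on $q$ then yields $a_{k,q}\le (C D)^{c_q k}$ with $c_q$ depending only on $q$. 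To see this, note that the inhomogeneous term is bounded by $(CD)^{q c_{q-1}k}$. Unrolling the recursion $a_k\le D a_{k-1}+b_k$ gives a sum of at most $k$ terms, each bounded by $D^{k}(CD)^{qc_{q-1}k}$. The factor $k\le 2^k$ is absorbed into the constant.

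The same argument applies to $H^{-1}=P_1^{-1}\circ\cdots\circ P_n^{-1}$, whose factors obey the same bound $D$. Taking $C_{\mathrm{comp},N,s}\ge\max_{q\le s}c_q$ and enlarging it to absorb the additive $1$ proves (i).

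For part (ii), write $H_*u=(DH\cdot u)\circ H^{-1}$, meaning $x\mapsto DH(H^{-1}x)\,u(H^{-1}x)$. The Leibniz rule gives $\norm{DH\cdot u}_{C^s}\le C\,\mathfrak H_{s+1}\norm{u}_{C^s}$; this uses $D^{j}(DH)=D^{j+1}H$ with $j+1\le s+1$. The zeroth-order term $\norm{DH}_\infty\norm{u}_\infty$ is covered in the same way. Fa\`a di Bruno applied to the composition with $H^{-1}$ then multiplies this by at most $C\,\mathfrak H_{s+1}^{s}$, since only derivatives of $H^{-1}$ of order at most $s$ appear. Altogether
\[
\norm{H_*u}_{C^s}\le C(1+\norm u_{C^s})\,\mathfrak H_{s+1}^{s+1}.
\]
Compact support plays no role beyond making the norms finite. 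Enlarging $C_{\mathrm{comp},N,s}$ so that it also bounds the exponent $s+1$ gives the stated form.
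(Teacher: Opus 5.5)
Your proof is correct. Part (ii) is essentially the paper's argument with the steps reordered. The paper writes $H_*u=AB$ with $A=DH\circ H^{-1}$ and $B=u\circ H^{-1}$, bounds each factor by Fa\`a di Bruno, and then applies Leibniz. You apply Leibniz to $DH\cdot u$ first and compose with $H^{-1}$ once. Both reach the same power $\mathfrak H_{s+1}^{s+1}$.

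In part (i) you identify the same obstacle the paper names: iterating a two-map $C^s$ bound produces exponents of size $s^n$. You remove it by a different mechanism. You isolate the single Fa\`a di Bruno term $DP_k(H_{k-1})[D^qH_{k-1}]$ carrying the top derivative. This gives a linear inhomogeneous recursion $a_{k,q}\le D\,a_{k-1,q}+b_{k,q}$ in $k$, which you unroll inside a nested induction on $q$, with a separate first-order case. Your exponents then grow like $c_q\approx q\,c_{q-1}+2$, i.e.\ factorially in $q$.

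The paper instead keeps the derivative grading and runs a single induction on $k$ with the ansatz $\norm{D^qH_k}_\infty\le (C_0D)^{C_0qk}$. This bound reproduces itself because every Fa\`a di Bruno term satisfies $\sum_{B\in\pi}|B|=q$. Hence the product of the lower-level bounds is exactly $\bigl((C_0D)^{C_0k}\bigr)^q$, and the one extra factor $D$ from $P_{k+1}$ is absorbed in passing from $k$ to $k+1$.

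The paper's route is shorter. It needs no first-order case or unrolling, and its exponent is linear in $q$. Your top-order/lower-order split is more mechanical but equally valid here, since $q\le s$ is fixed and only the linear dependence of the exponent on $n$ matters downstream.
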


\begin{proof}
	For part~\textnormal{(i)}, let $\Pi_q$ denote the set of partitions of $\{1,\ldots,q\}$.  The set-partition form of the multivariate Fa\`a di Bruno formula gives
	\[
	\begin{aligned}
		D^q(F\circ G)(x)[v_1,\ldots,v_q]
		=
		\sum_{\pi\in\Pi_q}
		D^{|\pi|}F(G(x))
		\Bigl[
		D^{|B|}G(x)[v_i]_{i\in B}
		\Bigr]_{B\in\pi}.
	\end{aligned}
	\]
	The point needed below is that every term satisfies
	\(
	\sum_{B\in\pi}|B|=q.
	\)
	
	Set
	\[
	H_k:=P_k\circ\cdots\circ P_1,
	\qquad
	k=1,\ldots,n.
	\]
	We claim that, for a sufficiently large constant
	$C_0=C_0(N,s)\ge2$,
	\[
	\norm{D^qH_k}_\infty
	\le
	(C_0D)^{C_0qk},
	\qquad
	1\le q\le s,\quad 1\le k\le n.
	\]
	The case $k=1$ follows from the hypothesis.  Suppose the claim holds for $H_k$.  Since $H_{k+1}=P_{k+1}\circ H_k$, Fa\`a di Bruno yields
	\[
	\begin{aligned}
		\norm{D^qH_{k+1}}_\infty
		&\le
		C_{N,s}
		\sum_{\pi\in\Pi_q}
		\norm{D^{|\pi|}P_{k+1}}_\infty
		\prod_{B\in\pi}
		\norm{D^{|B|}H_k}_\infty\\
		&\le
		C_{N,s}|\Pi_q|D
		\max_{\pi\in\Pi_q}
		\prod_{B\in\pi}
		(C_0D)^{C_0|B|k}\\
		&=
		C_{N,s}|\Pi_q|D
		(C_0D)^{C_0qk}.
	\end{aligned}
	\]
	Here the last identity is precisely where $\sum_{B\in\pi}|B|=q$ is used.  Since $q\le s$, the numbers $|\Pi_q|$ are bounded in terms of $s$ alone; after enlarging $C_0$,
	\[
	\norm{D^qH_{k+1}}_\infty
	\le
	(C_0D)^{C_0q(k+1)}.
	\]
	Thus the exponent grows only linearly with the number of composed factors, rather than through an iterated $C^s$ composition bound.
	
	Taking $k=n$ and enlarging the constant gives
	\[
	1+\max_{1\le q\le s}\norm{D^qH}_\infty
	\le
	\bigl(C_{\mathrm{comp},N,s}D\bigr)^{
		C_{\mathrm{comp},N,s}n}.
	\]
	Moreover,
	\(
	H^{-1}=P_1^{-1}\circ\cdots\circ P_n^{-1},
	\)
	and the inverse factors satisfy the same derivative hypothesis.
	Applying the same argument to this reverse-order composition gives
	the corresponding estimate for $H^{-1}$ and proves
	part~\textnormal{(i)}.
	
	For part~\textnormal{(ii)}, put
	\[
	K:=H^{-1},
	\qquad
	\Lambda:=\mathfrak H_{s+1},
	\qquad
	A:=DH\circ K,
	\qquad
	B:=u\circ K.
	\]
	Then
	\[
	H_*u=AB.
	\]
	For $0\le r\le s$, Fa\`a di Bruno and $\Lambda\ge1$ give
	\[
	\norm{D^rB}_\infty
	\le
	C_{N,s}\norm u_{C^s}\Lambda^r.
	\]
	Indeed, for $r\ge1$,
	\[
	D^rB(x)[v_1,\ldots,v_r]
	=
	\sum_{\pi\in\Pi_r}
	D^{|\pi|}u(K(x))
	\Bigl[
	D^{|B'|}K(x)[v_i]_{i\in B'}
	\Bigr]_{B'\in\pi},
	\]
	and every term contains at most $r$ factors involving derivatives of
	$K$.
	
	Similarly,
	\[
	\norm{D^rA}_\infty
	\le
	C_{N,s}\Lambda^{r+1},
	\qquad
	0\le r\le s.
	\]
	Indeed, the corresponding Fa\`a di Bruno terms contain $D^{|\pi|+1}H$, together with $|\pi|$ derivatives of $K$. In particular, derivatives of $H$ are needed only through order $r+1\le s+1$, while derivatives of $H^{-1}$ are needed only through order $r\le s$.
	
	The finite-order Leibniz rule now yields, for $0\le q\le s$,
	\[
	\begin{aligned}
		\norm{D^q(H_*u)}_\infty
		&\le
		C_{N,s}
		\sum_{r=0}^q
		\binom qr
		\norm{D^rA}_\infty
		\norm{D^{q-r}B}_\infty\\
		&\le
		C_{N,s}\norm u_{C^s}
		\sum_{r=0}^q
		\binom qr
		\Lambda^{r+1}\Lambda^{q-r}\\
		&\le
		C_{N,s}\norm u_{C^s}\Lambda^{q+1}.
	\end{aligned}
	\]
	The useful bookkeeping here is
	\(
	(r+1)+(q-r)=q+1:
	\)
	the total power of $\Lambda$ is independent of how the $q$ derivatives are distributed between the two factors.  Hence
	\[
	\norm{H_*u}_{C^s}
	\le
	C_{N,s}\norm u_{C^s}\Lambda^{s+1}.
	\]
	Since $\Lambda\ge1$, enlarging $C_{\mathrm{comp},N,s}$ absorbs this stronger estimate into the form stated in the lemma.
	
	Finally,
	\(
	\supp(H_*u)\subset H(\supp u),
	\)
	so $H_*u$ is compactly supported.  This completes the proof.
\end{proof}

\begin{proof}[Proof of Lemma~\ref{lem:finite-transitivity}]
We first treat the case \(K\ge2\).

Choose parking points \(p_j=(R+2+3j)e_1\), \(j=1,\ldots,K\). First move the \(z_j\)'s to the \(p_j\)'s one at a time, and then move the \(p_j\)'s to the \(w_j\)'s. The points that are not currently moving are regarded as obstacles. During the first phase they are a mixture of original and parking points, and during the second phase they are a mixture of parking and target points. The parking points are mutually separated by three and lie at distance at least five from \(\overline\B_R\). Consequently, in every move, both the initial and terminal positions of the moving point are at distance at least \(d_{\rm cfg}\) from every obstacle, and the obstacles are mutually \(d_{\rm cfg}\)-separated.

\emph{A clear two-segment route.} Consider one move from \(a\) to \(b\), with obstacles \(o_1,\ldots,o_{K-1}\). All current points lie in \(\overline\B_{R+3K+2}\). Put \(R_{\rm sh}:=R+4K+1\) and \(\mathcal A_{\rm sh}:=\{\xi:R_{\rm sh}-1<|\xi|<R_{\rm sh}\}\). Writing \(\omega_N\) for the unit-ball volume, set \(c_N^{\rm sh}:=N\omega_N/2^{N-1}\).  Since \(R_{\rm sh}\ge2\),
\[
|\mathcal A_{\rm sh}|
=
\omega_N\bigl(R_{\rm sh}^N-(R_{\rm sh}-1)^N\bigr)
\ge
N\omega_N(R_{\rm sh}-1)^{N-1}
\ge
c_N^{\rm sh}R_{\rm sh}^{N-1}.
\]

Fix a dimension-only constant \(C_N^{\rm cap}\ge1\), large enough to absorb the spherical-cap and radial-integration constants below, including the fixed factor \(8^{N-1}\). Define
\begin{equation}
C_N^{\rm mid}
:=
2+\frac{2C_N^{\rm cap}}{c_N^{\rm sh}},
\qquad
\delta_{\rm tr}
:=
\frac{d_{\rm cfg}}
{C_N^{\rm mid}(1+K)(1+R_{\rm sh})}.
\end{equation}
In particular, \(4\delta_{\rm tr}\le d_{\rm cfg}/2\).

Fix \(a_{\rm end}\in\{a,b\}\) and one obstacle \(o_j\). If \([a_{\rm end},\xi]\), with \(\xi\in\mathcal A_{\rm sh}\), intersects \(\B_{4\delta_{\rm tr}}(o_j)\), choose an intersection point \(q\). Since \(4\delta_{\rm tr}<|o_j-a_{\rm end}|\), the intersection does not occur at \(a_{\rm end}\), and
\[
(q-a_{\rm end})^\top(o_j-a_{\rm end})
\ge |o_j-a_{\rm end}|\bigl(|o_j-a_{\rm end}|-4\delta_{\rm tr}\bigr)>0.
\]
Thus the angle \(\theta_{\rm ang}\) between \(\xi-a_{\rm end}\) and \(o_j-a_{\rm end}\) satisfies \(0\le\theta_{\rm ang}<\pi/2\) and \(\sin\theta_{\rm ang}\le4\delta_{\rm tr}/|o_j-a_{\rm end}| \le4\delta_{\rm tr}/d_{\rm cfg}\le1/2\).  Hence \(\theta_{\rm ang}\le2\sin\theta_{\rm ang}\le8\delta_{\rm tr}/d_{\rm cfg}\), so the direction \((\xi-a_{\rm end})/|\xi-a_{\rm end}|\) belongs to a spherical cap of angular radius
\[
\theta_0:=\frac{8\delta_{\rm tr}}{d_{\rm cfg}}\le1.
\]
For any \(e\in\mathbb S^{N-1}\), spherical coordinates give
\[
\mathcal H^{N-1}\!\left(
\{\omega\in\mathbb S^{N-1}:\angle(\omega,e)\le\theta_0\}
\right)
=|\mathbb S^{N-2}|\int_0^{\theta_0}(\sin\theta)^{N-2}\,\dd\theta
\le\frac{|\mathbb S^{N-2}|}{N-1}\theta_0^{N-1}.
\]
Moreover, \(|a_{\rm end}|\le R+3K+2<R_{\rm sh}\), so every
\(\xi\in\mathcal A_{\rm sh}\) satisfies
\(|\xi-a_{\rm end}|\le2R_{\rm sh}\). Enlarging the bad set to the
corresponding cone and integrating radially about \(a_{\rm end}\) therefore
gives
\[
\begin{aligned}
|\{\xi\in\mathcal A_{\rm sh}:[a_{\rm end},\xi]
\cap\B_{4\delta_{\rm tr}}(o_j)\ne\varnothing\}|
&\le
\frac{|\mathbb S^{N-2}|}{N-1}\theta_0^{N-1}
\int_0^{2R_{\rm sh}}r^{N-1}\,\dd r\\
&\le
C_N^{\rm cap}R_{\rm sh}^N
\left(\frac{\delta_{\rm tr}}{d_{\rm cfg}}\right)^{N-1}.
\end{aligned}
\]
After a union bound over the two endpoints and \(K-1\) obstacles, the ratio of bad volume to \(|\mathcal A_{\rm sh}|\) is, since \(0<\delta_{\rm tr}/d_{\rm cfg} =[C_N^{\rm mid}(1+K)(1+R_{\rm sh})]^{-1}<1\) and \(N\ge2\), at most
\[
\frac{2C_N^{\rm cap}}{c_N^{\rm sh}}KR_{\rm sh}
\left(\frac{\delta_{\rm tr}}{d_{\rm cfg}}\right)^{N-1}
\le
\frac{2C_N^{\rm cap}}
{c_N^{\rm sh}C_N^{\rm mid}}
\frac{K}{1+K}
\frac{R_{\rm sh}}{1+R_{\rm sh}}
<1.
\]
Thus some \(\xi_{\rm sh}\in\mathcal A_{\rm sh}\) makes both \([a,\xi_{\rm sh}]\) and \([\xi_{\rm sh},b]\) stay at distance at least \(4\delta_{\rm tr}\) from every obstacle. This angular argument is the only place where \(N\ge2\) is essential.

\emph{One straight-cylinder push.} We now use the quantitative axial construction above.

Now consider an oriented segment of length \(\ell_{\rm seg}>0\) and a radius \(0<r_{\rm cyl}\le1\). After an orthogonal change of coordinates, write points as \((\sigma,\xi)\in\R\times\R^{N-1}\), with endpoints \((0,0)\) and \((\ell_{\rm seg},0)\), and take
\(h_{\rm ax}:=h_{\ell_{\rm seg},r_{\rm cyl}}\) from Lemma~\ref{lem:quantitative-axial-push}.

Choose a fixed \(\zeta\in C_c^\infty(\B_1^{N-1})\), \(0\le\zeta\le1\), with \(\zeta\equiv1\) on \(\B_{1/3}^{N-1}\), and put \(\zeta_r(\xi):=\zeta(\xi/r_{\rm cyl})\).  Define
\[
\mathscr P_{\rm cyl}(\sigma,\xi)
:=
\bigl(\sigma+\zeta_r(\xi)(h_{\rm ax}(\sigma)-\sigma),\xi\bigr).
\]
Its axial derivative is \((1-\zeta_r)+\zeta_r h_{\rm ax}'>0\), so \(\mathscr P_{\rm cyl}\) is a diffeomorphism, is the identity near the cylinder boundary, and sends \((0,0)\) to \((\ell_{\rm seg},0)\). For clarity, the inverse axial coordinate \(\sigma=\sigma(\tau,\xi)\) is determined by \(\tau=\sigma+\zeta_r(\xi)(h_{\rm ax}(\sigma)-\sigma)\). Its axial derivative \(d=1-\zeta_r(\xi)+\zeta_r(\xi)h_{\rm ax}'(\sigma)\) is a convex combination of \(1\) and \(h_{\rm ax}'\), hence is bounded below by a positive constant times \(r_{\rm cyl}/(\ell_{\rm seg}+r_{\rm cyl})\). Implicit differentiation gives \(\partial_\tau\sigma=1/d\) and \(\partial_{\xi_j}\sigma=-\partial_{\xi_j}\zeta_r(\xi)(h_{\rm ax}(\sigma)-\sigma)/d\). The displacement and cutoff bounds, followed by repeated implicit differentiation, bound every fixed-order inverse derivative by a fixed power of \((1+\ell_{\rm seg})/r_{\rm cyl}\). Thus there is a constant \(C_{\rm seg,N,s}\ge1\), depending only on \(N,s\), such that
\begin{equation}
1+\max_{1\le q_{\rm der}\le s}
\left(
\norm{D^{q_{\rm der}}\mathscr P_{\rm cyl}}_\infty
+
\norm{D^{q_{\rm der}}\mathscr P_{\rm cyl}^{-1}}_\infty
\right)
\le
\left(
\frac{C_{\rm seg,N,s}(1+\ell_{\rm seg})}{r_{\rm cyl}}
\right)^{C_{\rm seg,N,s}}.
\label{eq:one-segment-bound}
\end{equation}
In the routed construction take \(r_{\rm cyl}=\delta_{\rm tr}/2\).  The support then lies within distance \(<\delta_{\rm tr}\) of the segment and hence fixes every current obstacle.

\emph{Composition for \(K\ge2\).} There are at most \(2K\) point moves and hence at most \(4K\) segment pushes. Every routed segment has \(\ell_{\rm seg}\le2R_{\rm sh}\), while \(r_{\rm cyl}^{-1}=2C_N^{\rm mid}(1+K)(1+R_{\rm sh})/d_{\rm cfg}\) and \(1+R_{\rm sh}\le4(1+K)(1+R)\). Consequently,
\begin{equation}
\frac{1+\ell_{\rm seg}}{r_{\rm cyl}}
\le
\left(
\frac{
C_N^{\rm len}(1+K)(1+R)
}{
d_{\rm cfg}
}
\right)^3,
\qquad
C_N^{\rm len}
:=
8\max\{1,C_N^{\rm mid}\}.
\label{eq:length-radius-bound}
\end{equation}

Apply Lemma~\ref{lem:finite-order-composition}\textnormal{(i)} to the at most $4K$ segment pushes and to the reverse-order inverse composition.  Choose
\[
C_{\mathrm{tr},N,s}
:=12C_{\mathrm{comp},N,s}
\bigl(1+C_{\mathrm{seg},N,s}+C_N^{\rm len}\bigr).
\]
Combining Lemma~\ref{lem:finite-order-composition}\textnormal{(i)} with \eqref{eq:one-segment-bound} and \eqref{eq:length-radius-bound} gives \eqref{eq:H-bound}. Each push fixes all current obstacles and all already parked or placed points. Moreover, every routed segment lies in \(\B_{R_{\rm sh}}\), and every support cylinder lies in \(\B_{R_{\rm sh}+1}=\B_{R+4K+2}\Subset\Omega_{\rm tr}\). This proves the lemma for \(K\ge2\).

It remains to treat \(K=1\). If \(z_1=w_1\), take \(H=\Id\). Otherwise apply the straight-cylinder construction above directly to the single segment \([z_1,w_1]\), with \(\ell_{\rm seg}=|w_1-z_1|\le2R\) and \(r_{\rm cyl}:=1/4\). No routing is needed. The support cylinder lies within distance \(\sqrt2\,r_{\rm cyl}<1\) of \([z_1,w_1]\), and hence in \(\B_{R+1}\Subset\Omega_{\rm tr}\). By \eqref{eq:one-segment-bound},
\[
1+\max_{1\le q_{\rm der}\le s}
\left(
\norm{D^{q_{\rm der}}H}_\infty+
\norm{D^{q_{\rm der}}H^{-1}}_\infty
\right)
\le
\bigl(
8C_{\mathrm{seg},N,s}(1+R)
\bigr)^{C_{\mathrm{seg},N,s}}.
\]
Since here \(d_{\rm cfg}=1\), \(1+K=2\), and the above choice of \(C_{\mathrm{tr},N,s}\) satisfies \(C_{\mathrm{tr},N,s}\ge12C_{\mathrm{seg},N,s}\), this is bounded by the right-hand side of \eqref{eq:H-bound}. Thus \eqref{eq:H-bound} also holds for \(K=1\), completing the proof.
\end{proof}

\subsection{Quantitative output for the transported reference field}

Assume the normalized configuration satisfies
\[
\cV\subset\overline\B_{1/2},
\qquad \sep(\cV)=\rho,
\qquad 0<\rho\le1.
\]
Put \(R_{\rm vert}:=4K+1\), \(d_{\rm tr}:=\min\{1,4/K,\rho\}\), and \(R_{\rm tr}:=8K+6\).  The standard vertices lie in \(\B_{R_{\rm vert}}\) and have separation at least \(4/K\).  Applying Lemma~\ref{lem:finite-transitivity} with \(R=R_{\rm vert}\) and derivative order \(s_N+3\) gives the diffeomorphism appearing in \eqref{eq:H-data-map}, supported in \(\B_{R_{\rm tr}}\), together with
\begin{equation}
\label{eq:H-envelope}
\begin{aligned}
\mathfrak H
&:=1+\max_{1\le j\le s_N+3}
\left(\norm{D^jH}_\infty+\norm{D^jH^{-1}}_\infty\right),\\
\mathfrak H
&\le
\left[C_{\mathrm{tr},N,s_N+3}(1+K)(1+R_{\rm vert})d_{\rm tr}^{-1}\right]^{C_{\mathrm{tr},N,s_N+3}K}.
\end{aligned}
\end{equation}
Since \(d_{\rm tr}^{-1}\le K/\rho\), this envelope is bounded by \((C_NK/\rho)^{C_NK}\) after enlarging a dimension-only constant.

The pushforward identity
\[
f(x)=DH(H^{-1}x)\,f_0(H^{-1}x)
\]
fits Lemma~\ref{lem:finite-order-composition}\textnormal{(ii)}.  Since $s_N$ depends only on $N$ and the envelope $\mathfrak H$ controls more than the required $s_N+1$ derivatives, choose a dimension-only constant $C_{\mathrm{pc},N}\ge C_{\mathrm{comp},N,s_N}+1$ large enough to absorb the fixed product--composition constants below.  Then
\begin{equation}
\label{eq:transported-field-bound}
M_f:=C_{\mathrm{pc},N}(1+\norm{f_0}_{C^{s_N}})\mathfrak H^{C_{\mathrm{pc},N}},
\qquad
\norm f_{C^{s_N}}\le M_f.
\end{equation}
The compact support of \(H-\Id\) also shows that \(H\) maps its support ball onto itself and that all reference trajectories remain in \(\B_{R_{\rm tr}}\).

We next record the regularity estimates for the private tubes and endpoint-frame directions from Section~\ref{sec:endpoint-frame-module}.

\subsection{Private-tube geometry}
\label{app:endpoint-coordinates}

Take \(\varepsilon_{\rm tube}:=1/(100K)\).  For \(|z|<2\varepsilon_{\rm tube}\), one has \(3/4<1+z_1<5/4\), while the angular interval swept by \(I_{\rm tube}\) has length at most \(\pi/2\).  The longitudinal derivative of \(\Theta_i^0\) has size comparable to \(\omega_{\nu(i)}\asymp n_{\nu(i)}^{-1}\), and the transverse derivatives form an orthogonal frame up to uniformly bounded factors.  Hence \(\Theta_i^0\) is injective on \(I_{\rm tube}\times\B_{2\varepsilon_{\rm tube}}^{N-1}\), its derivative has singular values bounded below by \(1/K\), and it is a smooth diffeomorphism onto \(O_i^0\).

For \(t\in\overline I_{\rm tube}\), the centerline is at distance at least \(2\sin(\pi/(4n_{\nu(i)}))\ge1/K\) from every different constrained arc on the same cycle; arcs in distinct components are separated by at least two.  Since the tube radius is less than \(1/(50K)\), this proves the privacy property \eqref{eq:standard-tube-privacy}.  Repeated differentiation of \(\Theta_i^0\) and its inverse yields, for every fixed integer \(s\ge1\),
\begin{equation}
\label{eq:tube-derivative-envelope}
1+\norm{\Theta_i^0}_{C^{s+1}}
+\norm{(\Theta_i^0)^{-1}}_{C^{s+1}(O_i^0)}
\le C_{\mathrm{tube},N,s}K^{2s+2}.
\end{equation}
The power of \(K\) records the only small singular value, namely the longitudinal one.

\subsection{Smooth zero extension and endpoint-correction norms}
\label{app:zero-extension}

For completeness, one may take
\[
\vartheta(t):=
\begin{cases}
\displaystyle
\frac{\exp\!\left(-\frac{1}{1-100(t-1/2)^2}\right)}
{\displaystyle\int_{2/5}^{3/5}\exp\!\left(-\frac{1}{1-100(s-1/2)^2}\right)\dd s},
&|t-1/2|<1/10,\\[3mm]
0,&|t-1/2|\ge1/10,
\end{cases}
\]
and
\[
\zeta_{\rm fr}(z):=
\begin{cases}
\exp\!\bigl(1-1/(1-|z|^2)\bigr),&|z|<1,\\
0,&|z|\ge1,
\end{cases}
\qquad
\zeta_i(z):=\zeta_{\rm fr}(z/\varepsilon_{\rm tube}).
\]
Their supports are compactly contained in the coordinate cylinder, so the fields \(k_{i,a}^0\) defined in Section~\ref{sec:endpoint-frame-module} extend smoothly by zero.  Derivatives of \(\zeta_i\) through order \(s\) cost at most a fixed multiple of \((100K)^s\); derivatives of \(U_i^0(1,t)^{-1}\) are uniformly bounded; and composition with \((\Theta_i^0)^{-1}\) is controlled by \eqref{eq:tube-derivative-envelope}.  Thus
\begin{equation}
\label{eq:standard-frame-Cs}
\max_{i,a}\norm{k_{i,a}^0}_{C^s}
\le C_{\mathrm{fr},N,s}K^{2s^2+4s+2}.
\end{equation}
For a self-loop, the fixed bump construction has the same conclusion with a dimension-only bound.

For the transported directions in \eqref{eq:preconditioned-standard-frame}, the preconditioning matrix $DH(y_i^0)^{-1}$ is bounded by $\mathfrak H$.  Thus the $C^{s_N}$ norm of the standard field inside the pushforward is bounded by a fixed multiple of $\mathfrak H\,C_{\mathrm{fr},N,s_N}K^{2s_N^2+4s_N+2}$.  Applying Lemma~\ref{lem:finite-order-composition}\textnormal{(ii)} and enlarging $C_{\mathrm{pc},N}$ to absorb this additional factor gives
\begin{equation}
\label{eq:transported-frame-bound}
M_h:=C_{\mathrm{pc},N}C_{\mathrm{fr},N,s_N}
K^{2s_N^2+4s_N+2}\mathfrak H^{C_{\mathrm{pc},N}},
\qquad
\max_{1\le\ell\le m}\norm{h_\ell}_{C^{s_N}}\le M_h.
\end{equation}
No additional endpoint argument is needed, since the identity \eqref{eq:ideal-sensitivity-identity} was already established in the body of the paper.

\section{Coefficient-controlled shallow-ReLU approximation}
\label{app:relu-approximation}

This appendix contains the only approximation-theoretic input specific to the shallow ReLU dictionary.  It proves Lemma~\ref{lem:smooth-vector-approximation} with simultaneous control of uniform error, width, and coefficient strength.  No endpoint or degree argument is used here.

\subsection{Coefficient-controlled shallow-ReLU approximation}

Let \(R>0\) and \(u\in C(\overline\B_R)\).  Define the normalized product-dictionary ReLU variation
\begin{equation}
\label{eq:variation-definition}
\gamma_R(u):=\inf\left\{\norm{\lambda}_{\TV}:
u(x)=\int_{\mathbb S^{N-1}\times[-1,1]}
\left(v^\top\frac{x}{R}+\beta_{\rm rid}\right)_+
\dd\lambda(v,\beta_{\rm rid}),\ \forall x\in\overline\B_R\right\},
\end{equation}
where \(\lambda\) ranges over finite signed Radon measures.  If no representation exists, set \(\gamma_R(u)=+\infty\).  We shall also use the joint-sphere dictionary \((a^\top z+b)_+\), \((a,b)\in\mathbb S^N\), employed by the smoothness embedding below.  The product variation in \eqref{eq:variation-definition} is no larger than the corresponding joint-sphere variation, while the latter is at most \(\sqrt2\,\gamma_R(u)\).  Indeed, decompose the joint sphere into \(a\ne0,|b|\le|a|\), \(a\ne0,b>|a|\), \(a\ne0,b<-|a|\), and \(a=0\). On the first set, \((a^\top z+b)_+ =|a|\bigl((a/|a|)^\top z+b/|a|\bigr)_+\). On \(b>|a|\) the atom is affine on \(\overline\B_1\) and, with \(v=a/|a|\),
\begin{equation}
(a^\top z+b)_+
=\frac{|a|+b}{2}(v^\top z+1)_+
+\frac{b-|a|}{2}(-v^\top z+1)_+.
\end{equation}
If \(a=0,b=1\), fix any \(v\in\mathbb S^{N-1}\) and use \(1=\tfrac12(v^\top z+1)_++\tfrac12(-v^\top z+1)_+\), whereas \(a=0,b=-1\) and \(b<-|a|\) give the zero atom.  These transformations do not increase total variation.  Conversely, for \(|v|=1\) and \(|\beta_{\rm rid}|\le1\), \((v^\top z+\beta_{\rm rid})_+ =\sqrt{1+\beta_{\rm rid}^2}\bigl( (v/\sqrt{1+\beta_{\rm rid}^2})^\top z +\beta_{\rm rid}/\sqrt{1+\beta_{\rm rid}^2}\bigr)_+\), whose cost is at most \(\sqrt2\).  This proves the stated comparison.

The next two lemmas record the precise Siegel and Yang--Zhou specializations used below; the scaling is \(z=x/R\), with the preceding joint-sphere/product-dictionary conversion.

\begin{lemma}[Coefficient-controlled uniform approximation]
\label{lem:coefficient-approximation}
There is a finite \(A_N>0\), depending only on \(N\), such that for every \(R>0\), \(u\in C(\overline\B_R)\), strict variation budget \(\mathcal B>\gamma_R(u)\), and integer \(n_{\rm rid}\ge1\), there is an \(n_{\rm rid}\)-term scalar ReLU network
\[
u_{n_{\rm rid}}(x)=\sum_{j=1}^{n_{\rm rid}} c_j
\left(v_j^\top\frac{x}{R}+\beta_j^{\rm rid}\right)_+
\]
satisfying
\begin{align}
&\sum_{j=1}^{n_{\rm rid}}|c_j|\le \mathcal B,
\quad \Lip(u_{n_{\rm rid}})\le \mathcal B/R,\quad
\sum_{j=1}^{n_{\rm rid}}|c_j|
\sqrt{R^{-2}+(\beta_j^{\rm rid})^2}
\le \sqrt{1+R^{-2}}\,\mathcal B.
\label{eq:scalar-strength-control}\\
&\norm{u-u_{n_{\rm rid}}}_{C(\overline\B_R)}
\le A_N\mathcal B n_{\rm rid}^{-1/\kappa_N},
\label{eq:scalar-approx}
\end{align}
Here \(\kappa_N\) is defined in \eqref{eq:relu-approx-exponent}.
\end{lemma}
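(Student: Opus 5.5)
The plan is to reduce Lemma~\ref{lem:coefficient-approximation} to a known coefficient-controlled approximation theorem for the ReLU variation space, namely the Siegel and Yang--Zhou $L^\infty$ rates for functions in the convex hull of the ReLU dictionary. First rescale to the unit ball. Put $z=x/R$ and $\tilde u(z):=u(Rz)$ on $\overline\B_1$. By the definition \eqref{eq:variation-definition}, $\gamma_R(u)$ equals the product-dictionary variation of $\tilde u$ on $\overline\B_1$. Since $\mathcal B>\gamma_R(u)$, there is a signed measure $\lambda$ with $\norm{\lambda}_{\TV}<\mathcal B$ representing $\tilde u$. Convert $\lambda$ to the joint-sphere dictionary $(a^\top z+b)_+$, $(a,b)\in\mathbb S^N$, using the comparison proved just above; the variation cost there is at most $\sqrt2\,\norm{\lambda}_{\TV}$.

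Next apply the cited uniform-approximation theorem for the joint-sphere dictionary. It states that a function of variation at most $\mathcal B'$ is approximated on $\overline\B_1$ by an $n$-term combination $\sum c_j(a_j^\top z+b_j)_+$ with $\sum|c_j|\le\mathcal B'$ and sup-error at most $A'_N\mathcal B' n^{-1/2-3/(2N)}$. The exponent $1/2+3/(2N)=(N+3)/(2N)=1/\kappa_N$, which explains the definition \eqref{eq:relu-approx-exponent}.

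The approximant must then be mapped back to the product dictionary without increasing the $\ell^1$ coefficient sum. Apply the same atomwise decomposition to the finite atoms. Atoms with $|b|\le|a|$ become one product atom. Affine atoms with $b>|a|$ become two product atoms with nonnegative weights summing to $b\le 1$ times the coefficient. Zero atoms are dropped. This at most doubles the width. To keep exactly $n_{\rm rid}$ terms, apply the theorem with $\lfloor n_{\rm rid}/2\rfloor$ terms and absorb the factor $2^{1/\kappa_N}$ into $A_N$; the case $n_{\rm rid}=1$ is handled by enlarging $A_N$ and using $|u|\le\gamma_R(u)\cdot 2$ on the ball. The factor $\sqrt2$ in the budget conversion is awkward. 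I would handle it by applying the approximation theorem directly to a near-optimal product-dictionary representation when the cited result is available in that form. Otherwise I would state the bound with $\sqrt2\mathcal B$ and absorb $\sqrt2$ into $A_N$, keeping the coefficient sum $\le\mathcal B$ by rescaling with a budget $\mathcal B/\sqrt2$ strictly above the joint variation when possible.

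This last point is the main obstacle: the claimed bound $\sum|c_j|\le\mathcal B$ is sharp relative to the product variation. The safest route is an empirical-approximation (Maurey/Makovoz-type) argument directly on the product dictionary, with atoms $(v^\top z+\beta)_+$ indexed by the compact set $\mathbb S^{N-1}\times[-1,1]$. This is a smoothly parametrized family of the same type, so the stratified-sampling proof of the $n^{-1/2-3/(2N)}$ rate applies verbatim.

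The strength bounds in \eqref{eq:scalar-strength-control} are then immediate. Each atom $(v_j^\top x/R+\beta_j)_+$ is $1/R$-Lipschitz, so $\Lip(u_{n_{\rm rid}})\le\sum|c_j|/R\le\mathcal B/R$. Also $|\beta_j|\le1$ gives $\sqrt{R^{-2}+\beta_j^2}\le\sqrt{1+R^{-2}}$, so the weighted sum is bounded by $\sqrt{1+R^{-2}}\,\mathcal B$. Finally, the error bound \eqref{eq:scalar-approx} transfers from $\overline\B_1$ to $\overline\B_R$ unchanged, since the sup norm is invariant under the rescaling.
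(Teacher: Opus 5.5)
Your detour through the joint-sphere dictionary is where the argument breaks. The product-to-joint conversion can cost a factor $\sqrt2$, and the joint-to-product map you apply to the finite approximant is only non-expansive. So the best this route yields is $\sum_j|c_j|<\sqrt2\,\mathcal B$, and the same $\sqrt2$ loss then appears in $\Lip(u_{n_{\rm rid}})$ and in the third bound of \eqref{eq:scalar-strength-control}.

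Neither of your first two fallbacks repairs this. A constant can be absorbed into $A_N$ only in \eqref{eq:scalar-approx}; the three bounds in \eqref{eq:scalar-strength-control} carry no free constant. Running the joint-sphere theorem with budget $\mathcal B/\sqrt2$ would require the joint variation to lie below $\mathcal B/\sqrt2$, and $\gamma_R(u)<\mathcal B$ does not imply that. Your last fallback, re-deriving the rate on the product dictionary ``verbatim'' by stratified sampling, is not a routine step either. The sharp log-free uniform exponent $1/2+3/(2N)$ does not follow from a plain Maurey/Makovoz argument; it is precisely the content of the cited theorem.

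The repair is the option you mention only conditionally, and it is exactly the paper's proof. The paper applies Siegel's Theorem~3 (with $k=1$, derivative order zero, dimension $N$) directly in the product dictionary $(v^\top z+\beta_{\rm rid})_+$, $(v,\beta_{\rm rid})\in\mathbb S^{N-1}\times[-1,1]$, on the unit ball. That theorem returns approximants in $\Sigma^1_{n_{\rm rid}}$, i.e.\ with absolute coefficient sum at most one. Apply it to $u(R\,\cdot)/\mathcal B$, whose product variation is strictly less than one. Then multiply by $\mathcal B$ and pad with zero coefficients to obtain exactly $n_{\rm rid}$ terms. This also removes the width doubling, the $\lfloor n_{\rm rid}/2\rfloor$ bookkeeping, and the separate case $n_{\rm rid}=1$.

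In the paper the joint-sphere comparison is used only in the lossless direction, joint to product, and only for the Yang--Zhou embedding in Lemma~\ref{lem:smoothness-variation}. Your remaining steps coincide with the paper's: the $1/R$-Lipschitz bound per atom, $\sqrt{R^{-2}+\beta_j^2}\le\sqrt{1+R^{-2}}$, and invariance of the sup norm under $z=x/R$.
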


\begin{proof}[Proof of Lemma~\ref{lem:coefficient-approximation}]
Theorem~3 of Siegel~\cite{siegel2025}, specialized to \(k=1\), derivative order zero, and ambient dimension \(N\), gives the rate \(n_{\rm rid}^{-1/2-3/(2N)}=n_{\rm rid}^{-1/\kappa_N}\) on the unit ball and, in the notation of that result, chooses the approximant in \(\Sigma_{n_{\rm rid}}^1\), so that the sum of the absolute coefficients is at most one.  Apply it to \(u(R\,\cdot)/\mathcal B\), whose product-dictionary variation is strictly smaller than one.  Multiplication by \(\mathcal B\) and the change of variables \(z=x/R\) give \eqref{eq:scalar-approx} and the coefficient bound. Finally, every normalized atom is \(R^{-1}\)-Lipschitz in the physical variable, so \(\Lip(u_{n_{\rm rid}}) \le R^{-1}\sum_{j=1}^{n_{\rm rid}}|c_j|\le \mathcal B/R\). The augmented norm of its inner parameter \((v_j/R,\beta_j^{\rm rid})\) is at most \(\sqrt{R^{-2}+1}\), which proves \eqref{eq:scalar-strength-control}. Zero coefficients pad the representation to exactly \(n_{\rm rid}\) terms, while \(\mathcal B>\gamma_R(u)\) avoids assuming attainment of the variation infimum.  Continuity identifies the source \(L^\infty\) error with the uniform error on the closed ball, so the pullback gives the stated norm on \(\overline\B_R\).
\end{proof}

For an integer \(s\ge1\), define
\begin{equation}
\label{eq:scaled-extension-norm}
\norm{u}_{C_R^s,\mathrm{ext}}
:=\inf\left\{\norm{E}_{C^s(\R^N)}:
E(z)=u(Rz)\text{ for }|z|\le1\right\}.
\end{equation}

\begin{lemma}[Smoothness-to-variation embedding]
\label{lem:smoothness-variation}
If the integer \(s>(N+3)/2\), then there exists \(C_{\mathrm{emb},N,s}<\infty\), depending only on \(N,s\), such that
\begin{equation}
\label{eq:smoothness-variation-bound}
\gamma_R(u)\le C_{\mathrm{emb},N,s}
\norm{u}_{C_R^s,\mathrm{ext}}
\end{equation}
whenever the right-hand side is finite.
\end{lemma}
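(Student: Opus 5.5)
The plan is to reduce to the unit ball, localize the extension, and then invoke a known smoothness-to-variation embedding for the joint-sphere ReLU dictionary, transferring it to the product dictionary of \eqref{eq:variation-definition} by the comparison proved just before Lemma~\ref{lem:coefficient-approximation}.

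\emph{Step 1 (scaling).} The scaling \(z=x/R\) gives \(\gamma_R(u)=\gamma_1(u(R\,\cdot))\) directly from \eqref{eq:variation-definition}, since the atoms are \((v^\top z+\beta_{\rm rid})_+\) in the scaled variable. Likewise \(\norm{u}_{C_R^s,\mathrm{ext}}\) is the extension norm of \(u(R\,\cdot)\) from \(\overline\B_1\). So it suffices to treat \(R=1\).

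\emph{Step 2 (localization).} Take any extension \(E\in C^s(\R^N)\) with \(E=u(R\,\cdot)\) on \(\overline\B_1\) and \(\norm{E}_{C^s}\) within a factor \(2\) of the infimum in \eqref{eq:scaled-extension-norm}. Fix once and for all a cutoff \(\phi\in C_c^\infty(\B_2)\) with \(\phi\equiv1\) on \(\overline\B_1\), and set \(\widetilde E:=\phi E\). By the Leibniz rule,
\[
\norm{\widetilde E}_{C^s}\le C_{N,s}\norm{E}_{C^s}.
\]
Moreover \(\widetilde E\) is supported in \(\B_2\) and agrees with \(u(R\,\cdot)\) on the unit ball. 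Since \(\widetilde E\) has compact support, its \(C^s\) norm also controls every Sobolev norm \(H^{s'}\) with \(s'\le s\), with constants depending only on \(N\) and \(s\).

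\emph{Step 3 (embedding).} Apply the Radon-domain representation of compactly supported smooth functions by ReLU ridges, which is the embedding used in the cited work of Yang--Zhou and Siegel. One writes, on \(\overline\B_1\),
\[
\widetilde E(z)=\int_{\mathbb S^{N-1}\times\R}\rho(v,\beta)\,(v^\top z-\beta)_+\,\dd v\,\dd\beta+\text{affine}(z),
\]
where \(\rho\) is a dimension constant times the Radon transform of \((-\Delta)^{(N+1)/2}\widetilde E\). In odd \(N\) this is a local differential operator; in even \(N\) it is fractional and is handled via the Hilbert transform in \(\beta\). For \(|z|\le1\), only \(|\beta|\le1\) matters on the range where the atom is active, and atoms with \(\beta<-1\) are affine on the unit ball. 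That affine remainder, together with the explicit affine term, is represented with mass bounded by a constant times \(\norm{\widetilde E}_{C^1}\), using the identities recorded in the joint-sphere/product conversion. It then remains to bound \(\int_{|\beta|\le 2}\int|\rho|\) by \(C\norm{\widetilde E}_{C^s}\). I expect to get this from Cauchy--Schwarz in \(\beta\) together with the Fourier slice theorem, which gives
\[
\norm{\rho}_{L^2}\lesssim \norm{\widetilde E}_{H^{(N+1)/2+1-1/2\cdot0}}\quad\text{(up to the half-derivative from the slice weight)},
\]
so that roughly \(s>(N+3)/2\) derivatives are used.

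\emph{Step 4 (dictionary transfer).} The joint-sphere variation obtained in Step 3 dominates the product variation \(\gamma_1\), by the comparison shown above. This yields \eqref{eq:smoothness-variation-bound}.

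\emph{Main obstacle.} The delicate point is Step 3 at the sharp threshold \(s>(N+3)/2\). A naive Barron spectral bound \(\int(1+|\xi|)^2|\widehat{\widetilde E}|\,\dd\xi\) only gives \(s>N/2+2=(N+4)/2\), half a derivative too many. The sharper count requires the \(L^2\) estimate on the Radon side over a bounded \(\beta\)-range. That estimate gains the half derivative from the \(|\xi|^{(N-1)/2}\) weight in the slice theorem, and it needs care in even dimensions where the operator is nonlocal and \(\rho\) is not compactly supported in \(\beta\). If this becomes too technical, I would invoke the stated theorem of Yang--Zhou~\cite{yangzhou2025} directly for compactly supported \(C^s\) (or \(H^s\)) functions on \(\B_2\) and use only Steps 1, 2 and 4 here.
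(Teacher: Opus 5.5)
Your Steps 1, 2 and 4 match the paper's framing, and your fallback is exactly the paper's proof. The paper applies Theorem~2.1 of \cite{yangzhou2025} (with $k=1$, $d=N$) directly to a global extension of $z\mapsto u(Rz)$, with no cutoff. It observes that at integer index $s$ their H\"older norm is a $C^{s-1,1}$ norm, which the mean-value theorem bounds by the $C^s$ norm, and then passes from the joint-sphere dictionary to the product dictionary.

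Your primary route instead reproves the embedding, and the key computation is right. With $\rho=c_N R[(-\Delta)^{(N+1)/2}\widetilde E]$, the Fourier slice theorem gives $\norm{\rho}_{L^2}^2\simeq\int|\xi|^{N+3}|\widehat{\widetilde E}(\xi)|^2\dd\xi$. Cauchy--Schwarz on a bounded $\beta$-range therefore needs only $H^{(N+3)/2}$. Your cutoff $\widetilde E\in C^s_c(\B_2)$ supplies this, even at the endpoint $s=(N+3)/2$ for odd $N$. On this route the atoms with $|\beta|\le1$ are already product-dictionary atoms, so Step 4 is superfluous.

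The price is technical work that the citation hides:
\begin{enumerate}
\item Since $s$ may be below $N+1$, $(-\Delta)^{(N+1)/2}\widetilde E$ is only a distribution. The inversion formula must be proved for mollifications of $\widetilde E$ and then passed to the limit using the uniform $L^2$ bound.
\item In even dimensions you need the tail decay $|\rho(v,\beta)|\lesssim|\beta|^{-(N+2)}\norm{\widetilde E}_{L^1}$ for $|\beta|\ge3$, obtained by integrating by parts $N+1$ times against the Hilbert kernel. This makes the $\beta<-1$ atoms sum to a finite affine function.
\item That affine remainder is bounded by $\norm{\widetilde E}_{C^1}$ plus the main mass $\int_{|\beta|\le1}|\rho|$, not by $\norm{\widetilde E}_{C^1}$ alone as you state.
\end{enumerate}
None of these is an obstruction, but they are the work that the paper's one-line citation avoids.
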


\begin{proof}[Proof of Lemma~\ref{lem:smoothness-variation}]
Theorem~2.1 of Yang--Zhou~\cite{yangzhou2025}, with \(k=1\), \(d=N\), and smoothness index \(s\), applies because \(s>(N+3)/2\).  In the notation of that theorem it bounds the joint-sphere variation of \(z\mapsto u(Rz)\) by a constant depending only on \(N,s\) times its global-extension H\"older norm.  At the integer index \(s\), their convention writes \(s=(s-1)+1\), so this is a \(C^{s-1,1}\) norm.  It is controlled by the \(C^s\) norm in \eqref{eq:scaled-extension-norm}, by the multivariate mean-value theorem.  The joint-to-product conversion established above does not increase total variation.  Taking the infimum over extensions proves \eqref{eq:smoothness-variation-bound}.  Because both sides are continuous, the almost-everywhere identity in the cited function-space result holds pointwise on \(\overline\B_1\).
\end{proof}

\begin{proof}[Proof of Lemma~\ref{lem:smooth-vector-approximation}]
If \(u=0\), take the empty network.  Suppose \(u\ne0\), and write \(u=(u^{(1)},\ldots,u^{(N)})\).  For each coordinate, the global function \(z\mapsto u^{(k)}(Rz)\) is an admissible extension in \eqref{eq:scaled-extension-norm}, with \(C^s\)-norm at most \((1+R)^s\norm u_{C^s}\). Lemma~\ref{lem:smoothness-variation} therefore gives \(\gamma_R(u^{(k)}) \le C_{\mathrm{emb},N,s}(1+R)^s\norm u_{C^s} =\tfrac12\mathcal B_{R,s}(u)\). Thus \(\mathcal B_{R,s}(u)\) is a strict variation budget for every nonzero coordinate.  The definition of \(n_{\rm app}(R,s;u,\varepsilon_{\rm app})\) gives \(A_N\mathcal B_{R,s}(u) n_{\rm app}(R,s;u,\varepsilon_{\rm app})^{-1/\kappa_N}\le \varepsilon_{\rm app}/\sqrt N\). Apply Lemma~\ref{lem:coefficient-approximation} to each nonzero scalar coordinate with this common budget and term count; use the zero scalar network for zero coordinates.  Concatenating the coordinate networks gives \(V_{\rm app}\).  The Euclidean error is at most \(\sqrt{N(\varepsilon_{\rm app}/\sqrt N)^2}=\varepsilon_{\rm app}\), and the total number of units is at most \(Nn_{\rm app}(R,s;u,\varepsilon_{\rm app})=\mathcal P_{R,s}(u,\varepsilon_{\rm app})\).  Finally, \eqref{eq:scalar-strength-control} gives \(\Str(V_{\rm app}) \le N\sqrt{1+R^{-2}}\,\mathcal B_{R,s}(u) =\mathcal S_{R,s}(u)\), which proves all three assertions.
\end{proof}

\section{Quantitative estimates for the endpoint-correction bridge}
\label{app:stability-degree}

This appendix proves Proposition~\ref{prop:quantitative-interface} by quantifying the approximation region and tolerances in terms of the normalized data geometry and \(\mathfrak B(\mathbf g)\); the degree step is invoked from Section~\ref{sec:exactification}.

For \(\tau,L\ge0\), write
\begin{equation}
\mathfrak g_\tau(L):=\int_0^\tau e^{L(\tau-t)}\,\dd t
=
\begin{cases}
(e^{L\tau}-1)/L,&L>0,\\
\tau,&L=0.
\end{cases}
\end{equation}
Also set \(L_{\rm std}:=\norm{Df_0}_{C^0}\).  All constants involving only \(f_0\), its fixed cutoff template, and the dimension are treated as standard-model constants.

\subsection{Endpoint-correction conditioning}

Let \(f,h_1,\ldots,h_m\) be the reference objects from Propositions~\ref{prop:smooth-reference-module} and~\ref{prop:endpoint-frame-module}. Along the marked standard trajectories, the cutoff is constant and the linearized moving dynamics are rotations; the stationary components have identity fundamental matrices. Thus the standard fundamental matrices have operator norm one. The conjugacy formula \eqref{eq:fundamental-conjugacy} gives
\begin{equation}
M_U:=\max_{1\le i\le M}\int_0^1\norm{U_i(1,t)}_{\op}\,\dd t
\le\norm{DH}_\infty\norm{DH^{-1}}_\infty.
\end{equation}
Choose
\begin{equation}
\label{eq:sensitivity-tolerance}
\varepsilon_*:=
\frac{1}{8\sqrt{mM}\max\{1,M_U\}}.
\end{equation}
Suppose that \(g_1,\ldots,g_m\) satisfy the first condition in \eqref{eq:bridge-hypotheses}.  Since every reference trajectory lies in \(\B_{R_{\rm tr}}\), the definition of \(\mathcal L_f\) gives, column by column,
\[
\norm{\mathcal L_f(g_\ell)-\mathcal L_f(h_\ell)}_{\R^m}
\le \sqrt M\,M_U\varepsilon_*
\le\frac{1}{8\sqrt m}.
\]
Hence, for
\[
\mathsf J_{\rm end}:=[\mathcal L_f(g_1)\ \cdots\ \mathcal L_f(g_m)],
\]
the Frobenius bound and \eqref{eq:ideal-sensitivity-identity} imply
\[
\norm{\mathsf J_{\rm end}-I_m}_{\op}\le\frac18.
\]
Consequently we may fix
\begin{equation}
\sigma_*:=\frac34,
\qquad
|\mathsf J_{\rm end}\alpha|\ge\sigma_*|\alpha|
\quad(\alpha\in\R^m).
\end{equation}

\subsection{Correction-family normalization and quadratic endpoint expansion}

Write
\[
a(z):=(DH(z))^{-1},
\qquad
H_1:=\norm{DH}_\infty,
\qquad
H_{-1}:=\norm a_\infty,
\qquad
H_2:=\norm{D^2H}_\infty.
\]
For \(\widetilde g_\ell(z):=a(z)g_\ell(H(z))\), put \(\widetilde G_\alpha:=\sum_{\ell=1}^m\alpha_\ell\widetilde g_\ell\).  Since \(H-\Id\) is supported in \(\B_{R_{\rm tr}}\), the map \(H\) sends that ball onto itself.  Moreover,
\[
D a(z)[v]
=-a(z)D^2H(z)[v]a(z).
\]
The Lipschitz estimate is global even though the fields \(g_\ell\) need not be bounded on \(\R^N\). Inside \(\B_{R_{\rm tr}}\), the identity \(H(\B_{R_{\rm tr}})=\B_{R_{\rm tr}}\) and linear growth bound \(g_\ell(H(z))\). Outside that ball, \(H\) and \(a\) are the identity and \(Da=0\). Thus the term involving \(Da\) occurs only where the correction field is bounded. Product and chain rules, interpreted almost everywhere for Lipschitz correction fields, give the global Lipschitz estimate.
If \(\mathfrak B(\mathbf g)\le\mathfrak b\), then the linear-growth estimate
\(|g_\ell(x)|\le |g_\ell(0)|+\Lip(g_\ell)|x|\), Cauchy--Schwarz, and the preceding product rule give the aggregate bounds
\begin{equation}
\label{eq:pullback-g-envelope}
\begin{aligned}
A_{\mathfrak b}
&:=(1+R_{\rm tr})H_{-1}\mathfrak b,\\
L_{\mathfrak b}
&:=\Bigl[(1+R_{\rm tr})H_{-1}^2H_2+H_{-1}H_1\Bigr]\mathfrak b,
\end{aligned}
\end{equation}
so that
\[
\left(\sum_{\ell=1}^m|\widetilde g_\ell(0)|^2\right)^{1/2}\le A_{\mathfrak b},
\qquad
\left(\sum_{\ell=1}^m\Lip(\widetilde g_\ell)^2\right)^{1/2}\le L_{\mathfrak b}.
\]
Choose
\begin{equation}
\label{eq:r0}
r_0:=\frac{1}{4(1+A_{\mathfrak b}+L_{\mathfrak b})}.
\end{equation}
For \(|\alpha|\le r_0\), Cauchy--Schwarz gives
\begin{equation}
\label{eq:pullback-smallness}
|\widetilde G_\alpha(0)|\le\frac14,
\qquad
\Lip(\widetilde G_\alpha)\le\frac14.
\end{equation}
The pullback of \(f+G_\alpha\), where \(G_\alpha:=\sum_\ell\alpha_\ell g_\ell\), is exactly \(f_0+\widetilde G_\alpha\).

For \(|\alpha|\le r_0\), let
\[
Z_i^\alpha(t):=H^{-1}(\Phi_{f+G_\alpha}^t(x_i)),
\qquad
\gamma_i^0(t):=\Phi_{f_0}^t(x_i^0).
\]
Thus
\begin{equation}
\label{eq:standard-parameter-ode}
\dot Z_i^\alpha
=f_0(Z_i^\alpha)+\widetilde G_\alpha(Z_i^\alpha),
\qquad
Z_i^\alpha(0)=x_i^0.
\end{equation}
The linear-growth estimate from \eqref{eq:pullback-smallness} and Gronwall's inequality show that all these trajectories remain in \(\overline\B_{R_{\rm par}}\), where
\[
R_{\rm par}:=e^{1/4}R_{\rm vert}
+(\norm{f_0}_{C^0}+1/4)\mathfrak g_1(1/4).
\]
Define
\[
E_{\rm disp}
:=(A_{\mathfrak b}+R_{\rm par}L_{\mathfrak b})
\mathfrak g_1(L_{\rm std}+1/4),
\]
\[
E_{\rm rem}
:=\mathfrak g_1(L_{\rm std})
\left(
\frac12\norm{D^2f_0}_{C^0}E_{\rm disp}^2
+L_{\mathfrak b}E_{\rm disp}
\right).
\]
Set \(D_i^\alpha:=Z_i^\alpha-\gamma_i^0\).  Subtracting the integral equations gives
\[
\norm{D_i^\alpha}_{C([0,1])}\le E_{\rm disp}|\alpha|.
\]
Let \(z_i^\alpha\) solve
\begin{equation}
\dot z_i^\alpha
=Df_0(\gamma_i^0)z_i^\alpha
+\widetilde G_\alpha(\gamma_i^0),
\qquad
z_i^\alpha(0)=0,
\end{equation}
and set \(R_i^\alpha:=D_i^\alpha-z_i^\alpha\).  Taylor's formula and the Lipschitz bound on \(\widetilde G_\alpha\) give
\begin{equation}
\norm{R_i^\alpha}_{C([0,1])}
\le E_{\rm rem}|\alpha|^2.
\end{equation}
At \(t=1\), let \(\widetilde{\mathsf J}_{\rm end}\alpha\) denote the stack of the vectors \(z_i^\alpha(1)\).  The fundamental-matrix conjugacy yields
\begin{equation}
\mathsf J_{\rm end}
=\operatorname{diag}\bigl(DH(y_1^0),\ldots,DH(y_M^0)\bigr)
\widetilde{\mathsf J}_{\rm end}.
\end{equation}
Since \(\mathcal Q(\alpha):=\End(f+G_\alpha)\) is obtained by applying \(H\) to the stacked standard endpoints, Taylor's formula gives
\begin{equation}
\norm{\mathcal Q(\alpha)-Y-\mathsf J_{\rm end}\alpha}_{\R^m}
\le E_{\rm quad}|\alpha|^2,
\qquad |\alpha|\le r_0,
\end{equation}
where
\[
E_{\rm quad}
:=\sqrt M\left(
H_1E_{\rm rem}+\frac12H_2E_{\rm disp}^2
\right).
\]
Set
\begin{equation}
\label{eq:correction-radius}
r_*:=
\begin{cases}
r_0/2,&E_{\rm quad}=0,\\[1mm]
\displaystyle
\min\left\{r_0/2,\frac{\sigma_*}{8E_{\rm quad}}\right\},
&E_{\rm quad}>0.
\end{cases}
\end{equation}
Then \(r_*\le1/8\), and
\begin{equation}
\label{eq:linear-dominance}
\norm{\mathcal Q(\alpha)-Y-\mathsf J_{\rm end}\alpha}_{\R^m}
\le\frac{\sigma_*}{8}|\alpha|,
\qquad
|\alpha|\le r_*.
\end{equation}

\subsection{Active approximation radius and endpoint stability}

Define
\begin{equation}
\label{eq:final-ball}
R_{\rm safe}:=R_{\rm par}+2,
\qquad
R_*:=1+\max\{R_{\rm tr},R_{\rm safe}\}.
\end{equation}
Since \(H(\B_{R_{\rm tr}})=\B_{R_{\rm tr}}\) and \(H\) is the identity outside that ball, \(H(\overline\B_{R_{\rm safe}})\subset\B_{R_*}\).  Every trajectory of \(f_0+\widetilde G_\alpha\), \(|\alpha|\le r_*\), lies in \(\overline\B_{R_{\rm par}}\) and hence has distance at least two from \(\partial\B_{R_{\rm safe}}\).

Fix \(0<\eta\le1\), and suppose a locally Lipschitz field \(V_{\rm base}\) satisfies
\[
\norm{V_{\rm base}-f}_{C(\overline\B_{R_*})}\le\eta.
\]
For \(|\alpha|\le r_*\), define
\[
\widehat Z_i^\alpha(t)
:=H^{-1}(\Phi_{V_{\rm base}+G_\alpha}^t(x_i)).
\]
As long as \(\widehat Z_i^\alpha\) remains in \(\B_{R_{\rm safe}}\), its standard-coordinate equation differs from \eqref{eq:standard-parameter-ode} by at most \(H_{-1}\eta\).  Put
\begin{equation}
L_{\rm stab}:=L_{\rm std}+r_*L_{\mathfrak b},
\qquad
E_{\rm exit}:=H_{-1}\mathfrak g_1(L_{\rm stab}),
\qquad
E_{\rm end}:=\sqrt M\,H_1E_{\rm exit}.
\end{equation}
Because \(r_*\le r_0/2\), definition \eqref{eq:r0} gives \(r_*L_{\mathfrak b}\le1/8\), and hence \(L_{\rm stab}\le L_{\rm std}+1/8\).  Scalar Gronwall, stopped at the first exit from \(\B_{R_{\rm safe}}\), yields
\begin{equation}
\max_{1\le i\le M}
\norm{\widehat Z_i^\alpha-Z_i^\alpha}_{C([0,1])}
\le E_{\rm exit}\eta,
\qquad
\norm{\End(V_{\rm base}+G_\alpha)-\mathcal Q(\alpha)}_{\R^m}
\le E_{\rm end}\eta,
\end{equation}
provided \(E_{\rm exit}\eta\le1\).  The first inequality rules out a first exit because the reference path stays at distance two from the boundary; the second follows after applying \(H\) at the endpoints and stacking the \(M\) blocks.

Choose
\begin{equation}
\label{eq:eta-star}
\eta_*:=\frac12\min\left\{
1,\frac1{E_{\rm exit}},
\frac{\sigma_*r_*}{8E_{\rm end}}
\right\}.
\end{equation}
For every \(V_{\rm base}\) satisfying the bridge hypothesis and every \(|\alpha|\le r_*\), the stopped-trajectory argument gives
\begin{equation}
\label{eq:endpoint-stability-final}
\norm{\End(V_{\rm base}+G_\alpha)-\mathcal Q(\alpha)}_{\R^m}
\le\frac{\sigma_*r_*}{16}.
\end{equation}
The same estimates imply continuous dependence of the endpoint map on \(\alpha\) on \(\overline\B_{r_*}^m\).  In particular, all sample trajectories in both homotopies remain in \(\B_{R_*}\), which proves the active-radius assertion in Proposition~\ref{prop:quantitative-interface}.

\subsection{Degree closure and quantitative dependence}

On \(|\alpha|=r_*\), equations \eqref{eq:linear-dominance} and \eqref{eq:endpoint-stability-final} imply
\[
\norm{\End(V_{\rm base}+G_\alpha)-Y-\mathsf J_{\rm end}\alpha}_{\R^m}
\le\frac{3\sigma_*r_*}{16}
<|\mathsf J_{\rm end}\alpha|.
\]
Lemma~\ref{lem:degree-exactification} therefore gives \(\alpha_*\in\B_{r_*}^m\) satisfying \eqref{eq:bridge-exactness}.

For the bridge envelope, Appendix~\ref{app:finite-transitivity} bounds
\[
R_{\rm tr},\ \mathfrak H,\ \norm f_{C^{s_N}},\
\max_\ell\norm{h_\ell}_{C^{s_N}},\ M_U
\]
by \((C_NK/\rho)^{C_NK}\); hence \eqref{eq:sensitivity-tolerance} has the same bound for \(\varepsilon_*^{-1}\), while \eqref{eq:final-ball} gives the geometry-only bound for \(R_*\).  Moreover, \eqref{eq:pullback-g-envelope} gives
\[
A_{\mathfrak b}+L_{\mathfrak b}
\le
\left(C_N\frac K\rho\right)^{C_NK}\mathfrak b.
\]
The definitions of \(E_{\rm disp}\), \(E_{\rm rem}\), and \(E_{\rm quad}\) then show, after another enlargement of the same dimension-only constant, that
\[
E_{\rm disp}\le\Gamma_N(K,\rho)(1+\mathfrak b),
\qquad
E_{\rm rem}+E_{\rm quad}
\le\Gamma_N(K,\rho)(1+\mathfrak b)^2.
\]
Therefore \eqref{eq:r0} and \eqref{eq:correction-radius} give
\[
r_*^{-1}\le\Gamma_N(K,\rho)(1+\mathfrak b)^2.
\]
Since \(r_*L_{\mathfrak b}\le1/8\), the Gronwall factor is independent of \(\mathfrak b\), so \(E_{\rm exit}+E_{\rm end}\le\Gamma_N(K,\rho)\); thus \eqref{eq:eta-star} gives
\[
\eta_*^{-1}\le\Gamma_N(K,\rho)(1+\mathfrak b)^2.
\]
After one final enlargement of \(C_N\), these estimates are precisely \eqref{eq:bridge-geometry-outputs} and \eqref{eq:bridge-budget-envelope}.  This completes the proof of Proposition~\ref{prop:quantitative-interface}.

\section{Closure of quantitative bounds and coordinate pullback}
\label{app:quantitative-closure}

This appendix closes the bridge and ReLU approximation budgets under the envelope of Theorem~\ref{thm:quantitative-exactification}.

\paragraph{Closure of the quantitative envelope.}

\begin{lemma}[Closure of the coarse envelope]
\label{lem:coarse-envelope-closure}
Fix \(N\), let \(K\ge2\), \(0<\rho\le1\), and put \(X:=K/\rho\). Call a nonnegative quantity \(q\) coarsely admissible if \(q\le(c_NX)^{c_NK}\) for some constant \(c_N\ge2\) depending only on \(N\).  A finite collection of coarsely admissible quantities remains coarsely admissible, with one common dimension-only constant, after any fixed finite sequence of the following operations:
\begin{enumerate}[label=(\roman*)]
\item multiplication by a dimension-only constant, finite sums, finite products, and fixed positive powers;
\item multiplication by fixed powers of \(K\), \(M\), or \(m=NM\);
\item taking the reciprocal of a positive quantity defined as a dimension-only multiple of a finite minimum, provided the reciprocals of the entries of that minimum are already coarsely admissible;
\item applying \(q\mapsto\lceil q^{\kappa_N}\rceil\).
\end{enumerate}
The same conclusion holds for a sum or Euclidean aggregate indexed by \(1\le\ell\le m\), provided the summands are bounded by one common coarsely admissible quantity.
\end{lemma}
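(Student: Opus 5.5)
We verify each operation separately on quantities bounded by $(c_jX)^{c_jK}$ with $c_j\ge2$. Since $X=K/\rho\ge K\ge2$ and $K\ge2$, any two such bounds are dominated by the one with the larger constant. So we may first replace all constants in the finite collection by $c:=\max_j c_j$, and every input then satisfies $q\le(cX)^{cK}$. The finite sequence of operations can be handled by induction on its length: it suffices to show that each single operation produces a bound $(c'X)^{c'K}$ with $c'$ depending only on $c$, $N$, and the operation. Because the sequence is fixed, finitely many enlargements give one common constant.

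For (i), note that $(cX)^{cK}\ge 2^{2K}\ge4$. Hence a dimension-only constant $A$ obeys $A\le (cX)^{cK\log_4 A}$. A sum of $n$ terms is at most $n\max$, and a product of $n$ terms is at most $(cX)^{ncK}$. A fixed power $q^\beta$ is at most $(cX)^{\lceil\beta\rceil cK}$. Each of these has the form $(c'X)^{c'K}$ once we use $(cX)^{a cK}\le(acX)^{acK}$ for $a\ge1$.

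For (ii), $K\le X$, so $K^\beta\le X^{\beta}\le X^{\beta K}$. For $M$, the key observation is $M\le K$: each constrained datum contributes a distinct input $x_i\in\cV$. Hence $m=NM\le NK\le NX$, which reduces (ii) to (i).

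For (iv), use $\lceil q^{\kappa_N}\rceil\le q^{\kappa_N}+1$ with $\kappa_N<2$, and then apply (i).

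Operation (iii) is the only one needing care. Let $q=A\min\{q_1,\dots,q_n\}$ with $A$ dimension-only. Then
\[
q^{-1}=A^{-1}\max_j q_j^{-1}\le A^{-1}\sum_j q_j^{-1},
\]
and each $q_j^{-1}$ is coarsely admissible by hypothesis. So $q^{-1}$ is admissible by (i), where the factor $A^{-1}$ is handled like any constant. The point to watch, and the expected obstacle, is that the bound must not silently depend on a number of entries that grows with the data. In the uses in the text, for example $\varepsilon_*$, $r_*$, and $\eta_*$, the minima have a fixed number of entries, while $\sqrt{mM}$ enters only multiplicatively and is covered by (ii).

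Finally, consider indexed sums or Euclidean aggregates over $1\le\ell\le m$ whose summands are bounded by a common admissible $Q$. These are at most $mQ$, or $\sqrt m\,Q$ for the Euclidean aggregate, and both are admissible by (ii) and (i). The growing number of terms is harmless here exactly because $m\le NX$ is absorbed into the base of the exponential.
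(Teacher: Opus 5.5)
Your proposal is correct and follows essentially the same route as the paper. Both arguments use $M\le K$ and $X\ge K\ge2$ to absorb fixed powers of $K,M,m$, bound the reciprocal of a minimum by the sum of the reciprocals, use $\lceil q^{\kappa_N}\rceil\le 1+q^{\kappa_N}$, bound indexed aggregates by $mQ$ and $\sqrt m\,Q$, and take the largest constant over the finitely many operations. You also make explicit that the minima in (iii) must have a fixed number of entries, a point the paper leaves implicit.
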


\begin{proof}
Since \(M\le K\), one has \(m=NM\le NK\), while \(X=K/\rho\ge K\ge2\). Consequently every fixed power of \(K\), \(M\), or \(m\) is absorbed by \((C_NX)^{C_NK}\) after increasing \(C_N\).  A fixed finite sum contributes only a fixed multiplicative factor, and a fixed product or positive power only multiplies the exponent by a fixed amount.  These changes are again absorbed by increasing \(C_N\).  Moreover,
\[
\left(\min_{1\le j\le J}q_j\right)^{-1}
=\max_{1\le j\le J}q_j^{-1}
\le\sum_{j=1}^Jq_j^{-1},
\qquad
\lceil q^{\kappa_N}\rceil\le1+q^{\kappa_N}.
\]
Finally, if \(0\le q_\ell\le q_{\max}\) for \(1\le\ell\le m\), then
\[
\sum_{\ell=1}^m q_\ell\le m q_{\max},
\qquad
\left(\sum_{\ell=1}^m q_\ell^2\right)^{1/2}
\le\sqrt m\,q_{\max}.
\]
Thus every listed operation preserves the claimed form.  Because only finitely many operations occur, the largest of the resulting dimension-only constants works simultaneously for all of them.
\end{proof}

Appendix~\ref{app:finite-transitivity} makes \(R_{\rm tr}\), \(\norm f_{C^{s_N}}\), and \(\max_\ell\norm{h_\ell}_{C^{s_N}}\) coarsely admissible.  By \eqref{eq:explicit-vector-budgets}--\eqref{eq:explicit-vector-width-strength},
\[
\mathcal S_h
\le\sqrt m\max_\ell\mathcal S_{R_{\rm tr},s_N}(h_\ell)
\]
is therefore coarsely admissible, and so is \(\mathfrak b_h=\max\{1,\sqrt2\mathcal S_h\}\).

The bridge geometry estimate \eqref{eq:bridge-geometry-outputs} makes \(R_*\) and \(\varepsilon_*^{-1}\) coarsely admissible.  Since the budget enters \eqref{eq:bridge-budget-envelope} only through the fixed square \((1+\mathfrak b_h)^2\), Lemma~\ref{lem:coarse-envelope-closure} also makes \(r_*^{-1}\) and \(\eta_*^{-1}\) coarsely admissible.  Applying the ReLU approximation formulas at \((R_{\rm tr},h_\ell,\varepsilon_*)\) and \((R_*,f,\eta_*)\) then gives the same conclusion for \(\mathcal P_h\), \(\mathcal P_f\), and \(\mathcal S_f\).  Since \(r_*\le1/8\), both final quantities \(\mathcal P_f+\mathcal P_h\) and \(\mathcal S_f+r_*\mathcal S_h\) are coarsely admissible.  Thus, after enlarging one dimension-only constant,
\begin{equation}
\label{eq:exact-ledger-coarse-envelope}
\max\left\{
R_{\rm tr},R_*,\norm f_{C^{s_N}},\max_\ell\norm{h_\ell}_{C^{s_N}},
\varepsilon_*^{-1},r_*^{-1},\eta_*^{-1},
\mathcal P_f+\mathcal P_h,\mathcal S_f+r_*\mathcal S_h
\right\}
\le\left(C_N\frac K\rho\right)^{C_NK}.
\end{equation}
Equations~\eqref{eq:exact-normalized-budgets} and~\eqref{eq:exact-ledger-coarse-envelope} prove \eqref{eq:normalized-joint}.

\subsection{Proof of the coordinate bound}

\begin{proof}[Proof of Corollary~\ref{cor:coordinate-bound}]
If all samples are stationary, the empty parameter list proves the claim.  Otherwise \(K\ge2\).  Put \(R_{\rm data}:=1+\max_{z\in\cV}\abs z\) and \(S_{\rm coord}(x):=x/(2R_{\rm data})\). The normalized vertices lie in \(\B_{1/2}\), and their separation is \(\sep(S_{\rm coord}\cV)=\sep(\cV)/(2R_{\rm data})\), so \(\sep(S_{\rm coord}\cV)^{-1}\le2\Gamma_{\mathrm{coord}}(\cV)\). Apply the normalized construction summarized in \eqref{eq:normalized-joint} to the data \(S_{\rm coord}\cD\), and pull its parameter list back by \(S_{\rm coord}\) and by time \(T\), exactly as in \eqref{eq:pullback-parameters}.  If \(\widehat\theta\) is the normalized list, this scalar pullback satisfies
\begin{equation}
\label{eq:raw-strength-pullback}
T\Str(\theta)
\le2R_{\rm data}\Str(\widehat\theta).
\end{equation}
Indeed, each pulled-back ridge contributes
\[
\abs{\widehat w_j}
\sqrt{\abs{\widehat a_j}^2+(2R_{\rm data})^2\widehat b_j^2}
\le
2R_{\rm data}\abs{\widehat w_j}
\sqrt{\abs{\widehat a_j}^2+\widehat b_j^2}
\]
after multiplication by \(T\).

Let \(C_{0,N}\) denote a dimension-only constant for which the normalized estimate \eqref{eq:normalized-joint} holds. Since \(K\le2M\) and \(\sep(S_{\rm coord}\cV)^{-1}\le2\Gamma_{\mathrm{coord}}(\cV)\), that estimate gives
\[
p(\theta),\ \Str(\widehat\theta)
\le
\bigl(4C_{0,N}M\Gamma_{\mathrm{coord}}(\cV)\bigr)^{2C_{0,N}M}.
\]
Choose the final constant \(C_N\) so that \(C_N\ge4C_{0,N}\) and \(C_N\ge2C_{0,N}+1\). Since \(M\Gamma_{\mathrm{coord}}(\cV)\ge1\), this implies
\[
\bigl(4C_{0,N}M\Gamma_{\mathrm{coord}}(\cV)\bigr)^{2C_{0,N}M}
\le
\bigl(C_NM\Gamma_{\mathrm{coord}}(\cV)\bigr)^{C_NM-1}.
\]
Finally,
\(2R_{\rm data}\le2\Gamma_{\mathrm{coord}}(\cV)
\le C_NM\Gamma_{\mathrm{coord}}(\cV)\).
Combining this with \eqref{eq:raw-strength-pullback} proves \eqref{eq:main-coordinate}.  Balancing now gives \(\norm{W_\theta}_{\mathrm F}^2+ \norm{A_\theta}_{\mathrm F}^2+ \norm{b_\theta}_2^2=2\Str(\theta)\), which proves \eqref{eq:coordinate-parameter-bound}.
\end{proof}

\bibliographystyle{siam}
\bibliography{ANODE_v20_references}
\end{document}